\definecolor{darkgreen}{rgb}{0,0.6,0}
\definecolor{darkred}{rgb}{0.7,0,0}
\definecolor{darkblue}{rgb}{0,.2,.7}
\newcommandx{\todomarce}[2][1=]{\todo[linecolor=magenta,backgroundcolor=magenta!25,bordercolor=magenta,#1]{#2}}
\newcommand{\ud}{\mathrm{d}}
\newcommand{\M}{\mathcal{M}}
\renewcommand{\epsilon}{\varepsilon}
\let\theta\vartheta
\let\phi\varphi
\DeclareMathAlphabet{\doba}{U}{msb}{m}{n}
\DeclareMathOperator{\diam}{diam}
\DeclareMathOperator*{\esssup}{esssup}
\def\d{{\text d}}
\newcommand{\onb}[2]{W_{#1}^{#2}}
\newtheorem{lem}{Lemma}
\newtheorem{cor}[lem]{Corollary}
\newtheorem{thm}[lem]{Theorem}
\newtheorem{prop}[lem]{Proposition}
\newtheorem{main}{Main Result}
\theoremstyle{definition}
\newtheorem{rem}[lem]{Remark}
\newcommand{\beq}{\begin{equation}}
\newcommand{\eeq}{\end{equation}}
\newcommand{\N}{\mathbb{N}}
\newcommand{\real}{\mathbb{R}}
\newcommand{\Z}{\mathbb{Z}}
\newcommand{\R}{\mathbb{R}}
\newcommand{\TT}{\mathbb{T}}
\newcommand{\V}{\mathbb{V}}
\newcommand{\rn}{\mathbb{R}^n}
\newcommand{\nat}{\mathbb{N}}
\newcommand{\calT}{\mathcal{T}}
\newcommand{\calM}{\mathcal{M}}
\newcommand{\calP}{\mathcal{P}}
\begin{document}

\title[Approximation classes for time-stepping AFEM]{Approximation classes for adaptive time-stepping finite element methods}
\author[M. Actis, P. Morin, and C. Schneider]{Marcelo Actis$^\dagger$, Pedro Morin$^\dagger$, and Cornelia Schneider$^{\ast}$}
\thanks{$^{\dagger}$Partially supported by Agencia Nacional de Promoci\'on Científica y Tecnol\'ogica, through grants PICT-2014-2522, PICT-2016-1983, by CONICET through PIP 2015 11220150100661, and by Universidad Nacional del Litoral through grants CAI+D 2016-50420150100022LI. A research stay of Pedro Morin at Universität Erlangen was partially supported by the Simons Foundation and by the Mathematisches Forschungsinstitut Oberwolfach}
\thanks{$^{\ast}${\em (Corresponding author)} Supported by Deutsche Forschungsgemeinschaft (DFG), grant SCHN 1509/1-2.}
\date{\textcolor{black}{\today}}
\subjclass[2010]{Primary 41A25, 65D05; Secondary 65N30, 65N50}
\keywords{}

\maketitle

\begin{abstract}
{We study approximation classes for adaptive time-stepping finite element methods for time-dependent Partial Differential Equations (PDE). We measure the approximation error in $L_2([0,T)\times\Omega)$ and consider the approximation with discontinuous finite elements in time and continuous finite elements in space, of any degree.
As a byproduct we define Besov spaces for vector-valued functions on an interval and derive some embeddings, as well as Jackson- and Whitney-type estimates.
}
\end{abstract}

\tableofcontents

\section{Introduction and main result}

Adaptive time-stepping finite element methods (AFEM) for evolutionary PDE usually lead to a sequence of timesteps and meshes, which yield a partition of the time interval $0=t_0 < t_1 < \dots < t_N = T$ and one triangulation $\calT_i$ for each time interval $[t_{i-1},t_i)$. The complexity of the discrete solution is thus related to the total number of degrees of freedom needed to represent it on the whole interval, which in turn is equivalent to $\sum_{i=1}^N \#\calT_i$.

In this article we study spaces of functions which can be approximated using such time-space partitions with an error of order $\left( \sum_{i=1}^N \#\calT_i\right)^{-s}$ for different $s > 0$. The results that we obtain are similar in spirit to those of~\cite{BDDP02,GM14}, where the spaces corresponding to stationary PDE are considered.

Our goal is not to prove the optimality of AFEM but rather to understand which convergence rates are to be expected for the solutions of evolutionary PDE given their regularity.
In this paper we aim at establishing the first results in this direction, thus at some points we sacrifice generality in order to have a clearer presentation of the basic ideas and set the foundation for further research in this area.

In order to roughly state our main result, we need to introduce some notation, which will be explained in detail later.

Given a polyhedral space domain $\Omega\subset \real^n$, $n \ge 1$, we let $\TT$ denote the set of all triangulations that are obtained through bisection 
from an 
initial triangulation $\mathcal{T}_0$ of $\Omega$. For each $\mathcal{T}\in \TT$ we denote by $\#\mathcal{T}$ the number of elements of the partition

For $\calT \in \TT$, we let $\mathbb{V}_{\mathcal{T}}^r$ denote the finite element space of continuous piecewise polynomial functions of fixed order $r$, i.e.,
\[
\mathbb{V}_{\mathcal{T}}^r :=\{g\in C(\overline{\Omega}): \ g\big|_{T}\in \Pi^r\ \text{for all } T\in \mathcal{T}\},
\]
where $\Pi^r$ denotes the set of polynomials of total degree (strictly) less than $r$.

Let $r_1,r_2\in\N$ denote the polynomial orders in time and space, respectively.
Let $\{ 0=t_0<t_1<\ldots<t_N=T \}$ be a partition of the time interval and $\mathcal{T}_1,\ldots, \mathcal{T}_N \in \TT$ be partitions of the space domain $\Omega$, where $\mathcal{T}_i$ corresponds to the subinterval $[t_{i-1},t_i)$, $i=1,\ldots N$.
The time-space partition  as illustrated in  Figure \ref{fig:1}  is then given by
\[
\mathcal{P}=\left(\{0=t_0<t_1<\ldots<t_N=T\}, \{\mathcal{T}_1,\ldots, \mathcal{T}_N\}\right)
\quad \text{with}\quad \# \mathcal{P}=\sum_{i=1}^N \#\mathcal{T}_i
\]
 and $\mathbb{P}$ is the set of all those time-space partitions.
This is the precise kind of time-space partitions produced by time-stepping adaptive methods.

\begin{figure}
\includegraphics[width=9cm]{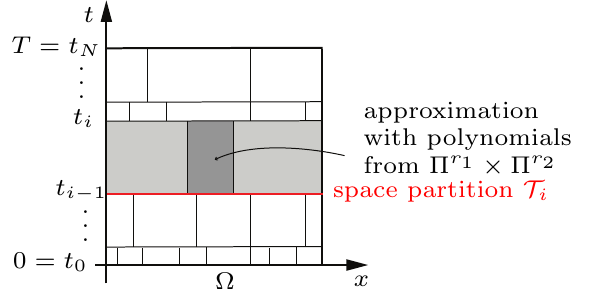}
\caption{ Time-space partition $\mathcal{P}$ }
\label{fig:1}
\end{figure}

The finite element space $\overline{\mathbb{V}}_{\mathcal{P}}^{r_1,r_2}$ subject to such a partition $\mathcal{P}$ is defined as
\begin{align*}
\overline{\mathbb{V}}_{\mathcal{P}}^{r_1,r_2} &:=\{g:  [0,T)\times\Omega\rightarrow \real:  g_{[t_{i-1},t_i)\times\Omega} \in \Pi^{r_1} \otimes \mathbb{V}_{\mathcal{T}_i}^{r_2},\text{ for all }i=1,2,\dots,N \},
\end{align*}
i.e., $g \in \overline{\mathbb{V}}_{\mathcal{P}}^{r_1,r_2}$ if and only if $g(t,\cdot)\in \mathbb{V}_{\mathcal{T}_i}^{r_2}$ for all $t\in [t_{i-1},t_i) $ and $g(\cdot,x)\big|_{[t_{i-1},t_i)}\in \Pi^{r_1}$ for all $x\in \Omega$, and all $i=1,2,\dots,N$.
Discrete solutions of adaptive time-stepping methods, e.g. those which use  Discontinuous Galerkin (DG)  in time, belong to spaces of this type.

We define the best $m$-term approximation error by
\[
\overline{\sigma}_m(f)=\inf_{\# \mathcal{P}\leq m}\inf_{g\in \overline{\mathbb{V}}_{\mathcal{P}}^{r_1,r_2}}\|f-g\|_{L_2([0,T)\times\Omega)}.
\]
In this article we measure the error in $L_2([0,T)\times\Omega)$ and leave the general case of $L_p([0,T), L_q(\Omega))$ and other generalizations as future work.

For $s>0$ we define the approximation class $\overline{\mathbb{A}}_s$ as the set those functions whose best $m$-term approximation error is of order $m^{-s}$, i.e.,
\[
\overline{\mathbb{A}}_s:=\{f\in L_2([0,T)\times\Omega): \ \exists c>0 \text{ such that } \overline\sigma_m(f)\leq c\, m^{-s}, \ \forall m\in \nat\}.
\]
Equivalently, we can define $\overline{\mathbb{A}}_s$ through a semi-norm as follows:
\[
\overline{\mathbb{A}}_s:=\{f\in L_2([0,T)\times\Omega): \ |f|_{\overline{\mathbb{A}}_s}<\infty\}\quad \text{with}\quad |f|_{\overline{\mathbb{A}}_s}:=\sup_{m\in \nat}m^s\,\overline\sigma_m(f).
\]
Alternatively, this definition is equivalent to saying that $f\in \overline{\mathbb{A}}_s$ if there is a constant $c$ such that for all $\varepsilon>0$, there exists a time-space partition $\mathcal{P}$ that satisfies
\begin{equation}\label{appr-class-st}
\inf_{g\in \overline{\V}_{\mathcal{P}}^{r_1,r_2}} \|f-g\|_{L_2([0,T)\times\Omega)} \leq c \varepsilon \quad \text{and}\quad \#\calP \leq \varepsilon^{-1/s},
\end{equation}
and $|f|_{\overline{\mathbb{A}}_s}$ is equivalent to the infimum of all constants $c$ that satisfy \eqref{appr-class-st}.

Our main result is stated in terms of Besov spaces, which will be defined in the next section, and reads as follows.

\begin{main}
Let $0 < s_i < r_i$, $i=1,2$,  $0 < q_1 \le \infty$, $1\leq q_2\leq \infty $   with $s_1 > \big(\frac1{q_1}-\frac12\big)_+$ and $s_2 > n (\frac1{q_2}-\frac12\big)_+$.
Then
\[
B^{s_1}_{q_1,q_1}([0,T),L_2(\Omega)) \cap L_2([0,T),B_{q_2,q_2}^{s_2}(\Omega))
\subset \overline{\mathbb{A}}_s
\quad\text{for}\quad  s = \frac{1}{\frac1{s_1}+\frac{n}{s_2}}.
\]
\end{main}

This result is a consequence of Theorem~\ref{space-time-poly}, where, given $f \in B^{s_1}_{q_1,q_1}([0,T),L_2(\Omega) \cap L_2([0,T),B_{q_2,q_2}^{s_2}(\Omega))$, and $\varepsilon > 0$ we construct a time-space partition $\calP$ that satisfies
\[
\# \mathcal{P}\le c_1 \varepsilon^{-\big(\frac1{s_1}+\frac n{s_2}\big)}
\]
and a function $F \in \overline{\V}_{\mathcal{P}}^{r_1,r_2}$ such that
\[
\| f - F \|_{L_2([0,T)\times\Omega)}
\le c_2 \,\varepsilon \,
\left[ | f |_{B^{s_1}_{q_1,q_1}([0,T),L_2(\Omega))} + \| f \|_{L_2([0,T),B_{q_2,q_2}^{s_2}(\Omega))} \right].
\]
Here $B^s_{p,q}(I,X)$ denote Besov spaces of  $X$-valued functions with respective seminorms $|\cdot|_{B^s_{p,q}(I,X)}$, cf.\  Section \ref{subsec-22}.
{It is worth noting that in order to determine the largest spaces, integrability powers $0<p<1$ must be considered. This makes some proofs more complicated than if we were to consider only $p\ge 1$.
}

Our construction is performed in two steps. The first one uses a Greedy algorithm to obtain the partition of the time domain, resorting  in  a Whitney-type estimate for vector-valued functions. That is, we interpret functions in $L_2([0,T)\times\Omega)$ as functions from $[0,T)$ into $L_2(\Omega)$ as is customary in the study of evolutionary PDE, and develop a nonlinear approximation theory for this situation, by revisiting and extending some results from Storozhenko and Oswald~\cite{Sto77,OS78}. This is presented in Section~\ref{sec:generalizedWhitney}, after defining Besov spaces of vector-valued functions in Section~\ref{sec:Besov}.
In Section~\ref{sec:onevariable} we revisit the known results for the stationary case and perform the aforementioned first step by applying the Greedy algorithm to vector-valued functions.
In Section~\ref{sec:timespace} we combine those two results and prove our main result.
We end this article presenting some discussion and comparison of  the approximation classes for space-time discretizations.

We finally mention that we will use $A \lesssim B$ inside some statements, proofs and reasonings in order to denote $A \le c B$ with a constant $c$ that depends on the parameters indicated in the corresponding statement. As usual, $A\simeq B$ means $A\lesssim B$ and $B \lesssim A$.

\section{Besov spaces of vector-valued functions}%
\label{sec:Besov}

The goal of this section is to define and understand some properties of Besov spaces of functions from a real interval $I$ into a Banach space.
From now on, we let $X$ be a separable Banach space with norm $\| \cdot \|_X$.

We first introduce the moduli of smoothness and state and prove some of their properties, which are analogous to those corresponding to the case of real-valued functions. Afterwards we define the corresponding Besov spaces and state and prove some embeddings.

\subsection{Moduli of smoothness of vector-valued functions on an interval} 

We start this section by providing new definitions of moduli of smoothness for vector-valued functions, which are analogous to the ones already known for real-valued functions, and stating and proving some of their basic properties.

It is worth mentioning that there is a forerunner regarding moduli of smoothness and Whitney-type estimates of vector-valued functions, cf. \cite{DF90}. However,  our definition (which is an immediate generalization of the classical moduli of real-valued functions) differs from the one given in \cite{DF90} (which is more elaborate and tricky). In particular, in \cite{DF90} a duality approach is used between the given Banach space and its dual in order to reduce the definitions and results for abstract functions to real ones. But there is a price to pay:  the results are restricted to the set of bounded functions. Therefore even classical Banach spaces like $L_p(I,X)$ cannot be considered entirely.

Given $0<p\le\infty$, a real interval $I${$=[a,b)$ with $|I|=b-a$}, and a function $f : I \to X$, we say that $f \in L_p(I,X)$ if $f$ is measurable and $\| f \|_{L_p(I,X)} := \Big(\int_I \| f(t) \|_X^p \d t\Big)^{1/p} < \infty$ if $p<\infty$ and $\| f \|_{L_\infty(I,X)} = \esssup_{t \in I} \| f(t) \|_X $.
For such a function $f$, $r\in\N$ and $0<|h|<\frac{{|I|}}r$, the $r$-th order difference $\Delta_h^rf : I_{rh} \to X$ is defined as
\[
\Delta_h^r f(t) = \sum_{i=0}^r {r \choose i} (-1)^{r-i} f(t+ih),\qquad t \in I_{rh} := \{ t \in I : t+rh \in I\},
\]
which clearly satisfies $\Delta_h^r f = \Delta_h \Delta_h^{r-1} f$ and $\| \Delta_h^r f \|_{L_p(I_{rh})}^{\min\{1,p\}} \le 2 \| \Delta_h^{r-1} f \|_{L_p(I_{(r-1)h})}^{\min\{1,p\}}$, understanding that $\Delta_h f = \Delta_h^1 f$ and $\Delta_h^0 f = f$.

The modulus of smoothness is defined as
\beq\label{omega1}
\omega_r(f,I,u)_p := \sup_{0<|h| \leq u}\|\Delta_h^r f \|_{L_p(I_{rh},X)}
= \sup_{0<h \leq u}\|\Delta_h^r f \|_{L_p(I_{rh},X)}
,\qquad u > 0,
\eeq
which is clearly increasing as a function of $u$,
and the \emph{averaged} modulus of smoothness is defined, for $u>0$, as
\beq\label{w1}
w_r(f,I,u)_p:=\left(\frac{1}{2u}\int_{-u}^u\|\Delta_h^r f\|^p_{L_p(I_{rh},X)} \, \d h\right)^\frac 1p
=\left(\frac{1}{u}\int_{0}^u\|\Delta_h^r f\|^p_{L_p(I_{rh},X)} \, \d h\right)^\frac 1p.
\eeq

The well-known definitions for $f : \Omega \to \R$, with $\Omega$ a domain of $\R^n$, $n \ge 1$, are as follows. For $h \in \R^n$,
the domain of $\Delta_h^r f$ is the set $\Omega_{rh} := \{ x \in \Omega : x, x+h, \dots,x+rh \in \Omega\}$, and the moduli of smoothness $\omega_r(f,\Omega,u)_p$, $w_r(t,\Omega,u)_p$ are defined  for $u>0$ via
\begin{align}
\omega_r(f,\Omega,u)_p &:= \sup_{0< |h| \leq u}\|\Delta_h^r f \|_{L_p(\Omega_{rh})} ,
 \label{mod-smooth-B}
\\
w_r(f,\Omega,u)_p&:=\left(\frac{1}{(2u)^n}\int_{[-u,u]^n} \|\Delta_h^r f\|^p_{L_p(\Omega_{rh})} \, \d h\right)^\frac 1p. \notag
\end{align}

As a consequence of the fact that
$ \Delta_{mh}^1 f(x) = \sum_{i=0}^{m-1} \Delta_h^1 f(x+ih)$, for $m\in\N$,
we can prove by induction  $\| \Delta_{mh}^r f \|_{L_p(A_{rmh})} \le m^r \| \Delta_h^r f \|_{L_p(A_{rh})}$, for $A = I$ or $A = \Omega$ (for details see~\cite[Sect.~3.1]{PP87}).
As an immediate consequence of this,
\beq\label{homogeneity}
\omega_r(f,A,mu)_p^{\min\{1,p\}} \le m^r \omega_r(f,A,u)_p^{\min\{1,p\}}, \quad u > 0.
\eeq
From the properties stated above, we  have
\beq\label{inductivebound}
w_{r+1}(f,A,u)^{\min\{1,p\}}_p \le 2 w_r(f,A,u)^{\min\{1,p\}}_p.
\eeq
Finally, we notice that if $f : [a,b) \to X$ and $\hat f : [0,1) \to X$ with $\hat f(t) = f(a+t(b-a))$, then, for $u>0$
\beq\label{scaling}
\begin{split}
\omega_r(f,[a,b),u)_p &= (b-a)^{1/p} \, \omega_r(\hat f, [0,1),  (b-a)^{-1} u)_p,
\\ 
w_r(f,[a,b),u)_p &= (b-a)^{1/p} \, w_r(\hat f, [0,1),  (b-a)^{-1}u)_p.
\end{split}
\eeq

Now we prove that the two moduli of smoothness $w_r$ and $\omega_r$ as defined above in~\eqref{omega1} and~\eqref{w1} are equivalent.
This result is well-known and proved for real-valued functions in~\cite[Lem. 6.5.1]{DL93}. The proof for vector-valued functions is analogous and we sketch it here for completeness.

\begin{lem}\label{lem-equiv}
Given $0<p<\infty$ and $r \in \N$ the two definitions of moduli of smoothness $w_r(\cdot,\cdot,\cdot)_p$ and $\omega_r(\cdot,\cdot,\cdot)_p$ are equivalent, more precisely
\begin{align*}
    w_r(f,I,u)_p 
 \le
\omega_r(f,I,u)_p
\le c w_r(f,I,u)_p,
\end{align*}
for all $f \in L_p(I,X)$, $I = [a,b)$ and $0 < u < |I|/r$, where the constant $c$ depends only on $r$ and $p$, but is otherwise independent of $f$, $I$, and $u$.
\end{lem}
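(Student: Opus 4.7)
The left-hand inequality $w_r(f,I,u)_p \le \omega_r(f,I,u)_p$ is immediate from the definitions: for every $h \in (0,u]$ we have $\|\Delta_h^r f\|_{L_p(I_{rh},X)} \le \omega_r(f,I,u)_p$, so averaging in $h$ over $(0,u]$ preserves the bound.

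For the reverse inequality, my plan is to adapt the classical scalar argument of \cite[Lem.~6.5.1]{DL93} to the vector-valued setting. The core of that argument is a purely algebraic identity involving finite differences: given a fixed step $h \in (0,u]$ and an auxiliary parameter $\delta$ of size comparable to $h$, one expresses $\Delta_h^r f(t)$ as a bounded linear combination (with coefficients depending only on $r$) of translated differences of the form $\Delta_\tau^r f(t+\xi)$, where $\tau$ ranges over a subinterval of $(0,cu]$ and $\xi$ over a set of bounded shifts. Concretely, the identity is obtained by averaging
\[
\Delta_h^r f(t) = \Delta_{h+\tau}^r f(t) - \bigl(\Delta_{h+\tau}^r f(t) - \Delta_h^r f(t)\bigr)
\]
over $\tau \in [0,\delta]$ and expanding the parenthesis via the binomial formula, after which the resulting error terms can themselves be written as $r$-th order differences of step comparable to $\tau$. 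Since this identity is algebraic, it transfers verbatim from scalar $f$ to $X$-valued $f$. Taking $\|\cdot\|_X$, then the Bochner $L_p$-norm in $t$, then integrating over $\tau$, and finally invoking the scaling~\eqref{scaling} and homogeneity~\eqref{homogeneity} of the moduli to absorb the factor $c$, yields $\|\Delta_h^r f\|_{L_p(I_{rh},X)} \le c \, w_r(f,I,u)_p$ with $c$ depending only on $r$ and $p$. Taking the supremum over $h$ gives the claim.

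The main point of attention, and the only place where the vector-valued character really shows up, is the handling of $p$-th powers in the regime $0<p<1$, where the triangle inequality in $X$ is replaced by the $p$-subadditivity $\bigl\|\sum_i g_i\bigr\|_X^p \le \sum_i \|g_i\|_X^p$. Since the same subadditivity is used for the scalar $|\cdot|^p$ throughout \cite{DL93}, and since Bochner integrability and Minkowski's inequality are available in the separable Banach space $X$, the entire chain of estimates carries over unchanged, with constants independent of $X$.
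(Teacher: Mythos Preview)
Your overall strategy matches the paper's: both adapt the scalar argument of \cite[Lem.~6.5.1]{DL93}, observing that the key identity is algebraic and hence carries over to $X$-valued functions, with the $p$-subadditivity handling $0<p<1$. However, your description of the identity is too loose to stand as a proof. Starting from the tautology $\Delta_h^r f = \Delta_{h+\tau}^r f - (\Delta_{h+\tau}^r f - \Delta_h^r f)$ and asserting that the parenthesis, after ``expanding via the binomial formula,'' becomes a combination of $r$-th order differences of step comparable to $\tau$ is not correct as stated: for $r=2$, for instance, $\Delta_{h+\tau}^2 f(t) - \Delta_h^2 f(t) = \Delta_{2\tau} f(t+2h) - 2\Delta_\tau f(t+h)$, which are \emph{first}-order differences. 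Some further manipulation is needed, and you have not indicated what it is.

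The paper (following \cite{DL93}) instead writes down the explicit reproducing formula
\[
\Delta_h^r f(t) = \sum_{l=1}^r (-1)^l \binom{r}{l}\bigl[\Delta_{ls}^r f(t+lh) - \Delta_{h+ls}^r f(t)\bigr],
\]
proved by induction on $r$. After restricting to $I=[0,1)$ by scaling, one integrates in $t$ over $[0,1/2]$, averages in $s$ over $[0,u]$, and substitutes $h'=ls$ and $h'=h+ls$ to land on $w_r(f,I,(r+1)u)_p$; the other half of $I$ is handled by symmetry, and~\eqref{homogeneity} removes the factor $r+1$. Your proposal would be complete once you replace the vague ``averaging and expanding'' step by this formula (or an equivalent one) and track the domain constraints on $t$ and $s$.
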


\begin{proof}
The fact that $w_r(f,I,u)_p\leq \omega_r(f,I,u)_p$ is obvious. Therefore, it remains to prove the converse inequality. We prove the result for the reference situation of $I = [0,1)$, the general case follows by scaling using~\eqref{scaling}.

We use the reproducing formula
\beq \label{repr-form-1}
\Delta_h^rf(t)=\sum_{l=1}^r(-1)^l{r\choose l}\left[\Delta_{ls}^r f(t+lh)-\Delta_{h+ls}^r f(t)\right],
\eeq
which holds if $t\in [0,1-rh]$ and
\[
t+lh+rls\leq  1 \quad \text{and}\quad t+rh+rls\leq 1.
\]
This together yields the range $t\in [0,1-rh-r^2s]$.
Formula \eqref{repr-form-1} is proved by induction, starting with the observation that 
\begin{align*}
\Delta^1_hf(t)
&=f(t+h)-f(t)\\
&=f(t+h)-f(t+h+ls)+f(t+h+ls)-f(t)\\
&= - \big[\Delta^1_{ls}f(t+h)-\Delta^1_{h+ls}f(t) \big].
\end{align*}

We now consider $0<h \le u\leq \frac{1}{4r}$ and $0\leq t\leq \frac 12$. This gives us the upper bound $s<\frac{1}{4r^2}$. Integrating formula \eqref{repr-form-1} yields
\[
\int_0^{1/2}\|\Delta^r_h f(t)\|_X^p\ud t\lesssim \sum_{l=1}^r \int_0^{1/2}\|\Delta_{ls}^r f(t+lh)\|_X^p\ud t +\int_0^{1/2}\|\Delta^r_{h+ls}f(t)\|_X^p\ud t.
\]
Thus, setting $I_{-}:=[0,1/2]$ and averaging over $s\in \left[0,u\right]$ gives
\begin{align}
    \|\Delta^r_h f&\|_{L_p(I_{-},X)}^p
    \\
    &\lesssim \sum_{l=1}^r\frac{1}{u}\left[
    \int_0^u \int_{I_{-}}\|\Delta_{ls}^r f(t+lh)\|_X^p\ud t\ud s
    + \int_0^u \int_{I_{-}}\|\Delta_{h+ls}^r f(t)\|_X^p\ud t\ud s
    \right]\notag\\
     &= \sum_{l=1}^r\frac{1}{lu}\Bigg[
    \int_0^{lu} \int_{I_{-}}\|\Delta_{h'}^r f(t+lh)\|_X^p\ud t\ud h'
    + \int_h^{h+lu} \int_{I_{-}}\|\Delta_{h'}^r f(t)\|_X^p\ud t\ud h'\Bigg]\notag\\
     &\lesssim \sum_{l=1}^r\frac{1}{(r+1)u}
    \int_0^{(r+1)u} \|\Delta_{h'}^r f\|_{L_p(I,X)}^p \ud h'\notag \\
    &\leq w_r(f,I,(r+1)u)_p^p, \label{est-07}
\end{align}
where in the second step we used the substitution $h':=ls$ in the first and $h':=h+ls$ in the second integral.
By symmetry, we also have that $\|\Delta^r_{-h} f\|_{L_p(I_{+},X)}^p \le w_r(f,I,(r+1)u)_p^p$ with $I_+ = [1/2,1]$.
Taking the supremum w.r.t. $0<h\leq u$ on both sides we arrive at
\begin{align*}
    \omega_r(f,I,u)_p
    &\lesssim w_r(f,I, (r+1)u)_p
\end{align*}
Using~\eqref{homogeneity} we obtain
\begin{align*}
    \omega_r(f,I,(r+1)u)_p
     \lesssim \omega_r(f,I,u)_p
     \lesssim w_r(f,I,(r+1)u)_p,
\end{align*}
which completes the proof.
\end{proof}

\subsection{Besov spaces and embeddings}
\label{subsec-22}

Using the generalized modulus of smoothness defined in the previous subsection, we introduce the Besov spaces $B^s_{p,q}(I,X)$, $s > 0$, $0<p,q\le \infty$,  which contain   all functions $f\in L_p(I,X)$ such that for $r:=\lfloor s\rfloor+1$ the quasi-seminorm
\beq \label{seminorm-B}
\begin{split}
|f|_{B^{s}_{p,q}(I,X)}&:=
\displaystyle\left(\int_0^{|I|/r}\left[u^{-s}\omega_r(f,I,u)_p\right]^q\frac{\ud u}{u}\right)^{1/q}<\infty
,\qquad  0<q<\infty, \\ 
|f|_{B^{s}_{p,\infty}(I,X)} &:=
\sup_{0<u<|I|/r}u^{-s}\omega_r(f,I,u)_p < \infty. 
\end{split}
\eeq
Moreover, a quasi-norm for $B^s_{p,q}(I,X)$ is given by
\beq\label{norm-B}
\|f\|_{B^s_{p,q}(I,X)}:=\|f\|_{L_p(I,X)} + |f|_{B^s_{p,q}(I,X)} ,
\eeq
which is a norm whenever $1\leq p,q\leq \infty$.


\begin{rem}\label{rem-equiv}
One can replace the integral $\int_0^{|I|/r}$ by $\int_0^1$ if $|I|<\infty$ and still get an equivalent norm.
{More precisely,
\[
\int_0^{|I|/r} [u^{-s}\omega_r(f,I,u)_p]^q\frac{\ud u}{u}
\simeq \int_0^1 [u^{-s}\omega_r(f,I,u)_p]^q\frac{\ud u}{u}
\]
with equivalence constants that depend only on $s$, $r$, $p$, $q$, but are otherwise independent of $f$ and $|I|$ as $|I| \to 0$.
}

{
We prove this claim for $0<q<\infty$, the case $q=\infty$ is analogous.
If $|I|/r<1$ then, on the one hand, $\int_0^{|I|/r} [u^{-s}\omega_r(f,I,u)_p]^q\frac{\ud u}{u}
\le \int_0^1 [u^{-s}\omega_r(f,I,u)_p]^q\frac{\ud u}{u}$.
On the other hand, $\omega_r(f,I,u)_p=\omega_r(f,I,|I|/r)_p$, when $u\geq |I|/r$.
Therefore, using~\eqref{homogeneity} and the monotonicity of $\omega_r(f,I,\cdot)_p$,
\begin{align*}
\int_{|I|/r}^1 [u^{-s}\omega_r(f,I,u)_p]^q\frac{\ud u}{u}
&= \omega_r(f,I,|I|/r)_p^q \int_{|I|/r}^1 u^{-sq-1} \ud u
\\
&\lesssim \omega_r(f,I,|I|/(2r))_p^q  \, (|I|/r)^{-sq}
\\
&\lesssim  \omega_r(f,I,|I|/(2r))_p^q \int_{|I|/(2r)}^{|I|/r} u^{-sq-1} \ud u
\\
&\le \int_{|I|/(2r)}^{|I|/r}[u^{-s}\omega_r(f,I,u)_p]^q\frac{\ud u}{u},
\end{align*}
which yields the second inequality for the case $|I|/r<1$.
}

{
If $|I|/r > 1$, trivially $\int_0^1 [u^{-s}\omega_r(f,I,u)_p]^q\frac{\ud u}{u} \le \int_0^{|I|/r} [u^{-s}\omega_r(f,I,u)_p]^q\frac{\ud u}{u}$. Besides, using again~\eqref{homogeneity} and the monotonicity of $\omega_r(f,I,\cdot)_p$,
}
\[
\omega_r\left(f,I,\frac 12\right)_p\leq \omega_r(f,I,u)_p\leq \omega_r\left(f,I,\frac{|I|}r\right)_p\lesssim {|I|^r} \omega_r\left(f,I,\frac 12\right)_p, \quad \frac 12\leq u\leq |I|/r.
\]
Hence,
\[
\int_1^{|I|/r}[u^{-s}\omega_r(f,I,u)_p]^q\frac{\ud u}{u}
\lesssim {|I|^{rq}}\omega_r\left(f,I,\frac 12\right)_p^q
\lesssim {|I|^{rq}}\int_{\frac 12}^1 [u^{-s}\omega_r(f,I,u)_p]^q\frac{\ud u}{u},
\]
which proves the claim for the case $|I|/r > 1$.
\end{rem}

\begin{rem}
Our definition for the Besov spaces above is in good agreement with the standard case: When $f:\Omega\rightarrow \real$, with $\Omega$ a domain of $\real^n$, the usual Besov spaces $B^s_{p,q}(\Omega)$ are defined as those subspaces containing all functions  $f\in L_p(\Omega)$ for which
\beq 
|f|_{B^{s}_{p,q}(\Omega)}:=
\displaystyle\left(\int_0^{\diam(\Omega)}\left[u^{-s}\omega_r(f,\Omega,u)_p\right]^q\frac{\ud u}{u}\right)^{1/q}<\infty
\eeq
(with the usual modification if $q=\infty$) and $r = \lfloor s \rfloor + 1$. Here the modulus of smoothness involved is the usual one given in    \eqref{mod-smooth-B}.
The space $B^s_{p,q}(\Omega)$ is then quasi-normed via
$\|f\|_{B^s_{p,q}(\Omega)}:=\|f\|_{L_p(\Omega)}+|f|_{B^s_{p,q}(\Omega)}.
$ For more information on these spaces we refer to \cite{DL93, T83}.
\end{rem}

Later on it will be useful for us to discretize the quasi-seminorm \eqref{seminorm-B} as follows.

\begin{lem}\label{discrete-seminorm}
The  quasi-seminorm \eqref{seminorm-B} for $B^s_{p,q}(I,X)$ is equivalent to
\beq \label{seminorm-B-2}
\begin{split}
|f|_{B^{s}_{p,q}(I,X)}^* &:=
\left(\sum_{k=0}^{\infty}\left[2^{ks}\omega_r(f,I,2^{-k})_p\right]^q\right)^{1/q}, \qquad  0<q<\infty, \\
|f|_{B^{s}_{p,\infty}(I,X)}^* &:=
\sup_{k\geq 0}2^{ks}\omega_r(f,I,2^{-k})_p, 
\end{split}
\eeq
with constants of equivalence independent of $f$ and $I$ as $|I| \to 0$.
\end{lem}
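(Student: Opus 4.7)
The plan is to split the integral defining $|f|_{B^s_{p,q}(I,X)}$ into dyadic pieces and exploit the monotonicity and subadditivity-type behavior of $\omega_r(f,I,\cdot)_p$ from \eqref{homogeneity}, which together pin the integrand on each piece between constant multiples of $2^{ks}\omega_r(f,I,2^{-k})_p$.

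First, I would invoke Remark~\ref{rem-equiv} to reduce to showing
\[
\int_0^1 \bigl[u^{-s}\omega_r(f,I,u)_p\bigr]^q\,\frac{\d u}{u}
\simeq
\sum_{k=0}^\infty \bigl[2^{ks}\omega_r(f,I,2^{-k})_p\bigr]^q,
\]
with equivalence constants independent of $f$ and $|I|$; the analogous statement for $q=\infty$ (with a supremum in $k$) will be argued in parallel. Writing $\int_0^1 = \sum_{k=0}^\infty \int_{2^{-k-1}}^{2^{-k}}$, on each dyadic interval $u \in [2^{-k-1},2^{-k}]$ we have $u^{-s} \simeq 2^{ks}$ and $\int_{2^{-k-1}}^{2^{-k}}\frac{\d u}{u} = \ln 2$, so the sole remaining task is to compare $\omega_r(f,I,u)_p$ with $\omega_r(f,I,2^{-k})_p$ on this interval.

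By monotonicity of $u\mapsto\omega_r(f,I,u)_p$, we get the upper bound $\omega_r(f,I,u)_p \le \omega_r(f,I,2^{-k})_p$ immediately. For a matching lower bound I would apply \eqref{homogeneity} with $m=2$: it gives
\[
\omega_r(f,I,2^{-k})_p^{\min\{1,p\}} \le 2^r\,\omega_r(f,I,2^{-k-1})_p^{\min\{1,p\}} \le 2^r\,\omega_r(f,I,u)_p^{\min\{1,p\}}.
\]
Raising to the $q/\min\{1,p\}$-th power and integrating over $[2^{-k-1},2^{-k}]$ then shows
\[
\bigl[2^{ks}\omega_r(f,I,2^{-k})_p\bigr]^q
\simeq
\int_{2^{-k-1}}^{2^{-k}} \bigl[u^{-s}\omega_r(f,I,u)_p\bigr]^q\,\frac{\d u}{u},
\]
with constants depending only on $r,s,p,q$. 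Summing over $k \ge 0$ yields the desired two-sided bound when $0<q<\infty$; for $q=\infty$ one takes the supremum over $k$ and $u$, respectively, at each step.

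The only mild subtlety is the presence of the $\min\{1,p\}$-exponent coming from the quasi-triangle inequality in $L_p$ when $0<p<1$, but since the homogeneity constant $2^{r/\min\{1,p\}}$ depends only on $r$ and $p$, it disappears into the constants of equivalence. No other obstacle is expected, as the argument is entirely pointwise in $u$ and does not interact with $X$ beyond the $L_p(I,X)$ norm used inside $\omega_r$.
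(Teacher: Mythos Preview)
Your proposal is correct and essentially identical to the paper's proof: both reduce to $\int_0^1$ via Remark~\ref{rem-equiv}, decompose dyadically, and use monotonicity together with \eqref{homogeneity} (with $m=2$) to obtain $u^{-s}\omega_r(f,I,u)_p \simeq 2^{ks}\omega_r(f,I,2^{-k})_p$ on each interval $[2^{-k-1},2^{-k}]$, then sum (or take sup). The handling of the $\min\{1,p\}$ exponent is also the same.
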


\begin{proof} The proof follows along the lines of the standard case, which may be found in \cite[p.~56]{DL93}. Using \eqref{homogeneity} with $m=2$ and the monotonicity of $\omega_r(f,I,\cdot)$ we see that for $u\in [2^{-k-1}, 2^{-k}]$ it holds
\begin{align*}
  2^{-r}\left(2^{ks}\omega_r(f,I,2^{-k})_p\right)^{\min\{1,p\}}  &\leq \left(u^{-s}\omega_r(f,I,u)\right)^{\min\{1,p\}}_p\\
  &\leq \left(2^{(k+1)s}\omega_r(f,I,2^{-k})_p\right)^{\min\{1,p\}}.
\end{align*}
Raising all terms of the inequality to the power  ${\frac{1}{\min\{1,p\}}}$ we obtain
\[
u^{-s}\omega_r(f,I,u)_p \simeq 2^{ks}\omega_r(f,I,2^{-k})_p \qquad \text{for} \qquad u\in [2^{-k-1}, 2^{-k}].
\]
Hence, since $\int_{2^{-k-1}}^{2^{-k}}\frac{\ud u}{u}=\ln 2\simeq 1$  we get 
\[
\left(\int_{2^{-k-1}}^{2^{-k}}[u^{-s}\omega_r(f,I,u)_p]^q\frac{\ud u}{u}\right)^{1/q}\simeq 2^{ks}\omega_r(f,I,2^{-k})_p.
\]
This completes the proof for $0<q<\infty$ taking into account Remark \ref{rem-equiv}, after adding all terms for $k\ge0$. The case $q=\infty$ is analogous.
\end{proof}


\subsubsection{Embedding results}
Before we provide some embeddings for the scale $B^s_{p,q}(I,X)$ needed later on,
let us briefly recall what is known concerning the Besov  spaces  $B^s_{p,q}(\Omega)$.

\begin{prop} \label{prop-emb-B}
Let   $s>0$ and $0<p,q\leq\infty$.
\begin{itemize}
\item[(i)]
Let $0<\varepsilon<s$, $0< \nu\leq\infty$, and $q\leq \theta\leq\infty$, then
\[
B^{s}_{p,q}(\Omega) \hookrightarrow B^{s-\varepsilon}_{p,\nu}(\Omega)\qquad\text{and}\qquad
B^s_{p,q}(\Omega) \hookrightarrow B^s_{p,\theta}(\Omega).
\]
\item[(ii)] (Sobolev-type embedding)  \
Let $0<\sigma< s$ and $p< \tau $ be such that
\beq
s-\frac{n}{p} \geq  \sigma-\frac{n}{\tau},
\label{delta-B}
\eeq
then
\beq\label{sob-emb-B}
B^s_{p,q}(\Omega) \hookrightarrow B^\sigma_{\tau,\theta}(\Omega),
\eeq
where $0<\theta\leq \infty$ and, additionally,  $q\leq \theta$ if an equality holds in \eqref{delta-B}. Moreover, in the limiting case when $\sigma=0$
and $\theta$ is such that
\beq
s-\frac{n}{p}\geq -\frac{n}{\theta},
\label{delta-2-B}
\eeq
we have
\beq\label{Lim-emb-B}
B^s_{p,q}(\Omega) \hookrightarrow L_{\theta}(\Omega),
\eeq
where again $q\leq \theta$ if an equality holds in \eqref{delta-2-B}.
\item[(iii)] If the domain $\Omega\subset \rn$ is bounded, then for $\tau\leq p$  we have the  embedding
\beq
B^s_{p,q}(\Omega)\hookrightarrow B^s_{\tau,q}(\Omega). 
\eeq
\end{itemize}
\end{prop}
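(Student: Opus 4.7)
The proposition collects classical embeddings for Besov spaces of real-valued functions on a domain, so my plan is to sketch each part using the discretized seminorm (the analogue of Lemma~\ref{discrete-seminorm}), and to cite standard references for the deeper Sobolev-type case.

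Part (i) follows from elementary manipulations of the discrete seminorm $\bigl(\sum_{k\ge 0}(2^{ks}\omega_r(f,\Omega,2^{-k})_p)^q\bigr)^{1/q}$. The embedding $B^s_{p,q}(\Omega)\hookrightarrow B^s_{p,\theta}(\Omega)$ for $q\le\theta$ is just the monotonicity of $\ell^q$-norms in $q$. For $B^s_{p,q}(\Omega)\hookrightarrow B^{s-\varepsilon}_{p,\nu}(\Omega)$ I would factor $2^{k(s-\varepsilon)}\omega_r=2^{-k\varepsilon}\cdot 2^{ks}\omega_r$ and apply Hölder's inequality in the $k$-variable; the geometric factor $2^{-k\varepsilon}$ provides summability and is where the strict inequality $\varepsilon>0$ enters. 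The $L_p$-norm controlling~\eqref{norm-B} is trivially preserved.

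Part (iii) reduces, again via the discrete seminorm, to the single pointwise (in $u$) estimate $\omega_r(f,\Omega,u)_\tau\le |\Omega|^{1/\tau-1/p}\,\omega_r(f,\Omega,u)_p$ when $\tau\le p$ and $\Omega$ is bounded, which is Hölder applied to each difference $\Delta_h^r f\in L_\tau(\Omega_{rh})$ (using that $\Omega_{rh}\subset\Omega$ has finite measure). The same Hölder estimate handles the $L_p$-term in~\eqref{norm-B}.

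The substantive part is (ii), which is the classical Sobolev-type embedding for Besov spaces. By composing with (i), the subcritical case $s-n/p>\sigma-n/\tau$ reduces to the critical case of equality, so I would concentrate on the latter. For the critical embedding $B^s_{p,q}(\Omega)\hookrightarrow B^\sigma_{\tau,\theta}(\Omega)$ with $q\le\theta$, the standard route is via Marchaud's inequality combined with the Jackson-type estimate $\omega_r(f,\Omega,u)_\tau\lesssim u^{n(1/p-1/\tau)}\omega_r(f,\Omega,u)_p$ obtained from a polynomial approximation argument on cubes, or equivalently through atomic/wavelet decompositions. The limiting case $\sigma=0$ with the embedding into $L_\theta(\Omega)$ is proved along the same lines, invoking a Hardy-type inequality to absorb the integral in $u$. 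I expect the sharp requirement $q\le\theta$ at equality in~\eqref{delta-B} and~\eqref{delta-2-B} to be the main delicate point, and rather than reprove it I would cite the detailed treatments in~\cite[Ch.~2 and Ch.~5]{DL93} and~\cite[Sect.~2.7.1]{T83}, since the result is used here as a black box for real-valued functions on $\Omega$ and is not a new contribution of this paper.
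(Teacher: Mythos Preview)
Your proposal is correct and in fact more detailed than the paper, which gives no proof at all for this proposition: immediately after the statement the authors simply cite \cite[\S2.10, 12.8]{DL93}, \cite[Thm.~1.15]{HS09}, and \cite{BS88}. Your sketches for (i) and (iii) are the standard arguments and match exactly what the paper later carries out for the vector-valued analogue in Proposition~\ref{prop-emb-BX}; your decision to treat (ii) as a black box with references is precisely what the paper does.
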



\noindent
\begin{minipage}{0.44\textwidth}
\begin{rem}
\begin{itemize}
  \item[(i)]
The above results can be found in \cite[\S~2.10, 12.8]{DL93}, \cite[Thm.~1.15]{HS09}, and \cite{BS88}.  \\
In the interpolation diagram aside we have  illustrated  the area of possible embeddings of a fixed original space $B^s_{p,q}(\Omega)$ into spaces $B^{\sigma_1}_{\tau_1,\nu_1}(\Omega)$ and $B^{\sigma_2}_{\tau_2,\nu_2}(\Omega)$. The lighter shaded area corresponds to the additional embeddings we have if the underlying domain $\Omega$ is  bounded.
\end{itemize}
\end{rem}
\end{minipage}\hfill \begin{minipage}{0.55\textwidth}
\includegraphics[width=\textwidth]{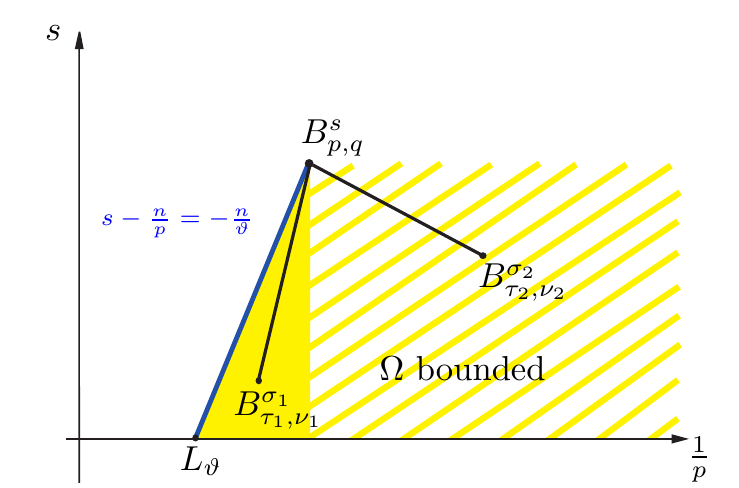}
\captionof{figure}{Embeddings for  $B^s_{p,q}(\Omega)$}
\end{minipage}\\

\begin{itemize}
    \item[(ii)] In the non-limiting case (corresponding to the strict inequality in \eqref{delta-B} and \eqref{delta-2-B}) the embeddings in Proposition \ref{prop-emb-B} are known to be compact. In particular, for  $\alpha>0$ and  $p<\tau$,  the embeddings $B^{s+\alpha}_{p,p}(\Omega)\hookrightarrow B^{s}_{\tau, \tau}(\Omega)$ ($s>0$) and $B^{\alpha}_{p,p}(\Omega)\hookrightarrow L_{\tau}(\Omega)$ ($s=0$) are compact if, and only if,
\[\alpha-\frac{n}{p}>-\frac {n}{\tau}.
\]
\end{itemize}

For the scale $B^s_{p,q}(I,X)$  there are counterparts of the embeddings from Proposition~\ref{prop-emb-B}.

\begin{prop}\label{prop-emb-BX}
Assume  $s>0$ and $0<p,q\leq\infty$.
\begin{itemize}
\item[(i)]
Let $0<\varepsilon<s$, $0< \nu\leq\infty$, and $q\leq \theta\leq\infty$, then
\[
B^{s}_{p,q}(I,X) \hookrightarrow B^{s-\varepsilon}_{p,\nu}(I,X)\qquad\text{and}\qquad
B^s_{p,q}(I,X) \hookrightarrow B^s_{p,\theta}(I,X).
\]
\item[(ii)] If the time interval $I$ is bounded, then for $\tau\leq p$   we have the  embedding
\beq
B^s_{p,q}(I,X)\hookrightarrow B^s_{\tau,q}(I,X). 
\eeq
\end{itemize}
\end{prop}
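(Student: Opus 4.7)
The plan is to reduce both assertions to elementary monotonicity inclusions---of $\ell^q$ sequence spaces in~(i) and of $L_p$-norms on a bounded interval in~(ii)---after replacing the integral seminorm by the discrete equivalent from Lemma~\ref{discrete-seminorm}.

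For part~(i), I would first note that, exactly as in the scalar case, the quasi-seminorm $|f|_{B^s_{p,q}(I,X)}$ is independent, up to equivalent constants, of the specific integer $r>s$ used in the modulus of smoothness. This extends to $X$-valued functions via a Marchaud-type estimate whose proof mimics the reproducing-formula argument already used for Lemma~\ref{lem-equiv}; a brief preliminary lemma would record this. With that in hand, one can use the common value $r=\lfloor s\rfloor+1$ in both seminorms (legitimate since $r>s-\varepsilon$). Setting $a_k:=2^{ks}\omega_r(f,I,2^{-k})_p$, the discrete seminorm of $B^{s-\varepsilon}_{p,\nu}(I,X)$ becomes $\|(2^{-k\varepsilon}a_k)\|_{\ell^\nu}$. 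When $\nu\ge q$ this is bounded by $\|a_k\|_{\ell^q}$ using $2^{-k\varepsilon}\le 1$ and the inclusion $\ell^q\hookrightarrow\ell^\nu$; when $0<\nu<q$, Hölder's inequality with exponent $\mu$ satisfying $1/\nu=1/q+1/\mu$ gives $\|(2^{-k\varepsilon}a_k)\|_{\ell^\nu}\le\|2^{-k\varepsilon}\|_{\ell^\mu}\|a_k\|_{\ell^q}$, whose first factor is finite since $\varepsilon>0$. The limiting cases $\nu=\infty$ or $q=\infty$ are handled analogously. Finally, $B^s_{p,q}\hookrightarrow B^s_{p,\theta}$ for $q\le\theta$ is just the inclusion $\ell^q\hookrightarrow\ell^\theta$ applied to $(a_k)$.

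For part~(ii), the tool is the vector-valued Hölder inequality on bounded sets: for $\tau\le p$, bounded $I$ and any measurable $A\subset I$, every measurable $g:A\to X$ satisfies $\|g\|_{L_\tau(A,X)}\le|I|^{1/\tau-1/p}\|g\|_{L_p(A,X)}$. Applied to $g=\Delta_h^r f$ on $A=I_{rh}$ and then taking the supremum over $0<h\le u$, this yields $\omega_r(f,I,u)_\tau\le|I|^{1/\tau-1/p}\omega_r(f,I,u)_p$. Substituting into~\eqref{seminorm-B} and using the analogous bound for the $L_\tau(I,X)$ term produces $\|f\|_{B^s_{\tau,q}(I,X)}\lesssim\|f\|_{B^s_{p,q}(I,X)}$.

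The one point that requires attention beyond bookkeeping is the $r$-independence fact needed in~(i). However, translating the classical Marchaud inequality to vector-valued functions introduces no essential obstacle, as the underlying reproducing formula~\eqref{repr-form-1} and triangle-inequality arguments (with the usual $\min\{1,p\}$-exponent adjustments when $p<1$) carry over verbatim to the $X$-valued setting.
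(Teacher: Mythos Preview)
Your proof is correct and follows essentially the same approach as the paper: the discrete seminorm from Lemma~\ref{discrete-seminorm} together with $\ell^q$-monotonicity and H\"older for~(i), and the bounded-interval inclusion $L_p(I,X)\hookrightarrow L_\tau(I,X)$ for~(ii). You are in fact more careful than the paper in flagging the $r$-independence issue---the paper silently uses the same order $r$ in both seminorms without comment---so your remark that a vector-valued Marchaud-type lemma is needed to justify this is a legitimate point of rigor that the paper glosses over.
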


\begin{proof}
The embeddings in (i) and  (ii) can be proven as in the standard case, using the discrete version of the seminorm for Besov spaces, i.e.,
\[
|f|_{B^s_{p,q}(I,X)} \simeq |f|_{B^s_{p,q}(I,X)}^* = \left( \sum_{k=0}^{\infty} [2^{ks}\omega_r(f,2^{-k})_p]^q\right)^{\frac 1q},
\quad 0 < q < \infty,
\]
with the analogous one for $q = \infty$.
Indeed, the second embedding in (i) is just a consequence of the monotonicity of the $\ell_q$ sequence spaces, i.e., $\ell_q\hookrightarrow \ell_{\theta}$ for $q\leq \theta$.
The first embedding for $q\leq \nu$ is also clear since $2^{k(s-\varepsilon)}\leq 2^{ks}$. If $\nu<q$ one uses H\"older's inequality with $\frac q\nu>1$, which gives the desired result. \\
Moreover, (ii)  follows immediately  since for $\tau\leq p$ and  $|I|<\infty$ we have  $
L_p(I,X)\hookrightarrow L_{\tau}(I,X).
$
\end{proof}

\begin{rem}
The counterpart of the limiting embedding \eqref{Lim-emb-B} in Prop. \ref{prop-emb-B}(ii) is derived in Corollary \ref{cor-gen-whitney} as an application of our generalized Whitney's estimate presented in Proposition~\ref{prop-gen-whitney}. Moreover, the  Sobolev-type embeddings as stated  in Prop. \ref{prop-emb-B}(ii), formula \eqref{sob-emb-B}, should also hold. The proof in the standard case, cf.  \cite[\S~ 12.8]{DL93}, involves spline representations for Besov spaces, which we have not provided for our generalized setting so far. This is out of the scope of the present paper.
\end{rem}

\section{Jackson- and Whitney-type theorems for vector-valued functions}
\label{sec:generalizedWhitney}

{In this section we prove Jackson- and Whitney-type theorems for functions defined on an interval, but valued on a Banach space. Some proofs are rather technical, and analogous to the ones presented for scalar-valued functions in \cite{Sto77, OS78}.
}

{Let us mention that regarding Jackson's theorem there is a proof for $1\le p \le \infty$, which is based on the $K$-functional method of interpolation~\cite[\S3.5]{PP87} and seems extendable to vector-valued functions.
There is an alternative proof in~\cite[Thm.~7.1]{PP87}, which holds for $0<p\le \infty$ and avoids all the technicalities from~\cite{Sto77, OS78}. However, it is based on a contradiction argument and does not work in the vector-valued case, or at least we could not generalize it to the infinite-dimensional setting.}

The proof of Whitney's theorem that we present below in Section~\ref{S:Whitney} follows the steps from~\cite[Sect.~6.1]{DeV98}. In order to do it, we need an equivalence of $L_p$-norms for vector-valued polynomials, which is contained in Lemma~\ref{lem-scaling-aux} and Corollary~\ref{cor-scaling-lpq}. After proving Whitney's estimate in $B^s_{q,q}(I,X)\cap L_p(I,X)$ in Proposition~\ref{prop-gen-whitney} we obtain the embedding $B^s_{q,q}(I,X) \subset L_p(I,X)$, and arrive at Whitney's estimate in $B^s_{q,q}(I,X)$.

\subsection{Jackson's estimate}

The goal of this section is to prove a Jackson-type estimate, which is stated below in~Theorem~\ref{thm-jackson} and requires some definitions.

Given a separable Banach space $X$, $r \in \N$, and an interval $I = [a,b)$, we denote by $\mathbb{V}^r_{I,X}$ the space of $X$-valued polynomials of order $r$ w.r.t.\ time, which we define as follows:
\beq \label{Vr}
	\mathbb{V}^r_{I,X}:=\bigg\{ P:I \to X , P(t)=\sum_{j=1}^{r}\ell_j^r(t) P_j: \ P_j\in X, \ t\in I\bigg\},
	\eeq
	with $\ell_j^r$ the usual (scalar-valued) Lagrange basis functions
	\beq \label{Lagrange}
\ell_j^r(t)=\prod_{i\neq j}\frac{t-t_i}{t_j-t_i}\qquad \text{for } t_j = a + (j-1) \frac{b-a}{r-1},
\qquad j=1,2,\dots,r.
\eeq
Notice that any basis for the space $\Pi^r$ of scalar-valued polynomials in $\real$, such as $1,t,t^2,\dots,t^{r-1}$, leads to the same space $\mathbb{V}^r_{I,X}$.

The main result of this section is the following.

\begin{thm}[Jackson's Theorem]\label{thm-jackson}
Let $0<p\le\infty$ and $r \in \N$. Then there exists a constant $c>0$ such that for any interval $I$ and every $f \in L_p(I,X)$, there exists a vector-valued polynomial $P_r \in \V_{I,X}^r$, which satisfies
\beq\label{Jackson0}
\|f-P_r\|^p_{L_p(I,X)}\leq c \, w_r(f,I,h)_p^p\qquad \text{with}\qquad h=\frac{|I|}{2r}.
\eeq
In other words, there exist $a_0$, $a_1$, \dots, $a_{r-1}\in X$ such that, if $P_r(t)=a_0+a_1 t+\ldots + a_{r-1}t^{r-1}$, then  \eqref{Jackson0}  holds.
\end{thm}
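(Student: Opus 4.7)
The plan is to adapt the Storozhenko-Oswald proof~\cite{Sto77, OS78} to the Banach-space-valued setting. The guiding observation is that since $X$ is a Banach space, pointwise manipulations of $f(t)$ in the scalar proof carry over to the vector case by replacing $|f(t)|$ with $\|f(t)\|_X$, with Bochner integration supplying the usual triangle inequalities in $X$. The only step requiring genuine extra care is the outer $L_p$-integration when $0 < p < 1$.

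First I would reduce to a reference interval via the scaling identity~\eqref{scaling}, which holds verbatim for $X$-valued functions (the substitution $\hat f(t) = f(a + t(b-a))$ commutes with $\|\cdot\|_X$). This lets us assume $I = [0, 1)$ and $h = 1/(2r)$, and we must produce $P_r \in \V^r_{I, X}$ of the form $a_0 + a_1 t + \cdots + a_{r-1} t^{r-1}$, $a_i \in X$, with $\|f - P_r\|_{L_p(I, X)}^p \lesssim w_r(f, I, h)_p^p$.

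Next I would construct $P_r$ via a two-step smoothing: first apply an iterated Steklov mean $S_h f$ defined through Bochner integrals, then let $P_r$ be its Taylor polynomial of order $r$ at some interior point. The Steklov mean is arranged so that $f - S_h f$ admits the pointwise bound
$$\|f(t) - (S_h f)(t)\|_X \le c \, \frac{1}{h^r} \int_{[0,h]^r} \bigl\|\Delta^r_{(u_1+\cdots+u_r)/r} f(t)\bigr\|_X \, du_1 \cdots du_r,$$
while $S_h f$ is strongly differentiable of order $r-1$, so that $S_h f - P_r$ can be controlled by its $(r-1)$-th strong derivative, which in turn is bounded by $r$-th differences of $f$ via arguments that only use Bochner integration and triangle inequalities in $X$. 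Combining the two yields a pointwise representation of $f(t) - P_r(t)$ as an average of $\|\Delta^r_u f\|_X$ with $0 < u \le h$.

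Finally I would take the $L_p$-norm of this identity. For $p \ge 1$, Minkowski's integral inequality in $L_p(I, X)$ and the substitution $u = (u_1 + \cdots + u_r)/r$ bring out $w_r(f, I, h)_p^p$ directly. For $0 < p < 1$, the triangle inequality must be replaced by the $p$-subadditivity $\|\sum_i g_i\|_{L_p}^p \le \sum_i \|g_i\|_{L_p}^p$, which forces the Steklov integrals to be discretized on a dyadic scale before the $p$-sum can be taken; this dyadic discretization is the main obstacle and is exactly where~\cite{Sto77, OS78} deploy the bulk of their piecewise-polynomial machinery. After $p$-summing, applying Fubini, and undoing the change of variables, one obtains $\|f - P_r\|_{L_p(I, X)}^p \lesssim \frac{1}{h}\int_0^h \|\Delta^r_u f\|_{L_p(I_{ru}, X)}^p \, du = w_r(f, I, h)_p^p$, completing the proof.
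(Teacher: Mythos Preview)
Your proposal takes a genuinely different route from the paper, and for $0<p<1$ it has a real gap.

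The paper does not use Steklov means or Taylor polynomials. Following~\cite{Sto77,OS78} closely, it works with the periodic extension $f^*$: three preparatory lemmas show that (i) the best constant approximant is controlled by averaged first-order periodic differences, (ii) periodic $k$-th differences are controlled by $(k{+}1)$-th ones, and (iii) periodic differences are controlled by non-periodic ones plus a single first-difference term. These combine into the key Lemma~\ref{aux-lem-OS78}: for each $m$ there is $a_0\in X$ with $\|f-a_0\|_{L_p}^p \lesssim w_m(f,I,h)_p^p + \|\Delta_h f\|_{L_p}^p$. The polynomial $P_r$ is then built \emph{recursively}, one coefficient at a time: $a_{r-1}$ is the best constant for $\Delta_h^{r-1}f$, one subtracts $a_{r-1}t^{r-1}$, repeats for $a_{r-2}$, and so on, applying Lemma~\ref{aux-lem-OS78} at each stage with $m$ decreasing from $2r$ down to $r+1$. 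Crucially, the constant $a_0$ in Lemma~\ref{aux-lem-OS78} is realized as a \emph{point value} $f(z)$ at a well-chosen $z$, not as a Bochner integral.

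Your Steklov-mean approach has two concrete problems for $p<1$. First, $f\in L_p(I,X)$ with $p<1$ need not lie in $L_1(I,X)$, so the Bochner integral defining $S_h f$ may fail to exist; the object you propose to Taylor-expand is in general undefined. Second, even granting local integrability, the step ``discretize on a dyadic scale and $p$-sum'' is not an argument: the pointwise inequality $\|f(t)-S_hf(t)\|_X \le (\text{average of }\|\Delta^r_u f(t)\|_X)$ goes the \emph{wrong} way under Jensen for the concave map $t\mapsto t^p$, and chopping the averaging integral into dyadic pieces does not recover $w_r(f,I,h)_p$ with a constant independent of the discretization. You also mischaracterize~\cite{Sto77,OS78}: they do not argue via Steklov means plus a dyadic patch, but precisely via the periodic-extension and recursive-coefficient construction the paper reproduces. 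For $p\ge 1$ a Steklov or $K$-functional route is indeed viable (the paper itself points to~\cite[\S3.5]{PP87} as an alternative in that range), but for the full range $0<p\le\infty$ in the vector-valued setting the constructive argument is what is needed.
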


Due to the homogeneity~\eqref{homogeneity} and the equivalence of Lemma~\ref{lem-equiv}, Jackson's estimate can also be stated as:
\beq\label{Jackson}
E_r(f,I)_p
:= \inf_{P_r \in \V_{I,X}^r} \|f-P_r\|^p_{L_p(I,X)}
\leq c \, w_r(f,I,|I|)_p^p, \qquad \forall f \in L_p(I,X).
\eeq

In order to prove this estimate, we need several auxiliary lemmas, {which are rather technical, and analogous to the ones proved} for scalar-valued functions in \cite{Sto77, OS78}. We generalize them to our setting.
The basic idea is to first study periodic functions and their higher order differences, and then relate them to differences of the functions we are actually interested in.

Let $f:[a,b)\rightarrow X$ be an $X$-valued function and
$f^{\ast}$ denote its periodic continuation with period $d:=b-a$, i.e.,
\[
f^{\ast}(t)= f(t-\ell d), \quad \text{where $\ell\in \Z$ is such that $t-\ell d\in [a,b)$}.
\]
Moreover,  for  $0<p<\infty$ and $k\in \nat$  consider the integrals
\begin{align}
    I^{\ast}_{p,k}(h)&:=\int_a^b \|\Delta^k_hf^{\ast}(t)\|_X^p\ud t =\int_0^d \|\Delta^k_hf^{\ast}(t)\|_X^p\ud t,\label{I_p_1}\\
     I_{p,k}(h)&:=\int_a^{b-kh} \|\Delta^k_hf(t)\|_X^p\ud t.\label{I_p_2}
\end{align}
Note that we do not emphasize on the fact that the expressions $I^{\ast}_{p,k}(h)$ and $I_{p,k}(h)$ also depend on the functions $f$ and $f^{\ast}$, respectively, since it will always be clear from the context which function we deal with.

We start with the following result showing how the best approximation of some function $f\in L_p(I,X)$ by a constant $a_0\in X$ can be bounded using first differences of its periodic continuation $f^{\ast}$.

\begin{lem}\label{lem-aux-1}
Let $0<p<\infty$ and $f\in L_p(I,X)$.
There exists $a_0\in X$ such that
\[
\|f-a_0\|^p_{L_p(I,X)}
\leq \frac 1d \int_0^d I^{\ast}_{p,1}(y)\ud y.
\]
\end{lem}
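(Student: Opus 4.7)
The plan is to massage the right-hand side via Fubini and the periodicity of $f^*$ into a symmetric double integral, and then pick $a_0$ by a mean-value argument.

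First, I would compute
\[
\frac 1d \int_0^d I^{\ast}_{p,1}(y)\ud y = \frac 1d \int_0^d\int_0^d \|f^*(t+y)-f^*(t)\|_X^p\,\ud t\,\ud y,
\]
and apply Fubini to interchange the order of integration. For fixed $t\in [0,d)$, substitute $s=t+y\pmod d$ in the inner integral: since $f^*$ is $d$-periodic, the substitution is a measure-preserving bijection of $[0,d)$ onto itself, and $f^*(t+y)=f^*(s)=f(s)$ for $s\in[0,d)$, while $f^*(t)=f(t)$. Thus
\[
\frac 1d \int_0^d I^{\ast}_{p,1}(y)\ud y = \frac 1d \int_a^b\int_a^b \|f(s)-f(t)\|_X^p\,\ud s\,\ud t,
\]
after translating $[0,d)$ back to $I=[a,b)$.

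Second, define $\phi:I\to[0,\infty]$ by $\phi(s):=\int_a^b\|f(t)-f(s)\|_X^p\,\ud t$. Then
\[
\frac 1d \int_a^b \phi(s)\,\ud s = \frac 1d \int_0^d I^{\ast}_{p,1}(y)\ud y,
\]
so $\phi\in L_1(I)$. By a standard averaging argument, the set
$E:=\{s\in I: \phi(s)\le \frac 1d\int_a^b \phi(\sigma)\,\ud\sigma\}$
has positive Lebesgue measure. Since $f$ is defined a.e.\ on $I$, we may choose $s_0\in E$ at which $f(s_0)$ is defined, and set $a_0:=f(s_0)\in X$. Then
\[
\|f-a_0\|_{L_p(I,X)}^p = \phi(s_0) \le \frac 1d \int_a^b \phi(s)\,\ud s = \frac 1d\int_0^d I^{\ast}_{p,1}(y)\ud y,
\]
which is the desired estimate.

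The main obstacle, if any, is the rigorous justification of the periodic change of variable: one should split $[0,d)$ as $[0,d-t)\cup[d-t,d)$ and use $f^*(t+y)=f(t+y)$ on the first piece and $f^*(t+y)=f(t+y-d)$ on the second. The mean-value step is elementary but is what allows the argument to work uniformly for $0<p<\infty$ (including $p<1$, where the Jensen-type choice $a_0=\frac1d\int_a^b f$ would fail for lack of a sub-additivity property of $\|\cdot\|_X^p$). No case distinction on $p$ is needed with the approach outlined above.
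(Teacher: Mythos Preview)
Your proposal is correct and follows essentially the same approach as the paper: both rewrite $\frac{1}{d}\int_0^d I^{\ast}_{p,1}(y)\,\ud y$ as the symmetric double integral $\frac{1}{d}\int_a^b\int_a^b\|f(t)-f(y)\|_X^p\,\ud t\,\ud y$, define the same auxiliary function $\phi(s)=\int_a^b\|f(t)-f(s)\|_X^p\,\ud t$ (called $g$ in the paper), and then set $a_0:=f(s_0)$ for a point $s_0$ where $\phi$ does not exceed its average. Your version is slightly more careful about the measure-theoretic justification (positive measure of $E$, choice of $s_0$ in the Lebesgue set of $f$), but the argument is the same.
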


\begin{proof} We show how  to construct $a_0\in X$ satisfying the desired inequality. Let $f^{\ast}$ denote the $d$-periodic continuation of $f$.  We make the following easy observation,
\begin{align*}
    \inf_{a_0\in X}\|f-a_0\|_{L_p(I,X)}^p
    &=\inf_{a_0\in X}\int_0^d \|f^{\ast}(t)-a_0\|_{X}^p\ud t\\
    &= \inf_{a_0\in X}\int_0^d \|f^{\ast}(t+y)-a_0\|_{X}^p\ud t\\
    &\leq \int_0^d \|f^{\ast}(t+y)-f^{\ast}(y)\|_{X}^p\ud t, \quad \text{for any $y \in [0,d)$}.
\end{align*}
Now using the fact that $f^{\ast}$ is $d$-periodic and the left-hand side does not depend on $y$, integration from $0$ to $d$ w.r.t. $y$ yields
\begin{align*}
    \inf_{a_0\in X}\|f-a_0\|_{L_p(I,X)}^p
    & \leq \frac 1d\int_0^{d}\int_0^{d}\|f^{\ast}(t+y)-f^{\ast}(y)\|_X^p\ud t\ud y\\
   & = \frac 1d\int_a^b\int_a^b\|f(t)-f(y)\|_X^p\ud t\ud y
   = \frac 1d \int_a^b g(y)\ud y,
\end{align*}
where in the last line we put $g(y):=\int_a^b\|f(t)-f(y)\|_X^p\ud t$. Note that the set   $S$ defined as
\[
S:=\Big\{ z\in [a,b): \ g(z)\leq \frac 1d \int_a^b g(y)\ud y\Big\},
\]
is non-empty.  Therefore,
 taking $z\in S$ and putting $a_0:=f(z)$ we obtain
\begin{align*}
   \|f-a_0\|^p_{L_p(I,X)}
   &= \int_a^b \|f(t)-f(z)\|_{X}^p \ud t =g(z)\\
   & \leq \frac 1d \int_a^b\int_a^b \|f(t)-f(y)\|_X^p\ud t\ud y \\
   & = \frac 1d\int_0^{d}\int_0^{d}\|f^{\ast}(t+y)-f^{\ast}(y)\|_X^p\ud t\ud y\\
   & = \frac 1d\int_0^{d}\int_0^{d}\|f^{\ast}(t+y)-f^{\ast}(y)\|_X^p\ud y\ud t\\
   &= \frac 1d \int_0^d I^{\ast}_{p,1}(t)\ud t,
\end{align*}
which shows that $a_0:=f(z)$ with $z\in S$ yields the assertion.
\end{proof}

The following lemma shows that we can bound integrals of lower order differences of periodic functions with integrals involving higher order differences.

\begin{lem}\label{lem-aux-2}
Let $0<p<\infty$ and $k\in \nat$.  Then we have the following relation
\[
\int_0^d I^{\ast}_{p,k}(y)\ud y\le c\, \int_0^d I^{\ast}_{p,k+1}(y)\ud y,
\]
with the constant $c>0$ only depending on $k$ and $p$, but otherwise independent of the function $f$ and the interval $[a,b)$.
\end{lem}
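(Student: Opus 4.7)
The plan is to establish a Marchaud-type identity expressing $2^k\Delta_h^k f^*$ as the sum of $\Delta_{2h}^k f^*$ and a linear combination of $(k+1)$-th order differences, and then to exploit the $d$-periodicity of both $f^*$ and of $h\mapsto I^*_{p,k}(h)$ to absorb the leftover $k$-th order term after integration in $h$.

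First, I would derive the identity from the operator factorization $\Delta_{2h} = \Delta_h(E_h+I)$, where $E_h f(t) = f(t+h)$ commutes with $\Delta_h$. Raising to the $k$-th power via the binomial theorem gives $\Delta_{2h}^k f^*(t) = \sum_{j=0}^k \binom{k}{j}\Delta_h^k f^*(t+jh)$. Subtracting $2^k \Delta_h^k f^*(t) = \sum_{j=0}^k \binom{k}{j}\Delta_h^k f^*(t)$ and telescoping via $\Delta_h^k f^*(t+jh) - \Delta_h^k f^*(t) = \sum_{i=0}^{j-1}\Delta_h^{k+1} f^*(t+ih)$ yields the Marchaud-type identity
$$
2^k \Delta_h^k f^*(t) = \Delta_{2h}^k f^*(t) - \sum_{j=1}^k \binom{k}{j}\sum_{i=0}^{j-1}\Delta_h^{k+1} f^*(t+ih).
$$

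Next, I would take $X$-norms and raise to the $p$-th power, using subadditivity of $x\mapsto x^p$ for $p\le 1$ or the elementary bound $|a+b|^p \le 2^{p-1}(|a|^p+|b|^p)$ for $p\ge 1$, and then integrate in $t$ over $[a,b)$. Because $f^*$ is $d$-periodic, the translated terms satisfy $\int_a^b \|\Delta_h^{k+1} f^*(t+ih)\|_X^p\, dt = I^*_{p,k+1}(h)$ for every $i$, yielding the pointwise-in-$h$ estimate
$$
2^{kp}\, I^*_{p,k}(h) \le C_p\, I^*_{p,k}(2h) + C'_{k,p}\, I^*_{p,k+1}(h),
$$
with $C_p=1$ if $p\le 1$ and $C_p = 2^{p-1}$ if $p\ge 1$.

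Finally, the map $h\mapsto I^*_{p,k}(h)$ is itself $d$-periodic because $\Delta_{h+d}^k f^* = \Delta_h^k f^*$, so the change of variables $u=2h$ gives $\int_0^d I^*_{p,k}(2h)\,dh = \frac12\int_0^{2d} I^*_{p,k}(u)\,du = \int_0^d I^*_{p,k}(u)\,du$. Integrating the previous inequality over $h\in[0,d]$ and rearranging produces
$$
(2^{kp} - C_p)\int_0^d I^*_{p,k}(h)\,dh \le C'_{k,p}\int_0^d I^*_{p,k+1}(h)\,dh,
$$
and the prefactor $2^{kp}-C_p$ is strictly positive for $k\ge 1$ and $p>0$ in both regimes of $p$ (since $kp>0$ when $p\le1$ and $p(k-1)>-1$ when $p\ge1$). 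The main obstacle is spotting the correct Marchaud-type identity that relates $\Delta_h^k$ to $\Delta_{2h}^k$ modulo $(k+1)$-th order differences; once that is in hand the rest reduces to periodicity bookkeeping and standard $p$-quasi-triangle manipulations, which go through verbatim in the $X$-valued setting.
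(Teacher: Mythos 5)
Your proof is correct and follows essentially the same route as the paper's: the Marchaud-type identity you derive from $\Delta_{2h}=\Delta_h(E_h+I)$ is exactly the identity \eqref{est-09a} that the paper cites from Timan, and the rest of the argument (integrate in $t$, use $d$-periodicity of $f^*$ and of $h\mapsto I^*_{p,k}(h)$ to absorb the $\Delta_{2h}^k$ term after integrating in $h$) matches the paper's. The only cosmetic difference is in the case $p\geq 1$: you apply the pointwise bound $|a+b|^p\leq 2^{p-1}(|a|^p+|b|^p)$ before integrating, whereas the paper applies Minkowski's inequality to the full $L_p$ integral in $(t,h)$ and raises to the $p$-th power at the very end; both give a strictly positive prefactor $2^{kp}-C_p$ resp.\ $(2^k-1)^p$ and the same conclusion.
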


\begin{proof}
We make use of the following identity
\beq\label{est-09a}
\Delta^k_{2y}f^{\ast}(t)-2^k\Delta^k_y f^{\ast}(t)=\sum_{i=1}^k{k\choose i}\sum_{m=0}^{i-1}\Delta_y^{k+1}f^{\ast}(t+my),
\eeq
which can be found in \cite[Sect. 3.3.2]{Tim63}. Let  $0<p<1$. In this case we know that $|\cdot |^p$ is subadditive. This and  integration from $0$ to $d$ w.r.t. $t$ in \eqref{est-09a} leads to
\begin{multline*}
2^{kp}\int_0^d \|\Delta_y^k f^{\ast}(t)\|_X^p \ud t-\int_0^d \|\Delta^k_{2y}f^{\ast}(t)\|_X^p\ud t
\\
\leq \sum_{i=1}^k{k\choose i}\sum_{m=0}^{i-1}\int_0^d \|\Delta_y^{k+1}f^{\ast}(t+my)\|_X^p\ud t.
\end{multline*}
Now integrating once more from $0$ to $d$ w.r.t. $y$ and using the definition of $I^{\ast}_{p,k}$ gives
\beq \label{est-08b}
2^{kp}\int_0^d I^{\ast}_{p,k}(y)\ud y-\int_0^d I_{p,k}^*(2y)\ud y
\leq \sum_{i=1}^k{k\choose i}\, i \int_0^d I_{p,k+1}^{\ast}(y)\ud y .
\eeq
Since $I^{\ast}_{p,k}(y)$ is $d$-periodic, we have the identity
$$\int_0^d I^{\ast}_{p,k}(2y)\ud y=\frac 12\int_0^{2d} I^{\ast}_{p,k}(y')\ud y'=\int_0^d I^{\ast}_{p,k}(y)\ud y. $$
Inserting this in  \eqref{est-08b} we obtain
\beq \label{est-08a}
\left(2^{kp}-1\right)\int_0^d I^{\ast}_{p,k}(y)\ud y
\leq c_{k,p}\int_0^d I_{p,k+1}^{\ast}(y)\ud y,
\eeq
which gives the desired estimate in the case $0<p<1$.
When $1\leq p<\infty$ we proceed with \eqref{est-09a} as follows: We add $2^k\Delta^k_y f^{\ast}(t)$ on both sides of \eqref{est-09a} and integrate from $0$ to $d$ w.r.t. $t$ and from $0$ to $d$ w.r.t. $y$ afterwards in the $L_p$-norm. This gives \eqref{est-08b} but with the integrals to the power $\frac 1p$. We proceed as before and end up with  \eqref{est-08a} to the power $\frac 1p$, which  proves the asserted estimate.
\end{proof}

The following lemma shows how to bound integrals of higher order differences of the periodic extension of a function by integrals of higher order differences of the original function plus first order differences.

\begin{lem} \label{lem-aux-3}
Let $f\in L_p(I,X)$, where $0<p<\infty$. Then for any $k\in \nat$ it holds
\[
\int_0^{\frac dk}I^{\ast}_{p,k}(y)\ud y\leq 2\int_0^{\frac dk}I_{p,k}(y)\ud y
+ c \, d \, I_{p,1}\left(\frac dk\right).
\]
with the constant $c>0$ only depending on $k$ and $p$, but otherwise independent of the function $f$ and the interval $[a,b)$.
\end{lem}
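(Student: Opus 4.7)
The strategy is to split $I^*_{p,k}(y) = I_{p,k}(y) + J(y)$, where
\[
J(y) := \int_{b-ky}^b \|\Delta_y^k f^*(t)\|_X^p\, dt
\]
is the contribution from the ``wrap'' region where the periodic extension $f^*$ picks up values $f(\cdot - d)$ instead of $f(\cdot)$. The first part directly accounts for the appearance of $I_{p,k}(y)$ on the right-hand side, so it remains to estimate $\int_0^{d/k} J(y)\, dy$.

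On the wrap region, partition $[b-ky, b) = \bigsqcup_{j=1}^k [b-jy, b-(j-1)y)$ and observe that on the $j$-th piece exactly the last $k-j+1$ shifted arguments lie in the next period. This gives the explicit representation
\[
\Delta_y^k f^*(t) = \sum_{i=0}^{j-1} \alpha_i\, f(t+iy) + \sum_{i=j}^{k} \alpha_i\, f(t+iy - d),\qquad \alpha_i = \binom{k}{i}(-1)^{k-i},
\]
i.e., a linear combination of values of $f$ near $b$ and near $a$.  I would decompose this as
\[
\Delta_y^k f^*(t) = \Delta_y^k f(s(t,y)) + R(t,y),
\]
where $s(t,y) \in [a, b-ky]$ is chosen so the shifted stencil fits inside $[a,b)$ (which is possible precisely because $y \leq d/k$), and $R(t,y)$ is a linear combination of first-order differences $\Delta_h^1 f(\cdot)$ at various scales $h \in (0, d]$.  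This is obtained telescopically by replacing each ``wrapped'' summand $f(t+iy - d)$ by the corresponding value in the stencil of $\Delta_y^k f(s(t,y))$, at the cost of a bounded number of first-order differences (a vector-valued version of the identity used in Lemma~\ref{lem-aux-2}).

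For the main piece $\int_0^{d/k}\!\int_{b-ky}^b \|\Delta_y^k f(s(t,y))\|^p dt\, dy$, I would apply Fubini and change variables $v = s(t,y)$ (with the constraint $y \leq (b-v)/k$) to obtain, after the routine calculation, exactly $\int_0^{d/k} I_{p,k}(y)\, dy$; together with the $I_{p,k}(y)$ from the initial split this produces the factor $2$.  The base case $k=1$ already exhibits the shape: a direct computation shows $\int_0^{d} J(y)\, dy = \int_0^{d} I_{p,1}(v)\, dv$, with no first-difference correction since $I_{p,1}(d) = 0$.  For the remainder $R(t,y)$, using the triangle inequality if $p \geq 1$ and the subadditivity of $\|\cdot\|_X^p$ if $0 < p < 1$, I would bound $\|R(t,y)\|^p$ by a constant (depending only on $k,p$) times a sum of $\|\Delta_h^1 f(\cdot)\|^p$ at scales $h \in (0,d]$; then the homogeneity~\eqref{homogeneity} applied to $r=1$ (together with the monotonicity of $\omega_1(f,I,\cdot)_p$) lets one replace these scales by $d/k$, so that summation over $j$ and integration over $y$ yields a bound $c\, d\, I_{p,1}(d/k)$.

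The main technical obstacle is the bookkeeping in the telescopic decomposition: choosing $s(t,y)$ and the precise first-difference telescope so that (a) the ``main'' piece $\Delta_y^k f(s(t,y))$ integrates back cleanly into $\int I_{p,k}(y)\, dy$ (mirroring the $k=1$ substitution), and (b) the several first-difference corrections can all be grouped and dominated by the single quantity $d\, I_{p,1}(d/k)$, with a constant independent of $f$ and of $|I|$. Handling $0<p<1$ forces the use of $p$-subadditivity throughout and multiplies the final constant, but introduces no conceptual new difficulty.
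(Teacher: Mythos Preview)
Your overall strategy—splitting off the wrap region and decomposing $S_j$ into a ``good'' $k$-th order difference plus first-order corrections—is exactly what the paper does. The crucial discrepancy is in \emph{which} $k$-th difference you pull out. You propose $\Delta_y^k f(s(t,y))$ at a shifted basepoint with the same step $y$; the paper instead uses $\Delta_{y-d/k}^k f(t)$ at the \emph{same} point $t$ but with step $y-d/k$. The paper's choice is what makes the argument close. Because $t+0\cdot(y-d/k)=t$ and $t+k(y-d/k)=t+ky-d$, the two extreme summands of $S_j$ and of $\Delta^k_{y-d/k}f(t)$ coincide automatically, so only the intermediate terms ($1\le i\le k-1$) require correction, and each correction is $f(\cdot)-f(\cdot+i\,d/k)$ or $f(\cdot)-f(\cdot+(k-i)\,d/k)$, i.e., a first difference whose step is an \emph{integer multiple} of $d/k$. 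A telescopic sum then rewrites each as at most $k-1$ first differences of step exactly $d/k$, and after Fubini every such term is dominated by $I_{p,1}(d/k)$ directly—no modulus-of-smoothness properties are invoked. The ``main'' piece gives $\sum_j\int_{b-jy}^{b-(j-1)y}\|\Delta^k_{y-d/k}f(t)\|_X^p\,\ud t=I_{p,k}(d/k-y)$, and the substitution $\int_0^{d/k}I_{p,k}(d/k-y)\,\ud y=\int_0^{d/k}I_{p,k}(y')\,\ud y'$ produces the second copy and hence the factor $2$.

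Your version, by contrast, leaves $s(t,y)$ unspecified, and the gap is real. Shifting the basepoint forces corrections on \emph{every} summand $i=0,\dots,k$, with step sizes of the form $|t-s|$ or $|t-s-d|$ that are generically not multiples of $d/k$; you then cannot reduce these to $I_{p,1}(d/k)$ via~\eqref{homogeneity}, since that inequality bounds the modulus at a \emph{larger} scale by the one at a smaller scale, not the reverse—and $I_{p,1}(h)$ is a single difference integral, not $\omega_1$, so monotonicity of $\omega_1$ does not apply to it either. (The identity from Lemma~\ref{lem-aux-2} that you cite is also not the one at play here.) Your scheme is salvageable if $s(t,y)$ is chosen so that every correction step lies in $\{d/k,2d/k,\dots,(k-1)d/k\}$, but the cleanest way to arrange this is precisely the paper's device of changing the step rather than the basepoint.
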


\begin{proof}
By definition $\Delta^k_y f^{\ast}(t)=\sum_{i=0}^k(-1)^{k-i}{k\choose i}f^{\ast}(t+iy)$ and the fact that  $f^{\ast}$ is the $d$-periodic continuation of $f$, i.e.,  $f=f^{\ast}$ on $[a,b)$ and $f^{\ast}(t)=f(t-d)$ for some $t\in [b,b+d)$, we express $I_{p,k}^{\ast}$ in terms of the values of $f$ as follows:
\begin{align}
    I^{\ast}_{p,k}(y)
    &=\int_a^b \|\Delta_y^k f^{\ast}(t)\|_X^p\ud t \notag\\
    &= \int_a^{b-ky} \|\Delta_y^k f(t)\|_X^p\ud t+\sum_{j=1}^k \int_{b-jy}^{b-(j-1)y}\|S_j\|_X^p\ud t, \qquad 0 \le y\leq \frac dk, \label{est-09}
\end{align}
where
\[
S_j(t)=\sum_{i=0}^{j-1}(-1)^{k-i}{k\choose i}f(t+iy)+\sum_{i=j}^k(-1)^{k-i}{k\choose i}f(t+iy-d).
\]
\begin{minipage}{\textwidth}
\includegraphics[width=12cm]{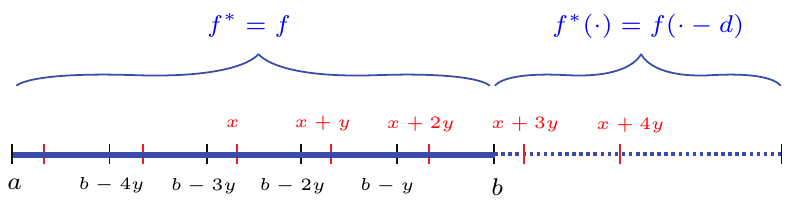}
\captionof{figure}{Express $f^{\ast}$ via $f$ with $j=3$}
\end{minipage}\\[0.2cm]
Now we transform $S_j$ as follows: we augment the first term of the first sum and the last of the second sum in order to obtain the value of the $k$-th difference of $f$ at the point $t$ with step $y-\frac dk$. This yields
\begin{align*}
    S_j&= \sum_{i=0}^k (-1)^{k-i}{k\choose i} \textcolor{blue}{f\left(t+i\Big(y-\frac dk\Big)\right) }
    \quad {\textcolor{Magenta}{\nearrow i=0 \text{ first sum \quad }}\atop \textcolor{Cyan}{\searrow i=k \text{ second sum}}} & (=:T_1)\\
    & \quad + \textcolor{Magenta}{\sum_{i=1}^{j-1}} (-1)^{k-i}{k\choose i} \left[f\left(t+iy\right)\textcolor{blue}{-f\left(t+i\Big(y-\frac dk\Big)\right)}\right]& (=:T_2(j))\\
    & \quad + \textcolor{Cyan}{\sum_{i=j}^{k-1}} (-1)^{k-i}{k\choose i} \left[f\left(t+iy-d\right)\textcolor{blue}{-f\left(t+i\Big(y-\frac dk\Big)\right)}\right] & (=:T_3(j))\\
    &=: T_1+T_2(j)+T_3(j).
\end{align*}
Since $T_1$ does not depend on $j$,
\begin{align}
\sum_{j=1}^k \int_{b-jy}^{b-(j-1)y}\|T_1\|_X^p \,\ud t
&=\int_{b-ky}^{b}\|T_1\|_X^p \ud t\notag
 =\int_{b-ky}^{b}\|\Delta^k_{y-\frac dk}f(t)\|_X^p \,\ud t\notag \\
&=\int_{a}^{a+ky}\|\Delta^k_{\frac dk-y}f(t)\|_X^p \ud t
=I_{p,k}\left(\frac dk-y\right),   \label{est-10}
\end{align}
where in the third step we changed the step $y-\frac dk$ involving the $k$-th difference of $f$ into $\frac dk-y$ in order to obtain a nonnegative step. We now estimate the sum
\begin{align}
    \sum_{j=1}^k &\int_{b-jy}^{b-(j-1)y}\|T_2(j)\|_X^p+\|T_3(j)\|_X^p \ud t\notag\\
    &\lesssim \sum_{j=1}^k\int_{b-jy}^{b-(j-1)y}  \left\{
    \sum_{i=1}^{j-1}{k\choose i}^p \Big\|f(t+iy)-f\left(t+i\Big(y-\frac dk\Big)\right)\Big\|_X^p\right.\notag\\
    & \qquad \qquad \qquad\qquad
    +  \left.\sum_{i=j}^{k-1}{k\choose i}^p \Big\|f(t+iy-d)-f\left(t+i\Big(y-\frac dk\Big)\right)\Big\|_X^p  \right\}\ud t\notag\\
    \intertext{\small \qquad \qquad   (change summation $\sum_{j=1}^k \sum_{i=1}^{j-1}=\sum_{i=1}^{k-1} \sum_{j=i+1}^{k}$
      and $ \sum_{j=1}^k \sum_{i=j}^{k-1}=\sum_{i=1}^{k-1} \sum_{j=1}^{i}$)}
    &=
    \sum_{i=1}^{k-1}{k\choose i}^p \int_{b-ky}^{b-iy}\left\|f(t+iy)-f\left(t+i\Big(y-\frac dk\Big)\right)\right\|_X^p\ud t\notag\\
     & \qquad \qquad \qquad\qquad \qquad
    +  \sum_{i=1}^{k-1}{k\choose i}^p \int_{b-iy}^b\left\|f(t+iy-d)-f\left(t+i\Big(y-\frac dk\Big)\right)\right\|_X^p \ud t \notag\\
    \intertext{\small \qquad \qquad \qquad (1st integral: Substitution $t'=t+i\left(y-\frac dk\right)$ ; reverse sum $i\mapsto k-i$)}
    \intertext{\small \qquad \qquad \qquad (2nd integral: Substitution $t''=t+iy-d$)}
     &=
    \sum_{i=1}^{k-1}{k\choose i}^p \int_{\frac{k-i}{k}a+\frac ikb-iy}^{\frac{k-i}{k}a+\frac ik b}\left\|f\left(t'+(k-i)\frac dk\right)-f\left(t'\right)\right\|_X^p\ud t'\notag\\
     & \qquad \qquad \qquad\qquad \qquad
    +  \sum_{i=1}^{k-1}{k\choose i}^p \int_{a}^{a+iy}\left\|f(t'')-f\left(t''+(k-i)\frac{d}{k}\right)\right\|_X^p \ud t''. \label{est-08}
\end{align}
Now \eqref{est-09}, \eqref{est-10}, and \eqref{est-08} yield
\begin{align*}
    I^{\ast}_{p,k}(y)
    \leq{}& I_{p,k}(y)+I_{p,k}\left(\frac dk-y\right) \\
    &   +\sum_{i=1}^{k-1}{k\choose i}^p \int_{\frac{k-i}{k}a+\frac ikb-iy}^{\frac{k-i}{k}a+\frac ik b}\left\|f\left(t'+(k-i)\frac dk\right)-f\left(t'\right)\right\|_X^p\ud t'\notag\\
     &
    +  \sum_{i=1}^{k-1}{k\choose i}^p \int_{a}^{a+iy}\left\|f(t'')-f\left(t''+(k-i)\frac{d}{k}\right)\right\|_X^p \ud t''.
\end{align*}
Integrating from $0$ to $\frac dk$ w.r.t. $y$ gives
\begin{align*}
    \int_0^{\frac dk}I^{\ast}_{p,k}(y)\ud y
    \le{}&  \int_0^{\frac dk}I_{p,k}(y)\ud y+ \int_0^{\frac dk}I_{p,k}\left(\frac dk-y\right)\ud y \\
    &  +c \left\{ \sum_{i=1}^{k-1}\int_0^{\frac dk} \int_{\frac{k-i}{k}a+\frac ikb-iy}^{\frac{k-i}{k}a+\frac ik b}\left\|f\left(t'+(k-i)\frac dk\right)-f\left(t'\right)\right\|_X^p\ud t' \ud y\right.\notag\\
     & \quad
    +  \left.\sum_{i=1}^{k-1}\int_0^{\frac dk} \int_{a}^{a+iy}\left\|f(t'')-f\left(t''+(k-i)\frac{d}{k}\right)\right\|_X^p \ud t''\ud y\right\}.
\end{align*}
\begin{minipage}{0.6\textwidth} We change the order of integration in the double integrals. For the second integral this yields
\[
\int_0^{\frac dk}\int_a^{a+iy}(\ldots)\ud t''\ud y
\longrightarrow \int_a^{a+i\frac dk}\int_{\frac{t''-a}{i}}^{\frac dk}(\ldots)\ud y \ud t''.
\]
Similarly for the first one. Moreover,
 observing that the integrand in both cases does not depend on $y$ we obtain
\end{minipage}\hfill \begin{minipage}{0.35\textwidth}
\includegraphics[width=6cm]{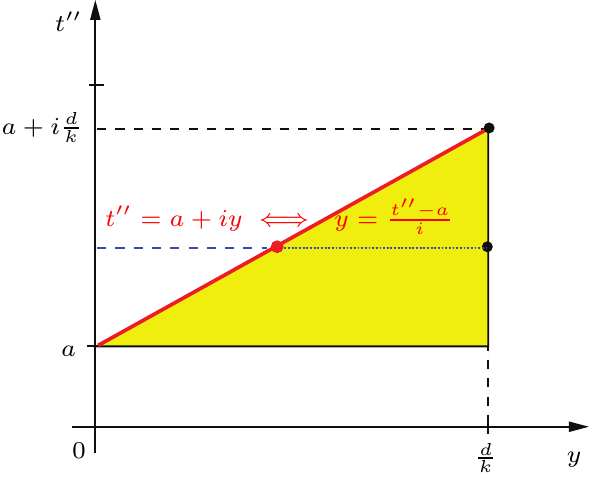}
\end{minipage}\\
\begin{align}
    \int_0^{\frac dk}&I^{\ast}_{p,k}(y)\ud y\notag\\
    &\le  2\int_0^{\frac dk}I_{p,k}(y)\ud y
     + c\bigg\{ \sum_{i=1}^{k-1} \int_a^{a+i\frac dk}\left(\frac{t'}{i}-\frac ai\right)\left\|f\left(t'+(k-i)\frac dk\right)-f\left(t'\right)\right\|_X^p\ud t' \notag\\
     & \qquad
    +  \sum_{i=1}^{k-1}\int_a^{a+i\frac dk}\left(\frac dk-\frac{t''-a}{i}\right)\left\|f(t'')-f\left(t''+(k-i)\frac{d}{k}\right)\right\|_X^p \ud t'' \bigg\} \notag\\
    &=  2\int_0^{\frac dk}I_{p,k}(y)\ud y
    + c \frac dk\sum_{i=1}^{k-1} \int_a^{a+i\frac dk}\left\|f(t)-f\left(t+(k-i)\frac{d}{k}\right)\right\|_X^p \ud t.\label{est-11}
\end{align}
Using a telescopic sum we see that
\begin{align*}
\Big\|f(t)-f\left(t+(k-i)\frac dk\right)\Big\|^p_X
\lesssim \sum_{j=1}^{k-i}\Big\|f\left(t+(j-1)\frac dk\right)-f\left(t+j \frac dk\right)\Big\|^p_X
\end{align*}
and for $i=1,2,\ldots, k-1$,
\begin{align}
   \sum_{j=1}^{k-i}&\int_a^{a+i\frac dk}\left\|f\left(t+(j-1)\frac dk\right)-f\left(t+j\frac dk\right)\right\|_X^p\ud t\notag\\
    &=\sum_{j=1}^{k-i}\int_{a+\frac{j-1}{k}d}^{a+\frac {i+j-1}{k}d}\left\|f\left(t'\right)-f\left(t'+\frac dk\right)\right\|_X^p\ud t'\notag\\
    &\leq (k-i) \int_{a}^{a+\frac{k-1}{k}d}\left\|f\left(t'\right)-f\left(t'+\frac dk\right)\right\|_X^p\ud t', \label{est-12}
\end{align}
where in the second step we used a change of variables $t':=t+(j-1)\frac{d}{k}$. Inserting \eqref{est-12} into \eqref{est-11} finally gives
\begin{align*}
    \int_0^{\frac dk}I^{\ast}_{p,k}(y)\ud y
    &\le 2\int_0^{\frac dk}I_{p,k}(y)\ud y
    + d \, c_{k,p} \int_{a}^{b-\frac dk}\left\|f\left(t'\right)-f\left(t'+\frac dk\right)\right\|_X^p\ud t'\\
    &=2\int_0^{\frac dk} I_{p,k}(y)\ud y
    + c_{k,p}\, d \, I_{p,1}\left(\frac dk\right),
\end{align*}
which completes the proof.
\end{proof}

The previous lemmas give the following result, which shows that we can bound the best  approximation of a  function $f\in L_p(I,X)$ by 
a constant $a_0\in X$ with the help of integrals of higher order differences and first order differences of $f$.

\begin{lem}\label{aux-lem-OS78}
Let $I=[a,b)$, $0<p<\infty$, and $m\in \nat$. There exists a constant $c=c_{m,p}$, such that for every $f\in L_p(I,X)$ there exists $a_0\in X$ satisfying, for $h=\frac{b-a}{m}$,
\begin{align}
\|f-a_0\|^p_{L_p(I,X)}
&\leq c \left[\frac 1h \int_0^h \int_a^{b-ms}\|\Delta_s^mf(t)\|_X^p\ud t\ud s+ \int_a^{b-h}\|\Delta_h f(t)\|_X^p\ud t\right]\label{aux-jackson}\\
&= c \left[w_m(f,I,h)_p^p+ \|\Delta_h f(t)\|^p_{L_p(I_h,X)}\right].\notag
\end{align}
\end{lem}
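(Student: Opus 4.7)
The plan is to compose Lemmas~\ref{lem-aux-1}, \ref{lem-aux-2}, and~\ref{lem-aux-3} in a chain. Writing $d=b-a$ and $h=d/m$, I would first invoke Lemma~\ref{lem-aux-1} to obtain $a_0\in X$ with $\|f-a_0\|^p_{L_p(I,X)}\le \frac 1d \int_0^d I^{\ast}_{p,1}(y)\,\ud y$. Next, I would iterate Lemma~\ref{lem-aux-2} $(m-1)$ times to pass from first-order differences of $f^{\ast}$ up to $m$-th order ones, obtaining $\int_0^d I^{\ast}_{p,1}(y)\,\ud y \le c_{m,p}\int_0^d I^{\ast}_{p,m}(y)\,\ud y$. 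Finally, Lemma~\ref{lem-aux-3} applied with $k=m$ delivers $\int_0^{h}I^{\ast}_{p,m}(y)\,\ud y\le 2\int_0^{h}I_{p,m}(y)\,\ud y+c\,d\,I_{p,1}(h)$, whose right-hand side already matches that of~\eqref{aux-jackson} after division by $h$.

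The only gap in this composition is a mismatch of ranges: Lemma~\ref{lem-aux-2} produces an integral over $[0,d]$, while Lemma~\ref{lem-aux-3} only controls the integral over $[0,d/m]$. Reconciling these is the main technical obstacle. The plan is to establish the periodic homogeneity
\[
I^{\ast}_{p,m}(my)\le c_{m,p}\, I^{\ast}_{p,m}(y),\qquad 0<y<d/m,
\]
which follows from the operator factorization $\Delta^1_{my}=\sum_{i=0}^{m-1}\tau_{iy}\,\Delta^1_y$ (where $\tau_k$ denotes translation by $k$). Since $\tau$ and $\Delta^1_y$ commute, iterating $m$ times expresses $\Delta^m_{my}$ as a sum of $m^m$ translates of $\Delta^m_y$. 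Invoking the $d$-periodicity of $f^{\ast}$ (so that the shifts $\tau_k$ preserve the $L_p(0,d)$-norm), together with the triangle inequality for $p\ge 1$ or the $p$-subadditivity for $p<1$, gives the claimed bound. The substitution $y\mapsto my$ then upgrades this pointwise estimate to
\[
\int_0^d I^{\ast}_{p,m}(y)\,\ud y \;=\; m\int_0^{d/m}I^{\ast}_{p,m}(my')\,\ud y' \;\le\; c_{m,p}\,m\int_0^{d/m}I^{\ast}_{p,m}(y)\,\ud y,
\]
which supplies the missing comparison.

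Assembling all the estimates with $h=d/m$, I obtain
\[
\|f-a_0\|^p_{L_p(I,X)}\;\le\; \frac{c_{m,p}}{d}\cdot m\left[\int_0^{h}I_{p,m}(y)\,\ud y+d\,I_{p,1}(h)\right]\;=\;c_{m,p}\left[\frac 1h \int_0^{h}I_{p,m}(y)\,\ud y+I_{p,1}(h)\right],
\]
which is~\eqref{aux-jackson} with a constant depending only on $m$ and $p$. The rewriting of the right-hand side as $w_m(f,I,h)_p^p+\|\Delta_h f\|^p_{L_p(I_h,X)}$ is then immediate from the definitions of the averaged modulus and of $I_{p,1}(h)$.
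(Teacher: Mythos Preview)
Your proposal is correct and follows essentially the same route as the paper's proof: invoke Lemma~\ref{lem-aux-1}, iterate Lemma~\ref{lem-aux-2} to reach order $m$, bridge the range mismatch via the periodic homogeneity $I^{\ast}_{p,m}(my)\lesssim I^{\ast}_{p,m}(y)$ (derived from the same translation identity for $\Delta_{my}^m$), and then apply Lemma~\ref{lem-aux-3} with $k=m$. The paper presents the same chain, writing the substitution as $y'=y/m$ rather than $y\mapsto my$, but the argument is identical.
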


\begin{rem}
Note that the second term with the first order differences in \eqref{aux-jackson} is crucial: If $f$ is a polynomial of degree $m-1$ the first integral on the right-hand side vanishes but the left-hand side might not.
\end{rem}

\begin{proof}
We first notice that, by induction, we can easily check that
\[
\Delta_{my}^m f^*(t) = \sum_{i_m=0}^{m-1} \dots \sum_{i_1=0}^{m-1} \Delta_y^m f^*(t+i_1y+\dots+i_my),
\]
so that $I_{p,m}^*(my) \lesssim I_{p,m}^*(y)$.
Taking $a_0\in X$ as constructed in  Lemma \ref{lem-aux-1} and using   Lemmas \ref{lem-aux-2} and \ref{lem-aux-3} with $k=m$, setting $h = \frac dm$, we obtain
\begin{align*}
     \|f-a_0\|_{L_p(I,X)}^p
    &\ \lesssim \frac 1d \int_0^d I^{\ast}_{p,1}(y)\ud y
    \lesssim \frac 1d \int_0^d I^{\ast}_{p,m}(y)\ud y \\
    &\ \lesssim \frac1d \int_0^d I_{p,m}^*\left(\frac ym\right) \ud y
    =\frac md \int_0^{\frac dm}I^{\ast}_{p,m}(y')\ud y'\\
    &\ \leq \frac md\left[2\int_0^{\frac dm}I_{p,m}(y)\ud y+c_{p,m}dI_{p,1}\left(\frac dm\right)\right]\\
    &\ =c'_{p,m} \frac 1h \left[ \int_0^{h}\int_a^{b-my}\|\Delta_y^m f \|_X^p\ud t\ud y+\int_a^{b-h}\|\Delta_h f(t)\|_X^p\ud t\right],
\end{align*}
which is the desired result.
\end{proof}

Finally, a repeated application of
Lemma \ref{aux-lem-OS78} 
now allows us to establish Jackson's inequality.

\begin{proof}[Proof of Theorem~\ref{thm-jackson}]
We assume $I = [0,1)$. The general case follows by scaling, using~\eqref{scaling}.
Let $h=\frac{1}{2r}$, $f\in L_p(I,X)$, and  denote the approximant $a_0\in X$ from Lemma \ref{aux-lem-OS78} by $M(f,I):=a_0$. Now
define the coefficients $a_0, \ldots, a_{r-1}$ recursively as follows:
\begin{align*}
    a_{r-1}&=M\left(\Delta^{r-1}_hf, [0,1-(r-1)h]\right)\frac{1}{h^{r-1}}\frac{1}{(r-1)!},\\
    f_1(t)&=f(t)-a_{r-1}t^{r-1}, \\
    a_{r-2}&=M(\Delta^{r-2}_hf_1, [0,1-(r-2)h])\frac{1}{h^{r-2}}\frac{1}{(r-2)!},\\
    f_2(t)&=f_1(t)-a_{r-2}t^{r-2}=f(t)-(a_{r-1}t^{r-1}+a_{r-2}t^{r-2}), \\
    \vdots & \\
    a_{2}&=M(\Delta^{2}_hf_{r-3}, [0,1-2h])\frac{1}{h^{2}}\frac{1}{2!},\\
    f_{r-2}(t)&=f_{r-3}(t)-a_{2}t^{2}=f(t)-(a_{r-1}t^{r-1}+a_{r-2}t^{r-2}+\ldots + a_2 t^2), \\
    a_{1}&=M(\Delta^{1}_hf_{r-2}, [0,1-h])\frac{1}{h},\\
     f_{r-1}(t)&=f_{r-2}(t)-a_{1}t=f(t)-(a_{r-1}t^{r-1}+a_{r-2}t^{r-2}+\ldots + a_1 t), \\
     a_0&=M(f_{r-1},[0,1]).
\end{align*}
With
\[
P_r(t)=\sum_{k=0}^{r-1}a_kt^k=a_0+a_1t+\ldots +a_{r-1}t^{r-1}
\]
we compute
\begin{align*}
    \|f-P_r\|^p_{L_{p}(I,X)}&=\|f_{r-1}-a_0\|^p_{L_p(I,X)}
    =\|f_{r-1}-M(f_{r-1},[0,1])\|^p_{L_p(I,X)}\\
    &\lesssim w_{2r}\left(f_{r-1},I,\frac{1}{2r}\right)_p^p+\|\Delta_h f_{r-1}\|_{L_p(I_h,X)}^p
    \intertext{\hfill (\text{which follows from applying Lem. \ref{aux-lem-OS78} with } $m=2r$)}
    &=w_{2r}\left(f,I,h\right)_p^p+\|\Delta_h f_{r-2}-a_1h\|_{L_p(I_h,X)}^p \\
    &=w_{2r}\left(f,I,h\right)_p^p+\|\Delta_h f_{r-2}-M(\Delta_h f_{r-2},[0,1-h])\|_{L_p(I_h,X)}^p \\
    & \lesssim w_{2r}\left(f,I,h\right)_p^p
    +w_{2r-1}\left(\Delta_h f_{r-2},[0,1-h],h\right)_p^p +\|\Delta^2_h f_{r-2}\|^p_{L_p(I_{2h},X)} \intertext{\hfill  (which follows from applying Lem. \ref{aux-lem-OS78} with $m=2r-1$)}
    & \lesssim
    w_{2r-1}\left(f,I,h\right)_p^p +\|\Delta^2_h f_{r-3}-a_2 2! h^2\|^p_{L_p(I_{2h},X)}
    \intertext{\hfill (we used~\eqref{inductivebound})}\\
    & =
    w_{2r-1}\left(f,I,h\right)_p^p +\|\Delta^2_h f_{r-3}-M(\Delta^2_hf_{r-3}, [0,1-2h])\|^p_{L_p(I_{2h},X)}\\
     & \lesssim
    w_{2r-2}\left(f,I,h\right)_p^p +\|\Delta^3_h f_{r-4}-a_3 3! h^3\|^p_{L_p(I_{3h},X)}
    \intertext{\hfill  (\text{which follows from applying Lem. \ref{aux-lem-OS78} with } $m=2r-2$)}
    & \qquad \vdots \\
     & \lesssim
    w_{r+2}\left(f,I,h\right)_p^p +\|\Delta^{r-1}_h f-a_{r-1} (r-1)! h^{r-1}\|^p_{L_p(I_{(r-1)h},X)}\\
    & \lesssim
    w_{r+1}\left(f,I,h\right)_p^p +\|\Delta^{r}_h f\|^p_{L_p(I_{rh},X)} \intertext{ \hfill (\text{which follows from applying Lem. \ref{aux-lem-OS78} with } $m=r+1$)}
    & \leq  w_{r+1}\left(f,I,h\right)_p^p+w_r(f,I,h)_p^p\\
     & \lesssim  w_{r}\left(f,I,h\right)_p^p,
\end{align*}
which proves the theorem.
\end{proof}

\begin{rem}
Note that Theorem \ref{thm-jackson} also holds for $p=\infty$: if we extend  \eqref{I_p_1} by
\[
I^{\ast}_{\infty,k}(h):=\sup_{t\in  [a,b)}\|\Delta^k_hf^{\ast}(t)\|_X=\sup_{t\in  [0,d) }\|\Delta^k_hf^{\ast}(t)\|_X,
\]
and similarly \eqref{I_p_2}, then Lemmas \ref{lem-aux-1}--\ref{aux-lem-OS78} can be extended to the case when $p=\infty$ by obvious modifications in the proofs, i.e., mostly replacing the integrals by  suprema.  In this case   the factors `$\frac 1d$' and `$d$' in Lemmas  \ref{lem-aux-1} and \ref{lem-aux-3}, respectively,  disappear.
\end{rem}

\subsection{Whitney's estimate}\label{S:Whitney}

Having established Jackson's estimate \eqref{Jackson} in Theorem~\ref{thm-jackson} we now proceed to prove Whitney's estimate.

\begin{thm}[Generalized Whitney's theorem]\label{thm-gen-whitney}
Let $0 < p , q \le \infty$, $r\in \nat$, and $s>0$.
	If  $\left( 1/q-1/p\right)_+ \le s < r$ then there exists a constant $c >0$ which depends only on $p$, $q$, $r$ such that
\begin{equation}\label{whitney}
E_r(f,I)_p=\inf_{P\in \mathbb{V}^r_{I,X}}\|f-P\|_{L_p(I, X)}\le c |I|^{s+\frac 1p-\frac 1q}|f|_{B^s_{q,q}(I,X)},
\end{equation}
for all $f \in B^s_{q,q}(I,X)$ and for any finite interval $I$.
\end{thm}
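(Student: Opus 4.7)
The plan is to follow the approach of~\cite[Sect.~6.1]{DeV98}, adapted to the vector-valued setting. By~\eqref{scaling} it suffices to prove~\eqref{whitney} on the reference interval $I = [0,1)$; the general case then follows by unscaling, which produces the factor $|I|^{s+1/p-1/q}$. \textbf{Easy case} $p \le q$: here $(1/q-1/p)_+ = 0$, so the hypothesis on $s$ reduces to $0 < s < r$; the embedding $L_q(I,X) \hookrightarrow L_p(I,X)$ (with constant $|I|^{1/p-1/q}$) yields $E_r(f,I)_p \le |I|^{1/p-1/q}E_r(f,I)_q$, and Jackson's theorem (Theorem~\ref{thm-jackson}) combined with Lemma~\ref{lem-equiv} and the discrete seminorm from Lemma~\ref{discrete-seminorm} gives $E_r(f,I)_q \lesssim \omega_r(f,I,|I|/(2r))_q \lesssim |I|^s|f|_{B^s_{q,q}(I,X)}$, establishing~\eqref{whitney} in this range.

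\textbf{Hard case} $p > q$. Now the hypothesis $s \ge 1/q - 1/p$ is binding, ensuring $s+1/p-1/q \ge 0$. The plan is a dyadic telescoping argument: for each $j \ge 0$, partition $I$ into the $2^j$ equal dyadic subintervals $\{I_{j,k}\}_k$; on each apply Jackson in $L_q$ to obtain a polynomial $P_{j,k} \in \V^r_{I_{j,k},X}$; glue them into a piecewise polynomial $g_j$ on $I$. The telescoping identity $g_J - g_0 = \sum_{j=1}^J (g_j - g_{j-1})$ reduces the estimate to controlling each $\|g_j - g_{j-1}\|_{L_p(I,X)}$. On each $I_{j,k}$ the vector-valued polynomial $L_p$--$L_q$ equivalence provided by Lemma~\ref{lem-scaling-aux} and Corollary~\ref{cor-scaling-lpq} converts the $L_p$-norm of $P_{j,k} - P_{j-1,\lfloor k/2\rfloor}$ into an $L_q$-norm weighted by $|I_{j,k}|^{1/p-1/q}$, after which Jackson bounds the resulting $L_q$-norms by the local $r$-th modulus of $f$. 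The discrete seminorm of Lemma~\ref{discrete-seminorm} assembles these contributions across $k$ for fixed $j$, and the condition $s \ge 1/q - 1/p$ is exactly what makes the geometric sum in $j$ converge, producing the stated bound.

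A subtlety is that the telescoping requires $g_J \to f$ in $L_p(I,X)$, which already needs $f \in L_p(I,X)$ a priori. Therefore~\eqref{whitney} is first proved for $f \in B^s_{q,q}(I,X)\cap L_p(I,X)$; the resulting inequality then yields the Sobolev-type embedding $B^s_{q,q}(I,X) \hookrightarrow L_p(I,X)$ (the byproduct announced before Section~\ref{S:Whitney}), and~\eqref{whitney} is extended by approximation to all of $B^s_{q,q}(I,X)$. The main obstacle is the hard case: proving the vector-valued polynomial $L_p$--$L_q$ inverse estimate (where finite-dimensionality of $\Pi^r$ cannot be invoked naively since $X$ is infinite-dimensional) and carrying out the double summation in $j,k$ with care in the quasi-normed regime when $q < 1$ or $p < 1$, where triangle inequalities must be replaced by $\min\{1,\cdot\}$-power versions as in~\eqref{homogeneity} and~\eqref{inductivebound}.
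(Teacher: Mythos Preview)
Your proposal is correct and follows essentially the same route as the paper: reduce to $I=[0,1)$ and to $q\le p$, build dyadic piecewise-polynomial approximants $g_j$ (the paper's $S_k$) via Jackson in $L_q$, telescope $f-g_0=\sum_k(g_k-g_{k+1})$ using the vector-valued polynomial $L_p$--$L_q$ comparison (Corollary~\ref{cor-scaling-lpq}) to pass from $L_p$ to $L_q$ on each dyadic piece, and sum with the discrete Besov seminorm; you also correctly identify the two-stage structure (first for $f\in B^s_{q,q}\cap L_p$, then the embedding of Corollary~\ref{cor-gen-whitney}, then the full statement) and the quasi-norm bookkeeping when $p<1$ or $q<1$.
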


Since this involves the $L_p$-norm on the left-hand side and an $L_q$-norm on the right-hand side, we first deal with the problem of how to switch from $p$-norms to $q$-norms for vector-valued polynomials. Using this together with the Jackson estimate, the fact that  according to Lemma \ref{discrete-seminorm} we can express the quasi-norm of the Besov spaces $B^s_{p,q}(I,X)$ as a discrete summation instead of integrals yields  Whitney's estimate.


\begin{lem}\label{lem-scaling-aux}
Let $0<p<\infty$ and $I=[0,1]$. On $\V^r_{I,X}$ the quasi-norm
\[
\|P\|_p:=\left(\int_0^1 \|P(t)\|_X^p\ud t\right)^{1/p}
\]
is equivalent to the norm
\[
\|P\|_{\ast}:=\max_{j=1,\ldots, r}\|P_j\|_X,
\qquad P_j = P(t_j), \quad t_j = \frac{j-1}{r-1}, \quad j=1,\dots,r.
\]
The constants involved in the equivalence depend on $r$ and $p$, but are otherwise independent of $P \in \V_{I,X}^r$.
\end{lem}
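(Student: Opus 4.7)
The plan is to prove the two inequalities separately. The upper bound $\|P\|_p \lesssim \|P\|_*$ will follow directly from the Lagrange expansion $P(t) = \sum_{j=1}^r \ell_j^r(t) P_j$ together with the (quasi-)triangle inequality in $X$. For $p \ge 1$ one uses the ordinary triangle inequality in $X$ and in $L_p([0,1])$ to get $\|P\|_p \le \big(\sum_j \|\ell_j^r\|_{L_p([0,1])}\big)\|P\|_*$, while for $0<p<1$ one appeals to the $p$-subadditivity of $\|\cdot\|_X^p$, integrates, and obtains $\|P\|_p^p \le \big(\sum_j \|\ell_j^r\|_{L_p([0,1])}^p\big) \|P\|_*^p$. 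In either case the constant is finite and depends only on $r$ and $p$ (the $\ell_j^r$ are fixed polynomials on $[0,1]$).

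For the nontrivial direction $\|P\|_* \lesssim \|P\|_p$ I would reduce to the scalar case via Hahn--Banach. Pick an index $j_0$ with $\|P_{j_0}\|_X = \|P\|_*$ and choose $\phi \in X^*$ of unit norm with $\phi(P_{j_0}) = \|P_{j_0}\|_X$. Set $q(t) := \phi(P(t)) = \sum_{j=1}^r \ell_j^r(t)\,\phi(P_j)$; then $q \in \Pi^r$, one has the pointwise bound $|q(t)| \le \|P(t)\|_X$ for all $t \in [0,1]$, hence $\|q\|_{L_p([0,1])} \le \|P\|_p$, while at the same time $|q(t_{j_0})| = \|P\|_*$ so that $\max_{j} |q(t_j)| \ge \|P\|_*$. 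It therefore suffices to establish the scalar inequality $\max_j |q(t_j)| \lesssim \|q\|_{L_p([0,1])}$ uniformly in $q \in \Pi^r$.

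The latter is a standard finite-dimensional fact. Because the interpolation nodes $t_1,\dots,t_r$ are distinct, Vandermonde invertibility shows that $q \mapsto \max_j |q(t_j)|$ is a genuine norm on $\Pi^r$. On the finite-dimensional space $\Pi^r$, the quasi-norm $q \mapsto \|q\|_{L_p([0,1])}$ is continuous and vanishes only at $q = 0$, so it attains a strictly positive minimum $c = c(r,p) > 0$ on the compact sphere $\{q \in \Pi^r : \max_j |q(t_j)| = 1\}$, which yields $\max_j |q(t_j)| \le c^{-1}\|q\|_{L_p([0,1])}$ for all $q \in \Pi^r$. Combining this with the previous paragraph gives $\|P\|_* \le c^{-1}\|P\|_p$.

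I do not foresee any substantial obstacle in this lemma; it is essentially a bookkeeping result. The only point requiring attention is the case $0<p<1$, where $\|\cdot\|_p$ is merely a quasi-norm, so the $p$-subadditivity $\|x+y\|_X^p \le \|x\|_X^p + \|y\|_X^p$ must be used consistently in place of the triangle inequality. The conceptual heart of the argument is the Hahn--Banach step, which converts the $X$-valued question into a scalar-valued one and thereby bypasses the need to work intrinsically in an infinite-dimensional setting.
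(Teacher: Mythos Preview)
Your proof is correct, but the route differs from the paper's. For the upper bound you and the paper do the same thing. For the lower bound, the paper works directly in $X$: fixing $i$ with $\|P_i\|_X=\|P\|_*$, it uses the reverse triangle inequality to get $\|P(t)\|_X \ge \|P\|_*\big(|\ell_i(t)|-\sum_{j\ne i}|\ell_j(t)|\big)$, then exploits continuity of the $\ell_j$ near $t_i$ to find a $\delta$-interval on which $\|P(t)\|_X \ge \tfrac12\|P\|_*$, and integrates. Your Hahn--Banach reduction is slicker: by pulling back through a norming functional you land in the genuinely finite-dimensional space $\Pi^r$, where norm equivalence is automatic by compactness. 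The paper even remarks that the lemma is not a finite-dimensional triviality because $\V^r_{I,X}$ is infinite-dimensional; your argument shows that one can nonetheless recover a finite-dimensional proof via duality. The trade-off is that the paper's argument is constructive (one can in principle track the constant through $\delta$), whereas yours invokes Hahn--Banach and a compactness minimum, so the constant is existential. Both are perfectly valid here; just note the trivial edge case $\|P\|_*=0$ (so $P\equiv 0$) when invoking the norming functional.
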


\begin{rem}
At first sight, it may seem that this lemma is obvious, because it looks like an equivalence of quasi-norms in a finite-dimensional space. But this is not the case, since the space $\V^r_{I,X}$ is not finite-dimensional, when $X$ is an arbitrary Banach space. \\
With slight modifications in the proof, Lemma \ref{lem-scaling-aux} also holds for $p=\infty$ and the quasi-norm
$
\|P\|_{\infty}=\sup_{t\in  [0,1]}\|P(t)\|_X.
$
\end{rem}

\begin{proof}
Let $\{ \ell_j \}_{j=1}^r$ denote the Lagrange basis of $\Pi^r$ corresponding to the equally spaced nodes $t_j = \frac{j-1}{r-1}$, $j=1,\dots,r$ on $[0,1]$, i.e.,
\[
\ell_j(t)=\prod_{i\neq j}\frac{t-t_i}{t_j-t_i}
, \quad \text{so that}\quad  \ell_j(t_i) = \delta_{ij}
\quad  \text{and}\quad
P = \sum_{j=1}^r P_j \ell_j, \text{ if $P \in \V_{I,X}^r$}.
\]
Obviously, for $P \in \V_{I,X}^r$,
\begin{align*}
    \|P\|_p&=\left(\int_0^1 \|P(t)\|_X^p\ud t\right)^{1/p}=\left(\int_0^1 \Big\|\sum_{j=1}^{r} \ell_j(t)P_j\Big\|_X^p\ud t\right)^{1/p}\\
    &\leq c_{p,r} \sum_{j=1}^{r}\left(\int_0^1 \ell_j^p(t) \|P_j\|_X^p\ud t\right)^{1/p}
     \leq c_{p,r}  \max_{j=1,\ldots, r}\|P_j\|_X = c_{p,r}  \|P\|_{\ast}.
\end{align*}
Let now $P = \sum_{j=1}^r P_j \ell_j \in \V_{I,X}^r$ and let $i$ be such that $\|P_i\|_X=\max_j \|P_j\|_X=\|P\|_{\ast}$. Then, for each $t \in I$, we have
\begin{align}
    \|P(t)\|_X
    &= \Big\|\sum_{j=1}^{r}\ell_j(t)P_j\Big\|_X \notag
    \geq |\ell_i(t)|\|P_i\|_X-\sum_{j\neq i}|\ell_j(t)|\|P_j\|_X
    \\
    &\geq \|P\|_{\ast}\bigg(|\ell_i(t)|-\sum_{j\neq i}|\ell_j(t)|\bigg). \label{est-lj}
\end{align}
Since at the point $t=t_i$ we have $\ell_i(t_i)=1$ and $\ell_j(t_i)=0$ for all $j\neq i$, there exists $\delta>0$ such that
\[
|t-t_i|<\delta \quad \Longrightarrow \quad |\ell_i(t)|>\frac 34>\frac 14>\sum_{j\neq i}|\ell_j(t)|;
\]
notice that $\delta > 0$ can be chosen independent of $i$, but will depend on $r$.
Hence,
\[
|\ell_i(t)|-\sum_{i\neq j}|\ell_j(t)|>\frac 12.
\]
Hence, \eqref{est-lj} gives us
\[
\|P(t)\|_X\geq \frac 12 \|P\|_{\ast}\qquad \text{for}\qquad |t-t_i|<\delta.
\]
Raising to the power $p$ and averaging over the interval $(t_i-\delta, t_i+\delta)\cap I$ yields
\[
\|P\|_{\ast}\leq  \left(\frac{2^{p}}{\delta}\int_{(t_i-\delta, t_i+\delta)\cap I}\|P(t)\|^p_X \ud t\right)^{1/p}\le \bar c_{p,r} \left(\int_{I}\|P(t)\|^p_X \ud t\right)^{1/p}= \bar c_{p,r}\|P\|_{p},
\]
and the assertion follows.
\end{proof}

By a scaling argument we obtain from the previous Lemma the following equivalence of $L_p(I,X)$ norms in $\V_{I,X}^r$ on an arbitrary interval $I$. The proof is very simple and is thus omitted.

\begin{cor}\label{cor-scaling-lpq}
Let $0<p,q\leq\infty$ and $r \in \N$. Then there exists a constant $c >0$ which depends only on $p$, $q$, $r$ such that on any finite interval $I$,
	\beq\label{scaling_lpq}
	\|P\|_{L_p(I,X)} \leq c |I|^{1/p-1/q}\|P\|_{L_q(I,X)},
	\qquad \forall P\in \mathbb{V}^r_{I,X}.
	\eeq
\end{cor}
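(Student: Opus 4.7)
The plan is to reduce to the reference interval $[0,1]$ by an affine change of variables, and then apply Lemma~\ref{lem-scaling-aux} twice to transfer between the $L_p$ and $L_q$ norms through the auxiliary norm $\|\cdot\|_\ast$.

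More precisely, given a finite interval $I=[a,b)$ and $P\in\V_{I,X}^r$, I would set $\hat P(t) := P(a+t(b-a))$ for $t\in[0,1]$. Since the map $P\mapsto \hat P$ is a bijection from $\V_{I,X}^r$ onto $\V_{[0,1],X}^r$ (the Lagrange nodes $t_j=a+(j-1)(b-a)/(r-1)$ pull back to the equispaced nodes on $[0,1]$), the values of $\hat P$ at the reference nodes are simply $P_j=P(t_j)$, so the quasi-norm $\|\hat P\|_\ast$ from Lemma~\ref{lem-scaling-aux} coincides with $\max_{j}\|P_j\|_X$, which is intrinsic to $P$ and independent of the interval. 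A change of variables in the integrals yields
\[
\|P\|_{L_p(I,X)} = |I|^{1/p}\,\|\hat P\|_{L_p([0,1],X)}, \qquad \|P\|_{L_q(I,X)} = |I|^{1/q}\,\|\hat P\|_{L_q([0,1],X)},
\]
with the usual interpretation when $p$ or $q$ equals $\infty$.

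Now I would apply Lemma~\ref{lem-scaling-aux} (extended to $p=\infty$ as indicated in the remark there) on the reference interval: it gives $\|\hat P\|_{L_p([0,1],X)} \lesssim \|\hat P\|_\ast \lesssim \|\hat P\|_{L_q([0,1],X)}$, with constants depending only on $p$, $q$, and $r$. Combining these three ingredients,
\[
\|P\|_{L_p(I,X)} = |I|^{1/p}\,\|\hat P\|_{L_p([0,1],X)} \lesssim |I|^{1/p}\,\|\hat P\|_{L_q([0,1],X)} = |I|^{1/p-1/q}\,\|P\|_{L_q(I,X)},
\]
which is the desired bound. No step is really an obstacle; the only subtlety worth a line of comment is that the equivalence on $[0,1]$ is \emph{not} a trivial finite-dimensional fact, since $X$ may be infinite-dimensional, but this work has already been done in Lemma~\ref{lem-scaling-aux}.
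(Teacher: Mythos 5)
Your proof is correct and is essentially the same scaling argument the paper has in mind (and in fact sketches in a commented-out block): rescale to $[0,1)$, invoke Lemma~\ref{lem-scaling-aux} to pass from $L_p$ to $L_q$ via $\|\cdot\|_\ast$, and rescale back. The only extra care you take, rightly, is to note the intrinsic identification of the nodal values under the affine map and to invoke the $p=\infty$ extension mentioned in the remark following Lemma~\ref{lem-scaling-aux}.
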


Following the steps from~\cite[Sec.~6.1]{DeV98} we can now prove Whitney's estimate in $B^s_{q,q}(I,X)\cap L_p(I,X)$.

\begin{prop}\label{prop-gen-whitney}
Let $0 < p , q \le \infty$, $r\in \nat$, and $s>0$.
	If  $\left( 1/q-1/p\right)_+ \le s < r$ then there exists a constant $c >0$ which depends only on $p$, $q$, $r$ such that
\begin{equation}\label{whitney0}
E_r(f,I)_p:=\inf_{P\in \mathbb{V}^r_{I,X}}\|f-P\|_{L_p(I, X)}\le c |I|^{s+\frac 1p-\frac 1q}|f|_{B^s_{q,q}(I,X)},
\end{equation}
for all $f \in B^s_{q,q}(I,X)\cap L_p(I,X)$ and for any finite interval $I$.
\end{prop}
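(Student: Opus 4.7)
The plan is to mimic the strategy of \cite[Sect.~6.1]{DeV98}, treating the two ranges of exponents separately. When $p\le q$, Theorem~\ref{thm-jackson} produces $P\in\V^r_{I,X}$ with $\|f-P\|_{L_q(I,X)}\lesssim\omega_r(f,I,|I|)_q$; extracting a single term from the discretization in Lemma~\ref{discrete-seminorm} and invoking the homogeneity~\eqref{homogeneity} gives $\omega_r(f,I,|I|)_q\lesssim |I|^s|f|_{B^s_{q,q}(I,X)}$; since $I$ is finite, H\"older yields the embedding $L_q(I,X)\hookrightarrow L_p(I,X)$ with constant $|I|^{1/p-1/q}$, and combining these three ingredients delivers~\eqref{whitney0} in this range.

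When $p>q$, the hypothesis forces $s\ge 1/q-1/p>0$, and I would use a dyadic construction. For $k\ge 0$ let $\{I_k^j\}_{j=1}^{2^k}$ be the uniform subdivision of $I$ into $2^k$ intervals of length $|I|/2^k$, and let $P_k:I\to X$ be the piecewise polynomial whose restriction to each $I_k^j$ is the $L_q$-Jackson polynomial supplied by Theorem~\ref{thm-jackson}. The central estimate to establish is
\[
\|P_k-P_{k-1}\|_{L_p(I,X)}\;\lesssim\;(|I|/2^k)^{1/p-1/q}\,\omega_r(f,I,c|I|/2^{k-1})_q.
\]
This follows by applying the inverse-type Corollary~\ref{cor-scaling-lpq} on each $I_k^j$ to the polynomial $P_k-P_{k-1}$ at the cost of a factor $(|I|/2^k)^{1/p-1/q}$; then bounding $\|P_k-P_{k-1}\|_{L_q(I_k^j,X)}^q$ by $\|f-P_k\|_{L_q(I_k^j,X)}^q+\|f-P_{k-1}\|_{L_q(I_k^j,X)}^q$ via a quasi-triangle inequality (the second term being the error on a sub-piece of the parent interval on which $P_{k-1}$ is a single polynomial); summing over $j$ and exploiting the disjointness bound $\sum_j\|\Delta_h^rf\|_{L_q((I_k^j)_{rh},X)}^q\le\|\Delta_h^rf\|_{L_q(I_{rh},X)}^q$ together with Lemma~\ref{lem-equiv} to recover $\omega_r(f,I,\cdot)_q$ on the whole of $I$; and finally invoking the embedding $\ell_q\hookrightarrow\ell_p$ (valid for $p\ge q$) to pass from the $\ell_q$-sum of $L_p$-norms over the subintervals to $\|\cdot\|_{L_p(I,X)}$.

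With the displayed estimate in hand, I would show that $\{P_k\}$ is Cauchy in $L_p(I,X)$ by summing $\|P_k-P_{k-1}\|_{L_p(I,X)}^{\min(1,p)}$: using the discrete Besov seminorm from Lemma~\ref{discrete-seminorm} together with H\"older's inequality (when $q\ge 1$) or the quasi-norm embedding $\ell_q\hookrightarrow\ell_1$ (when $q\le 1$), this sum is bounded by $\big(|I|^{s+1/p-1/q}|f|_{B^s_{q,q}(I,X)}\big)^{\min(1,p)}$ provided $s+1/p-1/q>0$. Since $P_k\to f$ in $L_q(I,X)$ by construction and the embedding $L_p(I,X)\hookrightarrow L_q(I,X)$ identifies the $L_p$-limit with $f$, the telescoping identity $f-P_0=\sum_{k\ge1}(P_k-P_{k-1})$ holds in $L_p(I,X)$, and the desired bound~\eqref{whitney0} follows. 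The main obstacle is the critical case $s=1/q-1/p$ with $q>1$, where the geometric series is logarithmically borderline and the direct H\"older argument fails; this is precisely where the assumption $f\in L_p(I,X)$ becomes essential, as it allows one to refine the telescoping via the full $\ell_q$-structure of the Besov seminorm rather than bounding term by term.
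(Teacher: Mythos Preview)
Your approach coincides with the paper's. The paper reduces to $q\le p$ by the one-line observation $E_r(f,I)_p\le E_r(f,I)_q$ on a finite interval (equivalent to your Jackson-in-$L_q$ plus H\"older for $p\le q$), then for $q\le p$ builds dyadic piecewise Jackson approximants $S_k$ (your $P_k$), applies Corollary~\ref{cor-scaling-lpq} on each subinterval, and telescopes. The paper writes $\|f-S_0\|_{L_p(I,X)}^{\overline{p}}\le\sum_{k\ge0}\|S_k-S_{k+1}\|_{L_p(I,X)}^{\overline{p}}$ directly, with $\overline{p}=\min(1,p)$, citing the hypothesis $f\in L_p(I,X)$ at that point; your Cauchy-then-identify-the-limit route is an equivalent packaging of the same step.

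The one place your outline is not a proof is exactly the endpoint $s=1/q-1/p$ with $q>\overline{p}$ (i.e.\ $p>q>1$). Your claim that $f\in L_p(I,X)$ ``allows one to refine the telescoping via the full $\ell_q$-structure'' is not a concrete argument: knowing $f\in L_p$ a priori only guarantees $\|f-P_0\|_{L_p}<\infty$; it does not help you bound $\sum_k\big(2^{ks}w_r(f,I,2^{-k})_q\big)^{\overline{p}}$ by the $\ell_q$-sum when $q>\overline{p}$. For what it is worth, the paper at this same step says ``for $q>\overline{p}$ we apply H\"older's inequality with $q/\overline{p}>1$'', and that H\"older step likewise needs $\delta:=s+1/p-1/q>0$ for the conjugate series $\sum_k 2^{-k\delta\overline{p}\,q/(q-\overline{p})}$ to converge; so the endpoint is not visibly closed there either. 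Your diagnosis of the obstacle is therefore accurate, but the remedy you sketch is not one.
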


\begin{proof}
Since $E_{r+1}(f,I)_p \le E_r(f,I)_p$, it is sufficient to prove the result in the case $r = \lfloor s \rfloor + 1$, and by scaling it is sufficient to consider  $I = [0,1)$.
Also, since $E_r(f,I)_p \le E_r(f,I)_q$ when $p < q$, it is sufficient to consider the case $q\le p$.

Let $D_k$ for $k=0,1,2,\ldots$ denote the following dyadic partitions of $I$:
\[
D_k:=\{
I_k^j:=2^{-k}[j-1,j), \ j=1,\ldots, 2^{k}
\}.
\]
We let $S_k$ denote a piecewise polynomial function of order $r$ on the partition $D_k$ satisfying the Jackson estimate \eqref{Jackson} with $p$ replaced by $q$, in each sub-interval, i.e.,
\[
\|f-S_k\|_{L_q(I^j_k,X)}\lesssim w_r(f,I_k^j,2^{-k})_q,
\qquad j=1,2,\dots,2^k, \quad k=0,1,\dots,
\]
whence $S_0 \in \V_{I,X}^r$.

Then, on the one hand, we have
\begin{align*}
    \|f-S_k\|^q_{L_q(I,X)}
    &=\sum_{j=1}^{2^k}\|f-S_k\|^q_{L_q(I^j_k,X)}\lesssim \sum_{j=1}^{2^k}w_r(f,I_k^j,2^{-k})_q^q.
\end{align*}
Denoting $\tilde{I}^j_k = \big( I_k^j \big)_{rh}$ we obtain
\begin{align}
    \|f-S_k\|^q_{L_q(I,X)}
    &\lesssim \frac{1}{2^{-k}}\int_0^{2^{-k}}\sum_{j=1}^{2^k} \|\Delta_h^r f\|^q_{L_q(\tilde{I}^j_k,X)}\ud h\notag\\
    &=\frac{1}{2^{-k}}\int_0^{2^{-k}} \sum_{j=1}^{2^k}\int_{\tilde{I}^j_k} \|\Delta_h^r f(t)\|_X^q \ud t\ud h\notag\\
    & \leq \frac{1}{2^{-k}}\int_0^{2^{-k}} \int_{[0,1-rh]}\|\Delta_h^r f(t)\|_X^q \ud t\ud h\notag\\
    &= \frac{1}{2^{-k}}\int_0^{2^{-k}} \|\Delta_h^r f\|^q_{L_q([0,1-rh],X)}\ud h 
    =w_r(f,I, 2^{-k})_q^q. \label{est-03}
\end{align}

On the other hand, using \eqref{scaling_lpq} in each subinterval $I_{k+1}^j$, we have
\begin{align}
    \|S_k-S_{k+1}\|^p_{L_p(I,X)}
    &=\sum_{j=1}^{2^{k+1}} \|S_k-S_{k+1}\|^p_{L_p(I_{k+1}^j,X)}\notag\\
    &\lesssim 2^{-k\left(1-\frac pq\right)}\sum_{j=1}^{2^{k+1}}\|S_k-S_{k+1}\|^p_{L_q(I_{k+1}^j,X)}\notag\\
    &\lesssim 2^{-k\left(1-\frac pq\right)}\left(\sum_{j=1}^{2^{k+1}}\|S_k-S_{k+1}\|^q_{L_q(I_{k+1}^j,X)}\right)^{p/q}\notag\\
    &=2^{-k\left(1-\frac pq\right)}\|S_k-S_{k+1}\|^p_{L_q(I,X)},\label{est-05}
\end{align}
where
in the second to last line we used the fact that $\ell_{q/p}\hookrightarrow \ell_1$ for $q\le p$. This yields for  $\overline{p}=\min\{1,p\}$,
\beq\label{est_02}
  \|S_k-S_{k+1}\|^{\overline{p}}_{L_p(I,X)}\lesssim 2^{-k\left(\frac 1p-\frac 1q\right)\overline{p}}\|S_k-S_{k+1}\|^{\overline{p}}_{L_q(I,X)}.
\eeq
But then using \eqref{est-03}, \eqref{est_02}, and the assumption that $f \in L_p(I,X)$, we obtain
\begin{align}
    E_r(f,I)_p^{\overline{p}}
    &\le \|f-S_0\|^{\overline{p}}_{L_p(I,X)} 
    \leq \sum_{k=0}^{\infty} \|S_k-S_{k+1}\|^{\overline{p}}_{L_p(I,X)}\notag\\
    &\lesssim \sum_{k=0}^{\infty}2^{-k\left(\frac 1p-\frac 1q\right)\overline{p}}\|S_k-S_{k+1}\|^{\overline{p}}_{L_q(I,X)}\notag\\
    &\leq \sum_{k=0}^{\infty}2^{-k\left(\frac 1p-\frac 1q\right)\overline{p}}\left(\|S_k-f\|^{\overline{p}}_{L_q(I,X)}+\|f-S_{k+1}\|^{\overline{p}}_{L_q(I,X)}\right)\notag\\
    &\lesssim \sum_{k=0}^{\infty}2^{-k\left(\frac 1p-\frac 1q\right)\overline{p}}\|f-S_k\|^{\overline{p}}_{L_q(I,X)}\notag\\
    &\lesssim \sum_{k=0}^{\infty}2^{-k\left(\frac 1p-\frac 1q\right)\overline{p}}w_r(f,I,2^{-k})_q^{\overline{p}}\notag\\
    &=  \sum_{k=0}^{\infty}2^{-k\left(\left(\frac 1p-\frac 1q\right)+s\right)\overline{p}}2^{{ks\overline{p}} }w_r(f,I,2^{-k})_q^{\overline{p}}\notag\\
    &\leq   \sum_{k=0}^{\infty}2^{-k\delta\overline{p}}2^{{ks\overline{p}} }w_r(f,I,2^{-k})_q^{\overline{p}}, \label{est-04}
\end{align}
where $\delta:=\left(\frac 1p-\frac 1q\right)+s\geq 0$ due to 
our  assumption $s\geq \left(1/q-1/p\right)_+$. 
In \eqref{est-04} we proceed as follows: if $q<\overline{p}$ we make use of the embedding $\ell_q\hookrightarrow \ell_{\overline{p}}$ together with the fact that $2^{-k\delta \overline{p}}\leq 1$ and for $q>\overline{p}$ we apply  H\"older's inequality with $\frac{q}{\overline{p}}>1$. This  finally gives
\beq \label{est-final}
    \|f-S_0\|_{L_p(I,X)}
    \leq \left(\sum_{k=0}^{\infty}2^{{ksq} }w_r(f,2^{-k},I)_q^{q}\right)^{1/{q}}
    \simeq |f|_{B^s_{q,q}(I,X)}.
\eeq
The assertion thus follows by recalling that $S_0 \in \V_{I,X}^r$.
\end{proof}

As a consequence of the previous theorem we have that under  the same  assumptions $B^s_{q,q}(I,X)$ is embedded into $L_p(I,X)$.

\begin{cor}\label{cor-gen-whitney}
Let $0<p,q \le \infty$, $r\in \nat$, and $s>0$.
	If  $\left( 1/q-1/p\right)_+ \le s$ then $B^s_{q,q}(I,X)$ is embedded into $L_p(I,X)$ and there exists a constant $c >0$ which depends only on $p$, $q$, $r$, and $s$ such that
\begin{equation*}
\|f\|_{L_p(I, X)}\le c \|f\|_{B^s_{q,q}(I,X)},
\end{equation*}
for all $f \in B^s_{q,q}(I,X)$ and for any finite interval $I$.
\end{cor}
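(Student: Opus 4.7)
The strategy is to split into two cases according to whether $q\ge p$ or $q<p$, and in the latter (harder) case to reuse the construction from the proof of Proposition~\ref{prop-gen-whitney} without assuming a priori that $f\in L_p(I,X)$.

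If $q\ge p$, then the condition $(1/q-1/p)_+\le s$ is automatic, and since $|I|<\infty$, H\"older's inequality gives $\|f\|_{L_p(I,X)}\le |I|^{1/p-1/q}\|f\|_{L_q(I,X)}\le |I|^{1/p-1/q}\|f\|_{B^s_{q,q}(I,X)}$, where the last step uses the trivial inclusion of $B^s_{q,q}(I,X)$ into $L_q(I,X)$ from~\eqref{norm-B}. This settles the easy case.

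For $q<p$, the hypothesis reads $s\ge 1/q-1/p\ge 0$. The plan is to reconstruct the sequence $(S_k)_{k\ge 0}$ from the proof of Proposition~\ref{prop-gen-whitney}: on each dyadic partition $D_k$ of $I$ take $S_k$ to be a piecewise polynomial of order $r$ satisfying the Jackson estimate~\eqref{Jackson} (with $p$ replaced by $q$) on every sub-interval. Each $S_k$ lies in $L_p(I,X)$, being a finite collection of $X$-valued polynomials on bounded sub-intervals (together with Lemma~\ref{lem-scaling-aux}). Exactly as in~\eqref{est-05}--\eqref{est_02}, Corollary~\ref{cor-scaling-lpq} and the embedding $\ell_{q/p}\hookrightarrow\ell_1$ (valid because $q\le p$) yield, with $\bar p=\min\{1,p\}$,
\[
\|S_k-S_{k+1}\|^{\bar p}_{L_p(I,X)}\lesssim 2^{-k(1/p-1/q)\bar p}\|S_k-S_{k+1}\|^{\bar p}_{L_q(I,X)},
\]
and bounding the right-hand side via~\eqref{est-03} plus $\ell_q\hookrightarrow\ell_{\bar p}$ or H\"older, exactly as in the deduction of~\eqref{est-final}, gives
\[
\sum_{k=0}^{\infty}\|S_k-S_{k+1}\|^{\bar p}_{L_p(I,X)}\lesssim |f|^{\bar p}_{B^s_{q,q}(I,X)},
\]
where the condition $s\ge 1/q-1/p$ is precisely what makes this sum finite.

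Hence $(S_k)$ is Cauchy in $L_p(I,X)$, so it converges in $L_p(I,X)$ to some $g$. Because $|I|<\infty$ and $q<p$ we have $L_p(I,X)\hookrightarrow L_q(I,X)$, so $S_k\to g$ in $L_q(I,X)$ as well; on the other hand the Jackson estimate~\eqref{est-03} shows $S_k\to f$ in $L_q(I,X)$, so $g=f$ a.e.\ and in particular $f\in L_p(I,X)$. Finally,
\[
\|f\|_{L_p(I,X)}\le \|S_0\|_{L_p(I,X)}+\Bigl(\sum_{k=0}^{\infty}\|S_k-S_{k+1}\|^{\bar p}_{L_p(I,X)}\Bigr)^{1/\bar p},
\]
and $\|S_0\|_{L_p(I,X)}\lesssim |I|^{1/p-1/q}\|S_0\|_{L_q(I,X)}\lesssim \|f\|_{L_q(I,X)}+|f|_{B^s_{q,q}(I,X)}\lesssim \|f\|_{B^s_{q,q}(I,X)}$ by Corollary~\ref{cor-scaling-lpq} and Jackson's inequality, so the required embedding follows. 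The main delicate point is that membership $f\in L_p(I,X)$ is not known a priori, and is recovered precisely by identifying the $L_p$-limit of the Jackson approximants with $f$ using the cheaper $L_q$-convergence.
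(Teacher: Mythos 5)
Your proof is correct and, importantly, takes a genuinely different route from the paper. The paper's own proof first derives the estimate $\|f\|_{L_p(I,X)} \lesssim \|f\|_{B^s_{q,q}(I,X)}$ for $f$ \emph{a priori} in $B^s_{q,q}(I,X) \cap L_p(I,X)$ (using Proposition~\ref{prop-gen-whitney} to control $\|f-S_0\|_{L_p}$ and Corollary~\ref{cor-scaling-lpq} plus Jackson to control $\|S_0\|_{L_p}$), and then closes the argument with the one-line assertion that $B^s_{q,q}(I,X)\cap L_p(I,X)$ is dense in $B^s_{q,q}(I,X)$. You instead re-enter the construction of the dyadic Jackson approximants $(S_k)$ from Proposition~\ref{prop-gen-whitney} \emph{without} assuming $f\in L_p$, observe that the key bounds \eqref{est-03}--\eqref{est_02} only require $f\in L_q$ (automatic from the Besov hypothesis), deduce that $(S_k)$ is Cauchy in $L_p(I,X)$, and then identify the $L_p$-limit with $f$ via the cheaper $L_q$-convergence supplied by Jackson's estimate and the embedding $L_p(I,X)\hookrightarrow L_q(I,X)$ on the bounded interval. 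What each buys: the paper's route is shorter but rests on an undischarged density claim (which, for $q<p$ and vector-valued functions, is not completely obvious and, if one tries to prove it, would anyway require precisely the $L_q$-identification trick you use); your route is a bit longer but self-contained, replacing the abstract density step by a direct constructive argument. Two small remarks: (i) in your last display, for $p<1$ one should keep the $\bar p$-th powers throughout rather than take $1/\bar p$-th roots term-by-term, i.e., bound $\|f\|_{L_p}^{\bar p}\le\|S_0\|_{L_p}^{\bar p}+\sum_k\|S_k-S_{k+1}\|_{L_p}^{\bar p}$; (ii) like the paper's proof, your $q\ge p$ H\"older step carries a factor $|I|^{1/p-1/q}$, so strictly speaking the constant is not independent of $|I|$ without an additional normalization — but this imprecision is already present in the paper and is not a flaw of your argument relative to it.
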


\begin{proof}
Let $f \in B^s_{q,q}(I,X) \cap L_p(I,X)$,  $r \in \N$, $r > s$, and let $S_0$ be as in the proof of Theorem~\ref{thm-gen-whitney}. Then,
\[
\| f \|_{L_p(I,X)} \lesssim \| f - S_0 \|_{L_p(I,X)} +  \| S_0 \|_{L_p(I,X)}
\lesssim | f |_{B^s_{q,q}(I,X)} + \| S_0 \|_{L_q(I,X)}.
\]
Since $S_0 \in \V_{I,X}^r$ was chosen satisfying Jackson estimate~\eqref{Jackson} with $p$ replaced by $q$,
\[
\|S_0\|_{L_q(I,X)} \lesssim \|f-S_0\|_{L_q(I,X)} + \|f\|_{L_q(I,X)}
\lesssim w_r(f,I,1)_q + \|f\|_{L_q(I,X)} \lesssim \|f\|_{L_q(I,X)}.
\]
Therefore, for all $f \in B^s_{q,q}(I,X) \cap L_p(I,X)$,
\[
\| f \|_{L_p(I,X)} \lesssim | f |_{B^s_{q,q}(I,X)} +  \|f\|_{L_q(I,X)}
\lesssim \| f \|_{B^s_{q,q}(I,X)}. 
\]
Finally, since $B^s_{q,q}(I,X) \cap L_p(I,X)$ is dense in $B^s_{q,q}(I,X)$ the assertion follows.
\end{proof}

The generalized Whitney's theorem, Theorem~\ref{thm-gen-whitney},  is now a consequence of Proposition~\ref{prop-gen-whitney} and Corollary~\ref{cor-gen-whitney}.

\section{Adaptive approximation {in one variable}}\label{sec:onevariable}

\subsection{The stationary case}

Given a polyhedral space domain $\Omega\subset \real^n$, $n \in \N$,  we let $\TT(\calT_0)$ denote the set of all triangulations $\mathcal{T}$ (partitions into simplices) that are obtained by successive application of the bisection routine of \cite{Ste08} from a properly labeled initial triangulation $\mathcal{T}_0$ of $\Omega$.
If $n=1$, $\TT(\{0<T\})$ denotes the set of all partitions of $\Omega = [0,T)$ into sub-intervals that may be obtained by successive bisection of $\calT_0 = \{[0,T)\}$.
For simplicity, the one-dimensional partition $\{ [0=t_0,t_1), [t_1,t_2),\dots,[t_{N-1},t_N = T) \}$ will be usually denoted by $\{ 0=t_0 < t_1 < \dots < t_N = T \}$.
Whenever we write $\calT_* = \textsc{Refine($\calT,\calM$)}$, we understand that $\calM \subset \calT$ and $\calT_*$ is the refinement of $\calT$ obtained by the bisection routine of \cite{Ste08}.
In the one-dimensional case, we understand that $\calT_*$ is obtained by the sole replacement in $\calT$ of each element $T = [a,b) \in \calM$ by its children $[a,\frac{a+b}2)$, $[\frac{a+b}2,b)$.

Therefore, the following complexity bound holds:
\begin{quote}
Let $\calT_0$, $\calT_1$, $\calT_2$, \dots, be a sequence of partitions in $\TT(\calT_0)$ obtained by successive calls of $\calT_{k+1} = \textsc{Refine($\calT_k,\M_k$)}$, with $\M_k \subset \calT_k$ the set of \emph{marked} elements. Then, there exists a constant $C$ that depends on the initial triangulation $\calT_0$ such that
\begin{equation}
    \label{eq:complexity}
    \#\calT_k - \#\calT_0 \le C \sum_{j=0}^{k-1} \#\M_j, \qquad k=1,2,\dots.
\end{equation}
\end{quote}

For $\calT \in \TT(\calT_0)$, recall that $\mathbb{V}_{\mathcal{T}}^r$ is the finite element space of continuous piecewise polynomials of order $r$, i.e.,
\[
\mathbb{V}_{\mathcal{T}}^r :=\{g\in C(\overline{\Omega}): \ g\big|_{T}\in \Pi^r\ \text{for all } T\in \mathcal{T}\},
\]
where $\Pi^r$ denotes the set of polynomials of total degree (strictly) less than $r$. The underlying domain $\Omega$ and its dimension  are   implicitly indicated by the partition $\calT$, which will sometimes correspond to a time interval $[0,T)$ and sometimes to an $n$-dimensional space domain.\\

\paragraph{\bf Approximation Classes}
Let $X$ be a quasi-Banach  space  on the polyhedral 
bounded Lipschitz domain $\Omega \subset \R^n$ with  quasi-norm $\|\cdot\|_X$.
Let $\calT_0$ be a triangulation of $\Omega$, properly labeled so that~\eqref{eq:complexity} holds, and assume further that $\V_\calT^r \subset X$ for $\calT \in \TT(\calT_0)$. In this context, for $f\in X$, the best $N$-term approximation error is given by
\[
\sigma_N(f)=\inf_{|\mathcal{T}|\leq N}\inf_{g\in \mathbb{V}_{\mathcal{T}}^r}\|f-g\|_X.
\]
For $s>0$ we define the approximation class $\mathbb{A}_s(X)$ as the set of those functions in $X$ whose best $N$-term approximation error is of order $N^{-s}$, i.e.,
\[
\mathbb{A}_s(X):=\{f\in X: \ \exists c>0 \text{ such that } \sigma_N(f)\leq cN^{-s}, \ \forall N\in \nat\}.
\]
Equivalently, we can define $\mathbb{A}_s(X)$ through a semi-quasi-norm as follows:
\[
\mathbb{A}_s(X):=\{f\in X: \ |f|_{\mathbb{A}_s(X)}<\infty\}\quad \text{with}\quad |f|_{\mathbb{A}_s(X)}:=\sup_{N\in \nat}N^s\sigma_N(f).
\]
Alternatively, this definition is equivalent to saying that $f\in {\mathbb{A}_s(X)}$ if there is a constant $c$ such that for all $\varepsilon>0$, there exists a mesh $\mathcal{T}$ that satisfies
\begin{equation}\label{appr-class-1}
\inf_{g\in \V^{r}_{\mathcal{T}}}\|f-g\|_X\leq c \varepsilon \quad \text{and}\quad |\mathcal{T}|\leq \varepsilon^{-1/s},
\end{equation}
and $|f|_{\mathbb{A}_s(X)}$ is equivalent to the infimum of all constants $c$ that satisfy \eqref{appr-class-1}.

We use the following result from \cite[Thm. 2.2, Cor. 2.3]{GM14}, which is the  high-order analog to the one presented in~\cite{BDDP02} for linear finite elements ($r=2$).

\begin{thm}\label{space-poly}
Let $X=B^{\alpha}_{p,p}(\Omega)$, $0<p<\infty$, $0<\alpha<\min\{r,1+\frac 1p\}$ or $X=L_p(\Omega)$ if $\alpha=0$. If $f\in B^{s+\alpha}_{\tau,\tau}(\Omega)$  with $s>0$, $0<\frac{1}{\tau}<\frac sn+\frac 1p$,  and
$s+\alpha < r$,
then
\begin{align}
B^{\alpha+s}_{\tau,\tau}(\Omega)&\subset \mathbb{A}_{s/n}(B^{\alpha}_{p,p}(\Omega))\qquad (\alpha>0), \label{approx-class-2}\\
B^{s}_{\tau,\tau}(\Omega)&\subset \mathbb{A}_{s/n}(L_{p}(\Omega))\qquad \ \;  (\alpha=0).   \label{approx-class-3}
\end{align}
\end{thm}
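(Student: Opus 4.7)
The plan is to follow the thresholding strategy of Binev--Dahmen--DeVore--Petrushev, extended to arbitrary polynomial order by Gaspoz--Morin. Given $f$ in the relevant Besov space and $\varepsilon>0$, the goal is to construct $\calT \in \TT(\calT_0)$ with $\#\calT \lesssim \varepsilon^{-n/s}$ and $g \in \V_\calT^r$ such that $\|f-g\|_X \lesssim \varepsilon\,|f|_{B^{s+\alpha}_{\tau,\tau}(\Omega)}$, which is exactly the characterization of $\mathbb{A}_{s/n}(X)$ via~\eqref{appr-class-1}.

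The first ingredient is a \emph{local} Whitney-type estimate on each simplex $T$: under $1/\tau < s/n + 1/p$ there is a polynomial $P_T \in \Pi^r$ such that
\[
\|f-P_T\|_{X(T)} \;\lesssim\; |T|^{s/n}\, |f|_{B^{s+\alpha}_{\tau,\tau}(T)} .
\]
The scaling exponent $s/n$ is dictated by the Sobolev-type embedding $B^{s+\alpha}_{\tau,\tau}(T) \hookrightarrow B^{\alpha}_{p,p}(T)$ (respectively into $L_p(T)$), which holds precisely in the super-diagonal DeVore-diagram regime $s+\alpha - n/\tau \geq \alpha - n/p$. The case $\alpha=0$ is a standard scaled Whitney estimate on a reference simplex; the case $\alpha>0$ is a Bramble--Hilbert-type argument in Besov norms, the constraint $\alpha < 1+1/p$ being what guarantees that globally continuous piecewise polynomials in $\V_\calT^r$ actually embed into $B^{\alpha}_{p,p}(\Omega)$, so that the elementwise $P_T$'s can be glued into an admissible $g$.

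Second, I would run a greedy refinement: starting from $\calT_0$, fix a tolerance $\eta>0$, mark every $T$ with local estimator $|T|^{s/n}|f|_{B^{s+\alpha}_{\tau,\tau}(T)} > \eta$, refine via \textsc{Refine} and iterate until no element is marked. With $\bar p := \min(1,p)$, the local Whitney bound together with a $\bar p$-triangle inequality gives
\[
\|f-g\|_X^{\bar p} \;\lesssim\; \sum_{T\in\calT} \|f-P_T\|_{X(T)}^{\bar p} \;\lesssim\; \eta^{\bar p}\,\#\calT,
\]
while the complexity bound~\eqref{eq:complexity} combined with the marking criterion produces $\#\calT \lesssim \eta^{-\tau}\,|f|_{B^{s+\alpha}_{\tau,\tau}(\Omega)}^{\tau}$. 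Balancing by $\eta \simeq \varepsilon\,(\#\calT)^{-s/n}$ yields simultaneously $\#\calT \lesssim \varepsilon^{-n/s}$ and $\|f-g\|_X \lesssim \varepsilon\,|f|_{B^{s+\alpha}_{\tau,\tau}(\Omega)}$, as required.

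The main obstacle is the \emph{sub-additivity} of the local Besov contributions: unlike the $L_p$ or Sobolev case, the seminorms $|f|_{B^{s+\alpha}_{\tau,\tau}(T)}^{\tau}$ do not automatically sum to $|f|_{B^{s+\alpha}_{\tau,\tau}(\Omega)}^{\tau}$ over a partition, because the modulus of smoothness involves translations crossing element interfaces. The standard remedy, which is the technical heart of~\cite{GM14}, is to replace the local Besov seminorms in the estimator by equivalent quantities expressed through a hierarchical polynomial (or wavelet-type) basis adapted to the bisection tree; on the tree the local contributions are disjoint by construction, the total sum is controlled by a norm equivalence, and the bound transfers back to $|f|_{B^{s+\alpha}_{\tau,\tau}(\Omega)}$. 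A secondary technical issue in the $\alpha>0$ case is that the elementwise polynomials $P_T$ need not match across interfaces, so a stable quasi-interpolation (conforming in $B^{\alpha}_{p,p}$, possible due to $\alpha < 1+1/p$) must be invoked to produce the actual $g \in \V_\calT^r$ without inflating the error estimate.
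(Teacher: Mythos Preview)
The paper does not prove this theorem at all: it is quoted verbatim as \cite[Thm.~2.2, Cor.~2.3]{GM14} and used as a black box. So there is no ``paper's own proof'' to compare against; your proposal is effectively a sketch of the argument in \cite{GM14} (and its forerunner \cite{BDDP02}), not of anything in the present paper.

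That said, your outline captures the right architecture of the cited proof: local Whitney/Jackson estimates with the correct scaling, a greedy thresholding refinement, the complexity bound~\eqref{eq:complexity}, and the observation that sub-additivity of local Besov seminorms is the technical crux handled via a tree-adapted decomposition. Two small inaccuracies are worth noting. First, your counting step $\#\calT \lesssim \eta^{-\tau}|f|^{\tau}$ is not the right exponent; the actual bound comes from balancing the geometric bound $2^k$ against the Besov contribution level by level, exactly as in the proof of Theorem~\ref{time-poly} in this paper, and yields $\#\calT \lesssim (|f|/\eta)^{1/(s/n+1/p)}$. Your stated ``balancing'' then does not close as written. Second, the role of $\alpha<1+1/p$ is not merely that $\V_\calT^r\subset B^\alpha_{p,p}$; it also enters in controlling the interface jumps when passing from the discontinuous elementwise best polynomials to a conforming $g$, which in \cite{GM14} is done via a quasi-interpolant whose stability requires this restriction.
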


In particular, if $p=2$ and $\alpha = 0$ we have the following result.

\begin{cor}\label{cor:spaceapprox}
Let $X = L_2(\Omega)$, $r \in \N$, $0<s < r$, and  $0<\frac 1\tau < \frac sn + \frac 12$. Then there exists a constant $C = C(r,s,\tau,\Omega,\TT)$ such that, for every $\varepsilon > 0$ there exists $\calT \in \TT(\calT_0)$ and $g \in \V^{r}_\calT$ such that
\[
\| f - g \|_X \le \varepsilon\, |f|_{B^s_{\tau,\tau}(\Omega)}   \qquad\text{and}\qquad
|\calT| \lesssim \varepsilon^{-n/s}.
\]
\end{cor}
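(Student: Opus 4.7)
The plan is to deduce this corollary directly from Theorem~\ref{space-poly} by specializing the parameters and translating the approximation-class statement into the equivalent $\varepsilon$-formulation~\eqref{appr-class-1}. First I would verify that, for $X = L_2(\Omega)$, we are in the regime $\alpha = 0$, $p = 2$ of Theorem~\ref{space-poly}. The hypotheses of that theorem with these choices read $s>0$, $0 < \frac1\tau < \frac{s}{n} + \frac12$, and $s < r$, which are exactly the hypotheses of the corollary. Hence the embedding~\eqref{approx-class-3} applies, giving
\[
B^{s}_{\tau,\tau}(\Omega) \subset \mathbb{A}_{s/n}(L_{2}(\Omega)),
\]
and this embedding is continuous, so that $|f|_{\mathbb{A}_{s/n}(L_2(\Omega))} \le C\,|f|_{B^s_{\tau,\tau}(\Omega)}$ for a constant $C = C(r,s,\tau,\Omega,\TT)$.

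Next I would unpack the definition of the approximation class. By definition,
\[
\sigma_N(f) \le |f|_{\mathbb{A}_{s/n}(L_2(\Omega))} \, N^{-s/n} \le C\,|f|_{B^s_{\tau,\tau}(\Omega)}\, N^{-s/n}, \qquad \forall N \in \N.
\]
Equivalently (cf.\ the discussion around~\eqref{appr-class-1}), for every $\delta > 0$ there is a triangulation $\calT \in \TT(\calT_0)$ with $\#\calT \le \delta^{-n/s}$ and some $g \in \V^r_\calT$ satisfying $\|f-g\|_{L_2(\Omega)} \le c\,\delta\,|f|_{B^s_{\tau,\tau}(\Omega)}$, where $c$ absorbs the embedding constant above.

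Given $\varepsilon > 0$, I would then simply apply this with $\delta := \varepsilon/c$ (or, equivalently, choose $N = \lceil (\varepsilon/c)^{-n/s}\rceil$ and pick $\calT$ and $g$ realizing the infimum defining $\sigma_N$ up to a factor of $2$, which is harmless after redefining constants). This produces $\calT \in \TT(\calT_0)$ with $\#\calT \lesssim \varepsilon^{-n/s}$ and $g \in \V^r_\calT$ with $\|f-g\|_{L_2(\Omega)} \le \varepsilon\,|f|_{B^s_{\tau,\tau}(\Omega)}$, as required.

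There is no genuine obstacle: the only points requiring care are making sure the parameters $\alpha=0$, $p=2$ fit under the hypothesis $0 < \alpha < \min\{r, 1 + 1/p\}$ of Theorem~\ref{space-poly} (the theorem explicitly allows $\alpha = 0$ with $X = L_p(\Omega)$), and rewriting the asymptotic statement $\sigma_N(f) \lesssim N^{-s/n}|f|_{B^s_{\tau,\tau}(\Omega)}$ in the $\varepsilon$-form of~\eqref{appr-class-1}. Everything else is definitional.
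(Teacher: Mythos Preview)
Your proposal is correct and matches the paper's approach: the corollary is stated immediately after Theorem~\ref{space-poly} with the remark ``In particular, if $p=2$ and $\alpha = 0$ we have the following result,'' and no separate proof is given. Your specialization of parameters and translation into the $\varepsilon$-formulation via~\eqref{appr-class-1} is exactly the intended argument.
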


\subsection{Greedy algorithm} 

Theorem~\ref{space-poly}, or equivalently Corollary~\ref{cor:spaceapprox}, is proved with the help of a so called \emph{Greedy algorithm}. In order to make this article self-contained, we present it here and use it to build a quasi-optimal partition of $[0,T)$ to approximate a vector-valued function in $L_{p}([0,T),X)$.
This, in turn, is an intermediate tool for constructing the optimal time-space partition.

In the rest of this section we consider the following framework. We let $X$ denote a Banach space,  $r\in\nat$ denotes the polynomial \emph{order} with respect to time, and for an interval $I$, recall the definition of $\V_{I,X}^r$ from~\eqref{Vr}:
\[
\mathbb{V}^r_{I,X}:=\Big\{ P(t)=\sum_{j=0}^{r-1}a_j t^j, \ a_j\in X, \ t\in I\Big\} \subset L_p(I,X),
\]
i.e., the tensor product space $\Pi^r \otimes X$ on the time slice $I\times \Omega$.
For a partition $\calT = \{ 0=t_0 < t_1 < \dots < t_N = T \}$ of the time interval $[0,T)$, we consider the following corresponding (abstract) finite element space:
\[
\V_{\calT,X}^r = \{ P \in L_p([0,T),X) :\ P_{|I} \in \V_{I,X}^r,\ I \in \calT \}.
\]

Recall the definition of the best approximation error $E_r(f,I)_p$ associated with an interval $I\subset [0,T)$, i.e.,
\begin{equation}
E_r(f,I)_p = \inf_{P_I\in \mathbb{V}_{I,X}^r}\|f-P_I\|_{L_p(I, X)},
\end{equation}
so that
\[
\inf_{g \in \V_{\calT,X}^r} \| f - g \|_{L_p([0,T),X)}
= \left(\sum_{I \in \calT} E_r(f,I)_p^p \right)^{1/p}.
\]

An algorithm approximating the solution with a parameter $\delta >0$ reads as follows: \\

\begin{algorithm}[h]
\caption{\emph{Greedy} algorithm}\label{a:greedy}
\begin{algorithmic}[1]
\Function{Greedy}{$f$,$\delta$}
\State{Let $\mathcal{T}_0 = \{0<T\} = \{ [0,T) \}$.}
\State{$k = 0$}
\While{$\M_k :=  \{ I \in \calT_k :  E_r(f,I)_p > \delta \} \ne \emptyset$}
\State{Let $\mathcal{T}_{k+1} = \textsc{Refine}(\mathcal{T}_k,\M_k)$}
\State{$k \leftarrow k+1$}
\EndWhile
\EndFunction
\end{algorithmic}

\end{algorithm}

\subsection{Semi-discretization in time}

Concerning the error when approximating a vector-valued function with piecewise polynomials with respect to time, we have the following result.

\begin{thm}[{\bf Time discretization}]\label{time-poly}
Let $X$ be a separable Banach space, let $s>0$, $0<p,q\leq \infty$, and  $\left(\frac 1q-\frac 1p\right)_+ \le s < r$, with $r \in \N$.
Then, if $f\in B^s_{q,q}([0,T),X)$ and $\varepsilon > 0$, there exists $\delta > 0$ such that \textsc{Greedy($f$,$\delta$)} terminates in finitely many steps and the generated partition $\mathcal{T}$ satisfies
\beq\label{boundonT}
\# \mathcal{T}\le c_1 \, 
\varepsilon^{-1/s}, 
\eeq
where the constant $c_1>0$ depends on $p$, $q$, and $s$ but not on $f$.
Moreover, there exists $P \in \V_{\calT,X}^r$ satisfying 
\beq\label{boundonerror}
\|f-P\|_{L_p([0,T), X)} \le c_2 \, \varepsilon \, |f|_{B^s_{q,q}([0,T),X)}
\le c_3 \, {(\# \mathcal{T})^{-s}}|f|_{B^s_{q,q}([0,T),X)},
\eeq
with $c_2,c_3>0$ depending on $p$, $q$, and $s$ but not on $f$.
\end{thm}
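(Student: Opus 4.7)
The plan is to adapt the Greedy-algorithm argument from the stationary setting of~\cite{BDDP02,GM14} to vector-valued functions. The two key inputs are the generalized Whitney estimate (Theorem~\ref{thm-gen-whitney}) and a subadditivity property of the Besov seminorm, which together with the trivial bound $\#\{\text{internal nodes at depth }j\}\le 2^j$ from the 1D bisection structure will yield a sharp cardinality estimate for the terminal partition.

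First I will record the following subadditivity: for any disjoint family $\{I_j\}$ of subintervals of $[0,T)$,
\[
\sum_j |f|_{B^s_{q,q}(I_j,X)}^{q} \;\lesssim\; |f|_{B^s_{q,q}([0,T),X)}^{q}.
\]
This follows from the pointwise-in-$h$ subadditivity of $\|\Delta_h^r f\|_{L_q(\cdot,X)}^q$, inherited by the averaged modulus $w_r$, combined with the equivalence $w_r\simeq\omega_r$ from Lemma~\ref{lem-equiv} and the uniform-in-$|I|$ equivalence of integration ranges from Remark~\ref{rem-equiv}.

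Fix $\delta>0$, set $\alpha:=s+\tfrac{1}{p}-\tfrac{1}{q}\ge 0$ and $\tau:=1/(s+\tfrac{1}{p})$ (so that the hypothesis $(\tfrac{1}{q}-\tfrac{1}{p})_+\le s$ amounts to $\tau\le q$), and let $T^\ast_j$ denote the subset of $\mathcal{T}_j$ that is marked at step $j$. Since $T^\ast_j$ is disjoint, each of its intervals satisfies $|I|\le T\,2^{-j}$ and $E_r(f,I)_p>\delta$, so Whitney's estimate combined with the subadditivity above gives
\[
\#T^\ast_j\cdot\delta^q \;\le\; \sum_{I\in T^\ast_j} E_r(f,I)_p^{q} \;\lesssim\; (T\,2^{-j})^{\alpha q}\,|f|_{B^s_{q,q}([0,T),X)}^{q}.
\]
Together with the trivial $\#T^\ast_j\le 2^j$ coming from the tree structure, this yields $\#T^\ast_j\lesssim\min\bigl(2^j,\,T^{\alpha q}|f|^q\delta^{-q}\,2^{-j\alpha q}\bigr)$. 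Summing the two geometric progressions, which balance at $j^\ast\simeq\tfrac{1}{1+\alpha q}\log_2\!\bigl(T^{\alpha q}|f|^q\delta^{-q}\bigr)$, and invoking the algebraic identities $1/(1+\alpha q)=\tau/q$ and $q(1-\tau\alpha)=\tau$ produce the cardinality bound
\[
\#\mathcal{T}-1 \;=\; \sum_{j\ge 0}\#T^\ast_j \;\lesssim\; T^{\alpha\tau}\,|f|_{B^s_{q,q}([0,T),X)}^{\tau}\,\delta^{-\tau}.
\]
The same estimate applied to each intermediate $\mathcal{T}_k$ shows $\#\mathcal{T}_k$ is uniformly bounded, forcing the algorithm to terminate after finitely many steps.

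On the terminal partition every $I\in\mathcal{T}$ satisfies $E_r(f,I)_p\le\delta$; patching best local $L_p$-approximants together gives $P\in\V_{\mathcal{T},X}^r$ with $\|f-P\|_{L_p([0,T),X)}^p=\sum_{I\in\mathcal{T}}E_r(f,I)_p^p\le\delta^p\,\#\mathcal{T}$. Combining this with the cardinality bound and the identity $1-\tau/p=s\tau$ gives $\|f-P\|_{L_p}\lesssim\delta^{s\tau}|f|_{B^s_{q,q}}^{\tau/p}$, and choosing $\delta:=\varepsilon^{1/(s\tau)}\,|f|_{B^s_{q,q}}$ produces the two asserted bounds simultaneously. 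I expect the main technical obstacle to be the min-across-levels trick, which is essential for upgrading the naive per-level $\delta^{-q}$ estimate to the sharp global $\delta^{-\tau}$ bound; a secondary subtlety is the limiting case $\alpha=0$ (only occurring when $s=\tfrac{1}{q}-\tfrac{1}{p}$ and $q<p$), where Whitney provides no geometric decay in the level and termination must be argued separately, via the absolute continuity of the Besov seminorm with respect to the interval length.
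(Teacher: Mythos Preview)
Your argument is correct and follows essentially the same route as the paper: Whitney's estimate plus subadditivity of the Besov seminorm give the two per-level bounds $\#T^\ast_j\le 2^j$ and $\#T^\ast_j\lesssim (T2^{-j})^{\alpha q}|f|^q\delta^{-q}$, and summing the min yields the cardinality bound $\#\mathcal{T}\lesssim(|f|/\delta)^\tau$; the paper then makes the same choice $\delta=\varepsilon^{1/(s\tau)}|f|_{B^s_{q,q}}$ (written there as $\varepsilon^{(s+1/p)/s}|f|$). Your termination argument via uniform boundedness of $\#\mathcal{T}_k$ is a minor variant of the paper's direct use of $|I|^{\alpha}\to 0$, and you are in fact more careful than the paper about the limiting case $\alpha=0$, which the paper's proof silently excludes by writing a strict inequality.
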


\begin{proof}
Let $\varepsilon>0$ be given and let $\delta = \varepsilon^{\frac{s+1/p}s} |f|_{B^s_{q,q}([0,T),X)}$.
Using Whitney's estimate \eqref{whitney} we see that the error $ E_r(f,I)_p$ associated with an interval $I$ satisfies
\begin{equation}\label{error_est}
 E_r(f,I)_p = \inf_{P_I \in \V_{I,X}^r} \| f - P_I \|_{L_p(I,X)} \lesssim |I|^{s+\frac 1p-\frac 1q}|f|_{B^s_{q,q}(I,X)}.
\end{equation}
Since $s+\frac {1}{p}-\frac 1q>0$ the right-hand side goes to zero as $|I|$ goes to zero, which shows that the Greedy algorithm terminates in a finite number of steps $K$.

We now bound the number of elements of $\mathcal{T}:= \calT_K$ as follows. Initially, $\mathcal{T}_0=\{[0,T)\}$, therefore, $\# \mathcal{T}_0=1$. In each iteration of the {\bf while}-loop, $\#\mathcal{M}_k$ elements are \emph{marked} for refinement. If $\overline{\mathcal{M}}=\bigcup_{k=0}^{K-1}\mathcal{M}_k$ is the union of all marked elements in a certain step of the algorithm, then, due to~\eqref{eq:complexity}, the resulting final partition $\mathcal{T}$ satisfies $\# \mathcal{T}\lesssim 1 + \# \overline{\mathcal{M}} \lesssim \# \overline{\mathcal{M}}$.
We see that estimating $\# \mathcal{T}$ is comparable with estimating $\# \overline{\mathcal{M}}$. In order to count the number of elements in $\overline{\mathcal{M}}$ observe that $\#\overline{\mathcal{M}}=\sum_{k=0}^{\infty}\#\mathcal{M}^k$,
with
\[
\calM_k=\left\{I\in \overline{\calM}: |I| = T/2^k\right\}
\ \text{ if }\ 0 \le k \le K-1
\qquad \text{and}\qquad  \calM_k = \emptyset
\ \text{ if }\ k \ge K.
\]
On the one hand, since our time interval $[0,T)$ is finite, we obtain the upper bound
\[
\#\calM_k \le  2^k, \qquad k\in \nat_0.
\]
On the other hand, if $I\in \calM_k$ from steps 4 and 6 of the Greedy algorithm and formula \eqref{error_est},  we have
\[
\delta< E_r(f,I)_p \lesssim \left(\frac{1}{2^k}\right)^{s+\frac 1p-\frac 1q}|f|_{B^s_{q,q}(I,X)},
\quad\text{so that}\quad
\delta^q\lesssim \left(\frac{1}{2^k}\right)^{sq+\frac qp-1}|f|^q_{B^s_{q,q}(I,X)}.
\]
This implies
\[
\delta^q \# \calM_k =\sum_{I\in \calM_k}\delta^q
\lesssim \left(\frac{1}{2^k}\right)^{sq+\frac qp-1}\sum_{I\in \calM_k}|f|^q_{B^s_{q,q}(I,X)}
\le \left(\frac{1}{2^k}\right)^{sq+\frac qp-1}|f|^q_{B^s_{q,q}([0,T),X)},
\]
i.e.,
\[
\# \calM_k \lesssim \min \left\{2^k, \frac{1}{\delta^q}\left(\frac{1}{2^k}\right)^{sq+\frac qp-1}|f|^q_{B^s_{q,q}([0,T),X)}\right\}.
\]
The first term corresponds to an increasing geometric series, the second to a decreasing one. Setting  $k_0:=\min \left\{k\in \nat_0: \ \frac{1}{\delta^q} \left(\frac{1}{2^k}\right)^{sq+\frac qp-1}|f|^q_{B^s_{q,q}([0,T),X)}<2^k\right\}$  we obtain
\begin{align}
\# \overline{\calM}=\sum_{k=0}^\infty \#\calM_k
&\leq \sum_{k=0}^{k_0-1}2^k+\sum_{k=k_0}^{\infty}\frac{1}{\delta^q} \left(\frac{1}{2^k}\right)^{sq+\frac qp-1}|f|^q_{B^s_{q,q}([0,T),X)}
\notag
\\
& \lesssim 2^{k_0}+\frac{1}{\delta^q} \left(\frac{1}{2^{k_0}}\right)^{sq+\frac qp-1}|f|^q_{B^s_{q,q}([0,T),X)}
\lesssim 2^{k_0}.
\label{one}
\end{align}
In order to estimate $2^{k_0}$ we observe that
\[
2^{k_0-1}\le \frac{1}{\delta^q} \left(\frac{1}{2^{k_0}}\right)^{sq+\frac qp-1}|f|^q_{B^s_{q,q}([0,T),X)}<2^{k_0},
\]
\[
2^{k_0(s+\frac 1p)-\frac 1q}\le \frac{1}{\delta}|f|_{B^s_{q,q}([0,T),X)}<2^{k_0(s+\frac 1p)}.
\]
We see that
\beq\label{two}
2^{k_0}\le \left(\frac{1}{\delta}\right)^{\frac1{s+1/p}}|f|^{\frac1{s+1/p}}_{B^s_{q,q}([0,T),X)},
\eeq
therefore, from~\eqref{one} and~\eqref{two} we get 
$$
\# \calT \lesssim \# \overline{\calM}\lesssim \left(\frac{1}{\delta}\right)^{\frac1{s+1/p}}|f|^{\frac1{s+1/p}}_{B^s_{q,q}([0,T),X)}, \quad \text{i.e.}, \quad \delta \lesssim (\# \calT)^{-(s+\frac1p)}|f|_{B^s_{q,q}([0,T),X)},
$$
and~\eqref{boundonT} follows after recalling that $\delta = \varepsilon^{\frac{s+1/p}s} |f|_{B^s_{q,q}([0,T),X)}$.

Finally, for each $I \in \calT$ we let $P_I \in \V_{I,X}^r$ satisfy $\| f - P_I\|_{L_p(I,X)} \le {2 E_r(f,I)_p}$ and let $P(t) = \sum_{I \in \calT} \chi_I(t) P_I(t)$, $t \in [0,T)$. Hence,
\begin{align*}
P \in \V_{\calT,X}^r \quad\text{and}\quad
\| f - P \|_{L_p([0,T),X)}^p &\lesssim \delta^p \#\calT
\lesssim \#\calT^{-ps} |f|_{B^s_{q,q}([0,T),X)}^p,
\end{align*}
and~\eqref{boundonerror} follows.
\end{proof}

\section{Discretization in time and space}\label{sec:timespace}

We now consider the error when approximating a function with piecewise polynomials with respect to time and space. In this article, we deal with the approximation in $L_2([0,T)\times\Omega) = L_2([0,T),X)$, where hereafter we let $X = L_2(\Omega)$.
{We restrict ourselves to this Hilbertian case in order to avoid additional technical difficulties and leave the study of more general quasi-norms, e.g. $p\neq 2$ and $X \neq L_2(\Omega)$, to a forthcoming article.}

\subsection{{Time marching fully discrete adaptivity}}

Recall that the type of discretizations that we consider are those consisting of a partition $\{ 0=t_0<t_1<\dots<t_N=T \}$ of the time interval and a sequence of partitions $\mathcal{T}_1,\dots, \mathcal{T}_N \in \TT$ of the space domain $\Omega$, where $\mathcal{T}_i$ corresponds to the subinterval $[t_{i-1},t_i)$, $i=1,\dots N$.
The time-space partition is then given by
\[
\mathcal{P}=\left(\{0=t_0<t_1<\ldots<t_N=T\}, \{\mathcal{T}_1,\ldots, \mathcal{T}_N\}\right),
\quad \text{with }\# \mathcal{P}=\sum_{i=1}^N \#\mathcal{T}_i,
\]

Given $r_1,r_2 \in\N$, the finite element space $\overline{\mathbb{V}}_{\mathcal{P}}^{r_1,r_2}$ subject to such a partition $\mathcal{P}$ is defined as
\begin{align*}
\overline{\mathbb{V}}_{\mathcal{P}}^{r_1,r_2} &:=\{G:  [0,T)\times\Omega\rightarrow \real:  G_{\big|[t_{i-1},t_i)\times\Omega} \in \Pi^{r_1} \otimes \mathbb{V}_{\mathcal{T}_i}^{r_2},\text{ for all }i=1,2,\dots,N \},
\end{align*}
i.e., $G \in \overline{\mathbb{V}}_{\mathcal{P}}^{r_1,r_2}$ if and only if $G(t,\cdot)\in \mathbb{V}_{\mathcal{T}_i}^{r_2}$ for all $t\in [t_{i-1},t_i) $ and $G(\cdot,x)\big|_{[t_{i-1},t_i)}\in \Pi^{r_1}$ for all $x\in \Omega$ and all $i=1,2,\dots,N$.

In order to construct an optimal approximate solution with tolerance $\varepsilon>0$ we use the one-dimensional Greedy algorithm as described on page \pageref{a:greedy} for the (adaptive) discretization in time and an $n$-dimensional Greedy algorithm for (adaptive) discretizations in space. This allows us to use the results from Theorems \ref{time-poly} and \ref{space-poly}, respectively. In particular, we obtain the following result.


\begin{thm}[{\bf Approximation with fully discrete functions}]\label{space-time-poly}
Let $0 < s_i < r_i$, $ i=1,2$, $0 < q_1 \le \infty$, $1\le q_2 \le \infty$ with $s_1 > \big(\frac1{q_1}-\frac12\big)_+$ and 
$s_2 > n (\frac1{q_2}-\frac12\big)_+$.
Let $f\in B^{s_1}_{q_1,q_1}([0,T),X) \cap L_2([0,T),B_{q_2,q_2}^{s_2}(\Omega))$, with $X = L_2(\Omega)$.
Then, for each $\varepsilon > 0$ there exists a time-space partition $\calP$ that satisfies
\[
\# \mathcal{P}\le c_1 \varepsilon^{-\big(\frac1{s_1}+\frac {n}{s_2}\big)}
\]
and a function $F \in \overline{\V}_{\mathcal{P}}^{r_1,r_2}$ such that
\begin{align*}
\| f - F \|_{ L_2([0,T),X)}
\le c_2 \varepsilon
|\!|\!| f |\!|\!|
\le c_3 (\#\mathcal{P})^{-\frac{1}{\frac1{s_1}+\frac {n}{s_2}}} |\!|\!| f |\!|\!|,
\end{align*}
where $|\!|\!| f |\!|\!| = | f |_{B^{s_1}_{q_1,q_1}([0,T),X)} + \| f \|_{L_2([0,T),B_{q_2,q_2}^{s_2}(\Omega))}$ and the positive constants $c_1,c_2,c_3$ depend on $q_1,q_2$, and $s_1,s_2$ but not on $f$.
\end{thm}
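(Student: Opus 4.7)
I would build the approximant $F$ by a two-stage adaptive procedure: Theorem~\ref{time-poly} supplies the time partition, and Corollary~\ref{cor:spaceapprox} supplies the spatial partitions, one per time slab. Expressing the time approximant in an $L_2$-orthonormal polynomial basis cleanly decouples the two error contributions.

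\textbf{Time step.} Apply Theorem~\ref{time-poly} with $X = L_2(\Omega)$, $p=2$, $q=q_1$, $s=s_1$, $r=r_1$ and tolerance $\varepsilon$; the hypothesis $s_1 > (1/q_1 - 1/2)_+$ is exactly the theorem's applicability condition. This produces a time partition $\calT = \{0 = t_0 < \dots < t_N = T\}$ with $N = \#\calT \lesssim \varepsilon^{-1/s_1}$ and a function $P \in \V^{r_1}_{\calT,L_2(\Omega)}$ satisfying
\[
\|f - P\|_{L_2([0,T)\times\Omega)} \lesssim \varepsilon\, |f|_{B^{s_1}_{q_1,q_1}([0,T),L_2(\Omega))}.
\]
I take $P$ so that on each slab $I_i = [t_{i-1},t_i)$ the restriction $P_i$ is the (unique) $L_2$-orthogonal projection of $f|_{I_i}$ onto $\V^{r_1}_{I_i,L_2(\Omega)}$. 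Writing it in the $L_2(I_i)$-orthonormal Legendre basis $\{\psi_{i,j}\}_{j=0}^{r_1-1}$ of $\Pi^{r_1}$, one has $P_i(t,x) = \sum_{j=0}^{r_1-1} \psi_{i,j}(t)\, g_{i,j}(x)$ with Bochner integrals
\[
g_{i,j} = \int_{I_i} \psi_{i,j}(t)\, f(t,\cdot)\, \d t \in B^{s_2}_{q_2,q_2}(\Omega).
\]
Since $q_2 \ge 1$ makes $B^{s_2}_{q_2,q_2}(\Omega)$ a Banach space, Minkowski's integral inequality followed by Cauchy--Schwarz in time and $\|\psi_{i,j}\|_{L_2(I_i)}=1$ yields $\|g_{i,j}\|_{B^{s_2}_{q_2,q_2}(\Omega)} \le \|f\|_{L_2(I_i,B^{s_2}_{q_2,q_2}(\Omega))}$.

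\textbf{Space step and assembly.} For each $(i,j)$, apply Corollary~\ref{cor:spaceapprox} to $g_{i,j}$ with tolerance $\varepsilon$; the hypothesis $s_2 > n(1/q_2-1/2)_+$ is precisely what permits the choice $\tau = q_2$. This gives $\calT_{i,j}\in\TT$ with $\#\calT_{i,j}\lesssim \varepsilon^{-n/s_2}$ and $\tilde g_{i,j}\in \V^{r_2}_{\calT_{i,j}}$ satisfying $\|g_{i,j} - \tilde g_{i,j}\|_{L_2(\Omega)} \le \varepsilon\, \|g_{i,j}\|_{B^{s_2}_{q_2,q_2}(\Omega)}$. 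Let $\calT_i$ be the overlay (coarsest common bisection refinement) of $\calT_{i,0}, \dots, \calT_{i,r_1-1}$; by the standard additive overlay bound for newest-vertex-bisection meshes (see \cite{Ste08}), $\#\calT_i \lesssim r_1\,\varepsilon^{-n/s_2}$ and every $\tilde g_{i,j}\in \V^{r_2}_{\calT_i}$. Setting $F_i(t,x) = \sum_j \psi_{i,j}(t)\,\tilde g_{i,j}(x)$ and $F = \sum_i \chi_{I_i} F_i$, I obtain $F \in \overline{\V}^{r_1,r_2}_{\calP}$ with $\calP = (\calT, \{\calT_i\}_{i=1}^N)$. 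Orthonormality of $\{\psi_{i,j}\}$ in $L_2(I_i)$ decouples the space-time error on each slab:
\[
\|P_i - F_i\|_{L_2(I_i\times\Omega)}^2 = \sum_{j=0}^{r_1-1} \|g_{i,j} - \tilde g_{i,j}\|_{L_2(\Omega)}^2 \le \varepsilon^2 \sum_{j=0}^{r_1-1} \|g_{i,j}\|_{B^{s_2}_{q_2,q_2}(\Omega)}^2.
\]
Summing over $i$ and using the coefficient bound from the time step gives $\|P-F\|_{L_2([0,T)\times\Omega)} \lesssim \varepsilon\, \|f\|_{L_2([0,T),B^{s_2}_{q_2,q_2}(\Omega))}$. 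The triangle inequality with the time step yields the first claimed error estimate, while $\#\calP = \sum_i \#\calT_i \lesssim N\cdot \varepsilon^{-n/s_2} \lesssim \varepsilon^{-(1/s_1 + n/s_2)}$ gives the complexity bound, and eliminating $\varepsilon$ produces the rate form.

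\textbf{Main obstacle.} The conceptual crux is the choice of $P$ in the time step. A generic best-approximation minimizer per slab would not guarantee that the polynomial coefficients inherit spatial regularity from $f$; what makes Step~2 work is the linearity of the $L_2$-orthogonal projector, which expresses each $g_{i,j}$ as a bounded linear functional of $f$ and thus transfers the $L_2(I_i,B^{s_2}_{q_2,q_2}(\Omega))$-smoothness to the scalar coefficients. A secondary technical point is that the definition of $\overline{\V}^{r_1,r_2}_{\calP}$ requires a single spatial mesh per slab, so the $r_1$ independently produced meshes $\calT_{i,j}$ must be merged; this is handled by the standard additive cardinality estimate for bisection overlays.
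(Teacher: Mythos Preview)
Your proof is correct and follows essentially the same approach as the paper's: a two-step construction using Theorem~\ref{time-poly} for the time partition with the $L_2$-orthogonal projection on each slab, an orthonormal polynomial basis to expose the spatial coefficients, a Minkowski/Cauchy--Schwarz bound to transfer the $B^{s_2}_{q_2,q_2}$-regularity to those coefficients (the paper isolates this as Lemma~\ref{L:uniformbound}), Corollary~\ref{cor:spaceapprox} for each coefficient, and an overlay of the resulting spatial meshes. The only cosmetic difference is that the paper cites~\cite{CKNS08} rather than~\cite{Ste08} for the overlay cardinality bound.
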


\begin{rem}
Here (with a little abuse)  we use the notation 
\[
L_2(I,B_{q,q}^s(\Omega)) = \left\{ f : I \to B_{q,q}^s(\Omega) : \| f \|_{ L_2(I,B_{q,q}^s(\Omega))}  < \infty \right\}
\]
with $ \| f \|_{ L_2(I,B_{q,q}^s(\Omega))} := \left(\int_I \| f(t) \|_{B_{q,q}^s(\Omega)}^2 \ud t\right)^{1/2} $. 

 The restriction $q_2\geq 1$ in Theorem \ref{space-time-poly} can probably be removed  and replaced by $q_2>0$. 
 It appears here 
 due to the fact that we require in the proof below a uniform bound of the approximants on a subinterval $I=[t_{i-1},t_i)$, which is established in Lemma \ref{L:uniformbound}. 
 Our current proof of Lemma  \ref{L:uniformbound} uses Minkowski's inequality which only works if $q_2\geq 1$. So far we were not able to find an  appropriate modification for $q_2<1$. 
\end{rem}

\begin{proof}[Proof of Theorem \ref{space-time-poly}]
Given $f \in B^{s_1}_{q_1,q_1}([0,T),X) \cap L_2([0,T),B_{q_2,q_2}^{s_2}(\Omega))$ and $\varepsilon > 0$, the approximant  $F\in \overline{\mathbb{V}}_{\mathcal{P}}^{r_1,r_2}$ is constructed in two steps as follows.

We first use a one-dimensional Greedy algorithm and apply the results from Theorem~\ref{time-poly}.
This gives a partition of the time interval
$0=t_0<t_1<\ldots < t_N=T$ and an approximant $G=\sum_{i=1}^N{\chi}_{[t_{i-1},t_i)}G_i \in \V_{\{0<t_1<\dots<T\},X}^{r_1}$ with $G_i$ the $L_2([t_{i-1},t_i),X)$ projection of $f_{|[t_{i-1},t_i)}$ into $\V_{[t_{i-1},t_i),X}^{r_1}$.
This partition and approximant satisfy
\[
N \lesssim \varepsilon^{-1/s_1} \quad\text{and}\quad
\|f-G\|_{L_2([0,T), X)}
\lesssim \varepsilon | f |_{ B^{s_1}_{q_1,q_1}([0,T),X)}.
\]
Also, if $\{\onb{i}{j}\}_{j=1}^{r_1}$ is an orthonormal basis of $\mathbb{V}^{r_1}_{[t_{i-1},t_i),\R}$ then
\[
G_i(t) = \sum_{j=1}^{r_1} G_i^j \, \onb{i}{j}(t),
\quad\text{with}\quad
G_i^j = \int_I f(t) \onb{i}{j}(t) \, \ud t,
\]
noting that the last integral is a Bochner integral in $X = L_2(\Omega)$.

We now observe that due to Lemma~\ref{L:uniformbound} below we have $G_i^j  \in B^{s_2}_{q_2,q_2}(\Omega)$   and 
\begin{equation}\label{uniformbound}
\| G_i \|_{ L_2([t_{i-1},t_i),B^{s_2}_{q_2,q_2}(\Omega) )} \lesssim \| f \|_{ L_2([t_{i-1},t_i),B^{s_2}_{q_2,q_2}(\Omega) )}.
\end{equation}

The second step consists in approximating each function $G_i^j  \in B^{s_2}_{q_2,q_2}(\Omega)$   using the space-adaptive Greedy algorithm. Resorting to Corollary~\ref{cor:spaceapprox} we find a mesh $\calT_i^j \in \TT(\calT_0)$ and a finite element function $F_i^j \in \V_{\calT_i^j}^{r_2}$ with
\[
\#\calT_i^j \lesssim \varepsilon^{-\frac{n}{s_2}}
\quad\text{and}\quad
\| G_i^j - F_i^j \|_X \lesssim \varepsilon | G_i^j |_{B^{s_2}_{q_2,q_2}(\Omega)}.
\]
Therefore, after defining $\calT_i = \oplus_{j=1}^{r_1} \calT_i^j$ (the overlay of the meshes~\cite{CKNS08}), we have that $F_i(t) := \sum_{j=1}^{r_1} \onb{i}{j}(t) F_i^j \in \V_{[t_{i-1},t_i),\V_{\calT_i}^{r_2}}^{r_1}$ satisfies
\begin{align*}
\#\calT_i \le \sum_{j=1}^{r_1} \#\calT_i^j &\lesssim \varepsilon^{-\frac{n}{s_2}}
\quad\text{\cite[Lem.~3.7]{CKNS08} and}\quad \\
\| F_i - G_i \|_{ L_2([t_{i-1},t_i),X )}
&
\lesssim \varepsilon
\| G_i \|_{ L_2([t_{i-1},t_i),B^{s_2}_{q_2,q_2}(\Omega)) }
\lesssim
\varepsilon
\| f \|_{ L_2([t_{i-1},t_i),B^{s_2}_{q_2,q_2}(\Omega)) },
\end{align*}
due to~\eqref{uniformbound}.

Finally, we let $\calP = \{ \{ 0=t_0 < t_1 < \dots < t_N = T\}, \{ \calT_1, \calT_2, \dots , \calT_N \} \}$ and define $F = \sum_{i=1}^N \chi_{[t_{i-1},t_i)} F_i \in \overline{\V}_\calP^{r_1,r_2}$,
whence by the triangle inequality
\begin{align*}
\|f-F\|_{L_2([0,T),X)}
& \leq \|f-G\|_{L_2([0,T),X)} + \|G-F\|_{L_2([0,T),X)}
\\
& \lesssim \varepsilon | f |_{ B^{s_1}_{q_1,q_1}([0,T),X)}
+  \bigg( \sum_{i=1}^N \|  G_i - F_i \|_{ L_2([t_{i-1},t_i),X)}^2 \bigg)^{1/2}
\\
&\lesssim \varepsilon \Big( | f |_{ B^{s_1}_{q_1,q_1}([0,T),X)}
+
\| f \|_{ L_2([t_{i-1},t_i),B^{s_2}_{q_2,q_2}(\Omega))} \Big)
\end{align*}
and
\begin{align*}
\# \mathcal{P}
=\sum_{i=1}^N \# \mathcal{T}_i
\lesssim  N \, \varepsilon^{-\frac n{s_2}}
\lesssim \varepsilon^{-\frac{1}{s_1}} \, \varepsilon^{-\frac n{s_2}}
 =   \varepsilon^{-(\frac 1{s_1}+\frac n{s_2})} .
\end{align*}
The assertion of the theorem thus follows.
\end{proof}

In Theorem \ref{space-time-poly}, formula \eqref{uniformbound}, we required a uniform bound of the approximants $G_i$ on a subinterval $I=[t_{i-1},t_i)$, which is provided by the following lemma.

\begin{lem}\label{L:uniformbound}
Given a finite interval $I$, let $r=r_1$, $s=s_2$, and $q=q_2$ satisfy the assumptions from Theorem~\ref{space-time-poly}  and  assume $f \in L_2(I,B_{q,q}^s(\Omega))$.    If $G \in \V_{I,X}^r$ is the $L_2(I,X)$ projection of $f \in L_2(I,B_{q,q}^s(\Omega))$, then
\[
\| G \|_{ L_2(I,B^{s}_{q,q}(\Omega) )} \lesssim \| f \|_{ L_2(I,B^{s}_{q,q}(\Omega) )}.
\]
\end{lem}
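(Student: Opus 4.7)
The plan is to exploit the explicit formula for the $L_2$-projection in terms of an orthonormal basis of $\Pi^r$ with respect to $L_2(I)$, and then estimate each coefficient separately via Minkowski's integral inequality. Concretely, I would fix an orthonormal basis $\{W_j\}_{j=1}^r$ of $\Pi^r$ in $L_2(I)$ (so that $\int_I W_j W_k = \delta_{jk}$). The projection $G$ then admits the representation
\[
G(t) = \sum_{j=1}^r G^j\, W_j(t), \qquad G^j = \int_I f(\tau)\, W_j(\tau)\, d\tau,
\]
where a priori each $G^j$ is a Bochner integral in $X = L_2(\Omega)$.

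The first step is to show that each $G^j$ lies in $B^s_{q,q}(\Omega)$, and that the representation may be viewed in the stronger target space. The hypothesis $s_2 > n(1/q_2-1/2)_+$ gives the embedding $B^s_{q,q}(\Omega) \hookrightarrow L_2(\Omega)$ via Proposition~\ref{prop-emb-B}(ii), and since $f \in L_2(I, B^s_{q,q}(\Omega))$ and $W_j \in L_\infty(I) \subset L_2(I)$, Cauchy--Schwarz yields $f\cdot W_j \in L_1(I, B^s_{q,q}(\Omega))$; hence the Bochner integral is well defined in $B^s_{q,q}(\Omega)$ and agrees with the one in $X$. The second step is to apply Minkowski's integral inequality in the Banach space $B^s_{q,q}(\Omega)$, followed by Cauchy--Schwarz in $L_2(I)$:
\[
\|G^j\|_{B^s_{q,q}(\Omega)} \le \int_I |W_j(\tau)|\, \|f(\tau)\|_{B^s_{q,q}(\Omega)}\, d\tau \le \|W_j\|_{L_2(I)}\, \|f\|_{L_2(I, B^s_{q,q}(\Omega))} = \|f\|_{L_2(I, B^s_{q,q}(\Omega))}.
\]

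The final step is to reassemble the bound on $G$ itself. Using the triangle inequality and Cauchy--Schwarz pointwise in $t$,
\[
\|G(t)\|_{B^s_{q,q}(\Omega)}^2 \le \Bigl(\sum_{j=1}^r |W_j(t)|\,\|G^j\|_{B^s_{q,q}(\Omega)}\Bigr)^{\!2} \le r \sum_{j=1}^r |W_j(t)|^2\,\|G^j\|_{B^s_{q,q}(\Omega)}^2,
\]
so that integrating over $I$ and invoking orthonormality yields
\[
\|G\|_{L_2(I,B^s_{q,q}(\Omega))}^2 \le r \sum_{j=1}^r \|G^j\|_{B^s_{q,q}(\Omega)}^2 \le r^2\,\|f\|_{L_2(I, B^s_{q,q}(\Omega))}^2,
\]
which is the desired estimate.

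The only delicate point is the application of Minkowski's integral inequality in the middle step: this requires the triangle inequality in $B^s_{q,q}(\Omega)$, which holds precisely when $q = q_2 \ge 1$. For $q_2 < 1$ one would only have a quasi-triangle inequality and the corresponding integral version of Minkowski fails, which explains the restriction $q_2 \ge 1$ in the hypothesis and accounts for the obstruction mentioned in the remark following Theorem~\ref{space-time-poly}.
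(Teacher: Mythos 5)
Your proof is correct and takes essentially the same route as the paper: represent $G$ via an $L_2(I)$-orthonormal basis of $\Pi^r$, bound each coefficient $G^j$ by Minkowski's integral inequality plus Cauchy--Schwarz, and reassemble. The only difference is one of packaging: the paper expands $\| G^j \|_{B^s_{q,q}(\Omega)}$ into its $L_q(\Omega)$-part and its seminorm part (unwinding the latter through the averaged modulus $w_r$) and applies Minkowski to each piece separately, whereas you invoke the Bochner-integral triangle inequality once, directly in the Banach space $B^s_{q,q}(\Omega)$. This is cleaner and equally valid, provided one checks — as you do — that $B^s_{q,q}(\Omega)\hookrightarrow L_2(\Omega)$ under the hypothesis $s_2 > n(1/q_2-1/2)_+$ (so the Bochner integral taken in $B^s_{q,q}(\Omega)$ agrees with the one in $X=L_2(\Omega)$), and that $B^s_{q,q}(\Omega)$ is genuinely a normed space, which is where $q_2\ge 1$ enters in both arguments. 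Your closing remark correctly identifies that the failure of the triangle inequality for $q<1$ is the obstruction that the paper's remark after Theorem~\ref{space-time-poly} alludes to.
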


\begin{proof}
If $\{\onb{}{j}\}_{j=1}^{r}$ is an orthonormal basis of $\V_{I,\R}^r$ then
\[
G(t) = \sum_{j=1}^{r} G^j \onb{}{j}(t)
\quad\text{with}\quad
G^j = \int_I f(t) \onb{}{j}(t) \ud t,
\]
i.e., 
\[
G(t)(x) = \sum_{j=1}^{r} G^j(x) \onb{}{j}(t)
\quad\text{with}\quad
G^j(x) = \int_I f(t,x) \onb{}{j}(t) \ud t,
\]
for almost every $x \in \Omega$.
Notice first that
    \begin{align*}
    \| G \|_{L_2(I,B^s_{q,q}(\Omega))}^2
        &= \int_I \| G(t) \|_{L_q(\Omega)}^2 + | G |_{B_{q,q}^s(\Omega)}^2 \ud t
        \nonumber\\
        &=  \int_I \Big\| \sum_{j=1}^r G^j \onb{}{j}(t) \Big\|_{L_q(\Omega)}^2 + \Big|  \sum_{j=1}^r G^j \onb{}{j}(t) \Big|_{B_{q,q}^s(\Omega)}^2 \ud t
        \nonumber\\
        &\lesssim \sum_{j=1}^r \| G^j \|_{L_q(\Omega)}^2+  |G^j|_{B_{q,q}^s(\Omega)}^2 ,
\end{align*}
so that
\begin{equation}\label{eq:G_bound2}
   \| G \|_{L_2(I,B^s_{q,q}(\Omega))} \lesssim \sum_{j=1}^r \| G^j \|_{L_q(\Omega)}+  |G^j|_{B_{q,q}^s(\Omega)}.
   \end{equation}

We now bound $\| G^j \|_{L_q(\Omega)}$ and $ |G^j|_{B_{q,q}^s(\Omega)} $ and focus on  the case $1\le q < \infty$, noting that the case $q=\infty$ is analogous. 
Since $q \ge 1$, by Minkowski's inequality, for any $j=1,2,\dots, r$ 
we have 
\begin{align*}
\| G^j\|_{ L_q(\Omega)}
    & = \left(\int_\Omega \left|\int_I f(x,t)W^j(t)\ud t \right|^q \ud x\right)^{1/q}\nonumber\\
    &\le \int_I \left(\int_\Omega  \left|f(x,t)W^j(t) \right|^q \ud x \right)^{1/q}\ud t \nonumber\\
    &= \int_I \left|W^j(t)\right| \left\|f(\cdot,t)\right\|_{L_q(\Omega)} \ud t \nonumber\\
    &\le \left\| \onb{}{j}(t) \right\|_{L_2(I)}  \left\| \left\|f(\cdot,t)\right\|_{L_q(\Omega)}   \right\|_{L_2(I)}
    \end{align*}
    so that
    \begin{equation}\label{eq:G_bound0}
\| G^j\|_{ L_q(\Omega)} \lesssim  \|f\|_{L_2(I,L_q(\Omega))}, \quad j=1,2,\dots, r. 
    \end{equation}

We now  deal with    $\left|G^j\right|_{B^s_{q,q}(\Omega)} $.  
Observe that for any $j$ we have
\begin{align}
\left|G^j\right|_{B^s_{q,q}(\Omega)}
&\lesssim \left(\int_0^1 \left[u^{-s}w_r(G^j,I,u)_q\right]^q \frac{\ud u}{u}\right)^{1/q} 
= \left(\int_0^1 u^{-sq}w_r(G^j,I,u)_q^q \frac{\ud u}{u}\right)^{1/q} \nonumber
\\
&= \left(\int_0^1 u^{-sq} \frac{1}{(2u)^{n}} \int_{|h|\le u} \left\|\Delta_h^rG^j\right\|^q_{L_q(\Omega_{ rh })} \ud h \frac{\ud u}{u}\right)^{1/q}\nonumber
\\
&=
\left(\int_0^1 u^{-sq}\frac{1}{(2u)^{n}} \int_{|h|\le u}
\int_{\Omega_{ rh }}\left|\int_I \Delta_h^rf(t,x) \onb{}{j}(t) \ud  t\right|^q\ud x
\ud h  \frac{\ud u}{u}\right)^{1/q}\nonumber
\\
&=
\left(\int_0^1 \int_{|h|\le u}  \int_{\Omega_{ rh }}
u^{-sq}\frac{1}{(2u)^{n}} \left|\int_I \Delta_h^rf(t,x) \onb{}{j}(t) \ud  t\right|^q\ud x
\ud h  \frac{\ud u}{u}\right)^{1/q}. \nonumber
\end{align}

Again, by Minkowski's inequality
\begin{align*}
\left|G^j\right|_{B^s_{q,q}(\Omega)}
&\le
\int_I \left(\int_0^1 \int_{|h|\le u}  \int_{\Omega_{ rh }}
u^{-sq}\frac{1}{(2u)^{n}} \left| \Delta_h^rf(t,x) \onb{}{j}(t)\right|^q\ud x
\ud h  \frac{\ud u}{u}\right)^{1/q}  \ud  t \nonumber
\\
&=
\int_I  \left| \onb{}{j}(t) \right| \left(\int_0^1  u^{-sq} \frac{1}{(2u)^{n}} \int_{|h|\le u}
 \int_{\Omega_{rh }} \left|\Delta_h^rf(t,x)\right|^q \ud x
\ud h  \frac{\ud u}{u}\right)^{1/q} \ud t\nonumber
\\
&=
\int_I \left|\onb{}{j}(t)\right| \left|f(t)\right|_{B^s_{q,q}(\Omega)}   \ud t
\leq
\left\|\onb{}{j}\right\|_{L_2(I)}\left\|f(t)\right\|_{L_2(I,B^s_{q,q}(\Omega))}
\end{align*}
whence
\begin{equation}
\label{eq:G_bound1}
 \left|G^j\right|_{B^s_{q,q}(\Omega)}
 \lesssim
 \left\|f\right\|_{L_2(I,B^s_{q,q}(\Omega))}. 
\end{equation}

Therefore from \eqref{eq:G_bound0} \eqref{eq:G_bound1} and \eqref{eq:G_bound2} we get
\[
\| G \|_{ L_2(I,B^{s}_{q,q}(\Omega) )} \lesssim \| f \|_{ L_2(I,B^{s}_{q,q}(\Omega) )} 
\qquad\text{if \ $1 \le q < \infty$},
\]
and analogously for $q = \infty$.
\end{proof}

If we use the same polynomial degree in space and time in Theorem \ref{space-time-poly}  the result reads  as follows.

\begin{cor}[{\bf Fully discrete with same polynomial degree}]\label{space-time-poly-2}
Let $1\le q \le \infty$ and $n\Big(\frac1q-\frac12\Big)_+ < s < r \in \N$. If $f\in B^{s}_{q,q}([0,T),X) \cap L_2([0,T),B_{q,q}^{s}(\Omega))$ with $X = L_2(\Omega)$,
then for each $\varepsilon > 0$ there exists a time-space partition $\calP$ that satisfies
\[
\# \mathcal{P}\le c_1 \varepsilon^{-\frac{n+1}{s}}
\]
and a function $F \in \overline{\V}_{\mathcal{P}}^{r,r}$ such that
\begin{align*}
\| f - F \|_{ L_2([0,T),X)}
\le c_2 \varepsilon
|\!|\!| f |\!|\!|
\le c_3 (\#\mathcal{P})^{-\frac{s}{n+1}} |\!|\!| f |\!|\!|,
\end{align*}
where $|\!|\!| f |\!|\!| = | f |_{B^{s}_{q,q}([0,T),X)} + \| f \|_{L_2([0,T),B_{q,q}^{s}(\Omega))}$ and the positive constants $c_1,c_2,c_3$ depend on $q$ and $s$ but not on $f$.
%
\end{cor}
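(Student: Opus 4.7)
The plan is to obtain Corollary \ref{space-time-poly-2} as an immediate specialization of Theorem \ref{space-time-poly}. I would simply apply that theorem with the choice $r_1=r_2=r$, $s_1=s_2=s$, and $q_1=q_2=q$, and verify that all required hypotheses translate into those of the corollary.

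First I would check the parameter conditions. The assumption $0<s<r$ gives $0<s_i<r_i$ for $i=1,2$. The range $1\le q\le\infty$ covers both $0<q_1\le\infty$ and $1\le q_2\le\infty$. The hypothesis $s>n(1/q-1/2)_+$ provides directly the second condition $s_2>n(1/q_2-1/2)_+$, and since $n\ge 1$ implies $(1/q-1/2)_+ \le n(1/q-1/2)_+ < s$, the condition $s_1>(1/q_1-1/2)_+$ is also satisfied.

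Next, an elementary computation of the indices gives
\[
\frac{1}{s_1}+\frac{n}{s_2} \;=\; \frac{1}{s}+\frac{n}{s} \;=\; \frac{n+1}{s},
\]
so that the partition-size bound of Theorem \ref{space-time-poly} becomes $\#\mathcal{P}\le c_1\varepsilon^{-(n+1)/s}$, and the corresponding approximation rate
\[
\frac{1}{1/s_1+n/s_2} \;=\; \frac{s}{n+1}
\]
yields the claimed error estimate with factor $(\#\mathcal{P})^{-s/(n+1)}$. The seminorm $|\!|\!| f |\!|\!|$ in Theorem \ref{space-time-poly} reduces, under these parameter choices, exactly to $|f|_{B^{s}_{q,q}([0,T),X)} + \|f\|_{L_2([0,T),B_{q,q}^{s}(\Omega))}$, completing the argument.

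I do not expect any substantive obstacle here: the corollary is a straightforward rewriting of the main theorem in the symmetric case $s_1=s_2$, $q_1=q_2$, $r_1=r_2$. The only real content is the index book-keeping above and the observation that the single hypothesis $s>n(1/q-1/2)_+$ is strong enough to imply both regularity assumptions of Theorem \ref{space-time-poly}.
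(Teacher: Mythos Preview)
Your proposal is correct and matches the paper's approach exactly: the corollary is stated as an immediate specialization of Theorem~\ref{space-time-poly} with $r_1=r_2=r$, $s_1=s_2=s$, $q_1=q_2=q$, and the paper does not provide any further argument beyond this. Your verification of the parameter conditions and the index computation $\frac{1}{s_1}+\frac{n}{s_2}=\frac{n+1}{s}$ is precisely the bookkeeping that is implicitly left to the reader.
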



\subsection{Comparison with space-time finite elements}

If we were to use space-time finite elements of order $r$ in $\real^{n+1}$, in order to obtain the same rate $(\#\mathcal{P})^{-\frac{s}{n+1}}$ as that indicated in Corollary~\ref{space-time-poly-2}, Corollary~\ref{cor:spaceapprox} tells us that the function $f$ should belong to $B^s_{q,q}([0,T)\times\Omega)$
with $0<s  <   r$ and  $0<\frac1q < \frac{s}{n+1}+\frac12$.
This raises the following question:
\begin{quote}
What is the relation between the spaces
\[
B^s_{q,q}([0,T)\times\Omega)\quad\text{and}\quad
B^{s}_{q_1,q_1}([0,T),L_2(\Omega)) \cap L_2([0,T),B_{q_2,q_2}^{s}(\Omega))
\]
for the respective ranges of the parameters $q_1,q_2,$ and $q$?
\end{quote}

The following proposition provides a first attempt to  give an answer to this question.

\begin{prop}
Let $0<s<r$ and $0<q_1,q_2,q < \infty$,  where we additionally require that
\beq \label{range_q}
\frac1q < \frac{s}{n+1}+\frac12, \qquad \frac{1}{q_1} < {s}+\frac12, \qquad \text{and}\qquad  \frac{1}{q_2} < \frac{s}{n}+\frac12.
\eeq
Then we have
\beq
\bigcup_{q_1,q_2}B^{s}_{q_1,q_1}([0,T),L_2(\Omega))\cap L_2([0,T),B_{q_2,q_2}^{s}(\Omega))\not\subset \bigcup_{q}B^s_{q,q}([0,T)\times \Omega),
\eeq
where the union is taken over all $q,q_1,q_2$ according to \eqref{range_q}.
\end{prop}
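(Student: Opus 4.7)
The plan is to exhibit an $x$-independent function $f(t,x)=g(t)$, for a suitably chosen scalar $g:[0,T)\to\R$, as a counterexample. This factorised form reduces the problem to one variable: since constants on $\Omega$ have vanishing Besov seminorm, one has $\|f(t,\cdot)\|_{L_2(\Omega)}=|\Omega|^{1/2}|g(t)|$ and $\|f(t,\cdot)\|_{B^s_{q_2,q_2}(\Omega)}=|\Omega|^{1/q_2}|g(t)|$, so that membership on the LHS amounts to $g\in B^s_{q_1,q_1}([0,T))$ together with $g\in L_2([0,T))$, the latter being automatic for any admissible $q_2$. On the RHS, the $r$-th difference in $\R^{n+1}$ factorises as $\Delta^r_{(h_0,h')}f(t,x)=\Delta^r_{h_0}g(t)$, which after integration in $x\in\Omega_{rh'}$ gives $\omega_r(f,[0,T)\times\Omega,u)_q=|\Omega|^{1/q}\omega_r(g,[0,T),u)_q$; hence $f\in B^s_{q,q}([0,T)\times\Omega)$ if and only if $g\in B^s_{q,q}([0,T))$.

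It thus suffices to choose a one-variable $g$ whose admissible diagonal Besov indices differ on the two sides. A natural choice is $g(t)=t^\alpha$ on $[0,T)$, for which a classical estimate---obtained by splitting the $r$-th difference $\Delta^r_h(t^\alpha)$ according to whether $t\le rh$ or $t>rh$---gives $\omega_r(t^\alpha,u)_p \simeq u^{\min(r,\,\alpha+1/p)}$, whence the equivalence
\[
t^\alpha \in B^s_{p,p}([0,T)) \iff s<\alpha+\tfrac1p \iff p<\tfrac{1}{s-\alpha},
\]
valid whenever $\alpha$ is not a non-negative integer. Set $\alpha=\frac{sn}{n+1}-\frac12$ (or slightly less if this happens to be a non-negative integer); then $\frac{1}{s-\alpha}=\frac{2(n+1)}{2s+n+1}$, and $\alpha>-\frac12$ (equivalent to $sn>0$) guarantees $g\in L_2([0,T))$.

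For the LHS, the admissibility constraint $1/q_1<s+1/2$ reads $q_1>\frac{2}{2s+1}$, and the inequality $\frac{2}{2s+1}<\frac{2(n+1)}{2s+n+1}$ (which again reduces to $2sn>0$) ensures that the open interval $\big(\frac{2}{2s+1},\frac{2(n+1)}{2s+n+1}\big)$ is non-empty; any $q_1$ therein satisfies both the admissibility and $q_1<\frac{1}{s-\alpha}$, hence $g\in B^s_{q_1,q_1}([0,T))$. For the RHS, every admissible $q$ satisfies $q>\frac{2(n+1)}{2s+n+1}\ge\frac{1}{s-\alpha}$, which violates the condition $p<\frac{1}{s-\alpha}$ for $t^\alpha\in B^s_{p,p}$; hence $g\notin B^s_{q,q}([0,T))$ and therefore $f\notin B^s_{q,q}([0,T)\times\Omega)$, which proves the claimed non-containment.

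The main technical ingredient is the sharp characterisation of the Besov regularity of the power function $t^\alpha$, i.e.\ the estimate $\omega_r(t^\alpha,u)_p\simeq u^{\alpha+1/p}$ in the regime $r>\alpha+1/p$; this is classical but requires a careful splitting of the $r$-th difference near and far from the singularity at $t=0$. Everything else is elementary bookkeeping on exponents and a direct unravelling of the definitions of the three norms involved.
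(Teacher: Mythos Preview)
Your proof is correct and follows the same high-level strategy as the paper: reduce to an $x$-independent function $f(t,x)=g(t)$, so that the question collapses to a one-dimensional Besov non-embedding. The paper then simply fixes any $q_1$ with $\frac{s}{n+1}+\frac12 < \frac1{q_1} < s+\frac12$ and cites~[HS13, Cor.~3.7] for the non-embedding $B^s_{q_1,q_1}([0,T)) \not\hookrightarrow B^s_{q,q}([0,T))$ when $q_1<q$, without exhibiting a concrete function. Your argument is more explicit and self-contained: you produce the specific witness $g(t)=t^\alpha$ with $\alpha=\frac{sn}{n+1}-\frac12$, verify its Besov regularity via the sharp modulus estimate $\omega_r(t^\alpha,u)_p\simeq u^{\min(r,\alpha+1/p)}$, and tune $\alpha$ so that $\frac1{s-\alpha}$ lands exactly at the threshold $\big(\frac{s}{n+1}+\frac12\big)^{-1}$ separating the admissible ranges for $q_1$ and $q$. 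This has the advantage of giving a \emph{single} function that simultaneously lies outside $B^s_{q,q}$ for \emph{every} admissible $q$, whereas the paper's citation-based argument a priori only gives, for each fixed $q$, some function not in that particular space---an issue the paper does not explicitly address. Your reduction $\omega_r(f,[0,T)\times\Omega,u)_q=|\Omega|^{1/q}\omega_r(g,[0,T),u)_q$ is indeed an equality (the supremum over $h=(h_0,h')$ is attained at $h'=0$, since $\Omega_0=\Omega$ and $|\Omega_{rh'}|\le|\Omega|$ otherwise), so no approximation is needed there.
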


\begin{proof}
We show that we can find functions belonging to $ \bigcup_{q_1,q_2}B^{s}_{q_1,q_1}([0,T),L_2(\Omega))\cap L_2([0,T),B_{q_2,q_2}^{s}(\Omega))$ which are not in $\bigcup_{q}B^s_{q,q}([0,T)\times \Omega)$.
For this let us choose $q_1$ such that
$$
\frac 1q< \frac{s}{n+1}+\frac 12<\frac{1}{q_1}< s + \frac 12
$$
and consider a function $f$ which is constant with respect to the space variable $x$ and belongs to $B^s_{q_1,q_1}([0,T))$. Clearly, by our assumptions this function is also in $L_2([0,T))$. Moreover,  by or choice of $q_1$ we see from \cite[Cor.~3.7]{HS13} that
\[
B^s_{q_1,q_1}([0,T),L_2(\Omega))\not\hookrightarrow B^{s}_{q,q}([0,T)\times\Omega),
\]
since  $q_1<q$, which proves the claim. Alternatively,
we could choose  $q_2$ such that
$$
\frac 1q< \frac{s}{n+1}+\frac 12<\frac{1}{q_2}< \frac sn + \frac 12
$$
and consider a function $f$ which is constant with respect to the time variable $t$ and belongs to $B^s_{q_2,q_2}(\Omega)$. Then, clearly this function also belongs to $L_2(\Omega)$.  By our choice of $q_2$ it again follows  from \cite[Cor.~3.7]{HS13} that
\[
B^s_{q_2,q_2}(\Omega)\not\hookrightarrow B^{s}_{q,q}(\Omega),
\]
since  $q_2<q$. This completes the proof.
\end{proof}

\begin{rem} We believe that for the above spaces under consideration we actually have the following inclusion
\[
\bigcup_{q}B^s_{q,q}([0,T)\times \Omega)\subsetneq \bigcup_{q_1,q_2}B^{s}_{q_1,q_1}([0,T),L_2(\Omega))\cap L_2([0,T),B_{q_2,q_2}^{s}(\Omega)),
\]
where $q, q_1$, and $q_2$ are chosen according to \eqref{range_q}. This can be interpreted in the sense that the respective solution spaces for time-stepping algorithms yielding the approximation class $\mathbb{A}_{\frac{s}{n+1}}(L_2([0,T)\times \Omega))$ are in fact larger than the corresponding solution spaces for space-time finite elements. \\
In this context the fact that $B^s_{q,q}([0,T)\times \Omega)\subset L_2([0,T), B^s_{q_2,q_2}(\Omega))$ should be easier to handle. However, these matters are quite technical and, therefore,  this interesting question will be tackled in  a future paper.
\end{rem}

\bibliographystyle{alpha}

 \def\cprime{$'$}

\vfill

{
\bigskip
\smallskip
\vbox{\noindent  Marcelo Actis, Universidad Nacional del Litoral and CONICET, Departamento de Matem\'a{}tica, Faculdad de Ingenier\'i{}a Qu\'i{}mica, S3000AOM Santa Fe, Argentina\\
Phone: (+54) 342 457 1164 \\ 
E-mail: {\tt mactis@fiq.unl.edu.ar}
}
}

{
\bigskip
\smallskip
\vbox{\noindent  Pedro Morin, Universidad Nacional del Litoral and CONICET, Departamento de Matem\'a{}tica, Faculdad de Ingenier\'i{}a Qu\'i{}mica, S3000AOM Santa Fe, Argentina\\
Phone: (+54) 342 457 1164 \\ 
E-mail: {\tt pmorin@fiq.unl.edu.ar}
}
}

{
\bigskip
\smallskip
\vbox{\noindent  Cornelia Schneider,
Friedrich-Alexander-Universit\"at Erlangen-N\"urnberg,
Lehrstuhl AM3,
Cauerstr. 11,
91058 Erlangen, Germany\\
Phone: (+49) 9131 85-67207, Fax: (+49) 9131 85-67201  \\ 
E-mail: {\tt schneider@math.fau.de}
}

\end{document}